\def \rr {\mathbb{R}}
\def \nn {\mathbb{N}}
\def \rn {\rr^n}
\def \huno {H^1_0(\Omega)}
\def \dundeux {D^{1,2}(\Omega)}
\def \crit {2^{\star}}
\def \ue {u_\epsilon}
\def \eps {\epsilon}
\def \crits {2^\star(s)}
\newtheorem{theorem}{Theorem}[section]
\newtheorem{rem}[theorem]{Remark}
\newtheorem{defi}{Definition}
\newtheorem{proposition}[theorem]{Proposition}
\newtheorem{coro}[theorem]{Corollary}
\def \rr {\mathbb{R}}
\def \nn {\mathbb{N}}
\def \rn {\mathbb{R}^n}
\def \rnp {\mathbb{R}^n_+}
\def \ue {u_\eps}
\def \eps {\varepsilon}
\def \ap {\alpha_+(\gamma)}
\def \am {\alpha_-(\gamma)}
\def \bp {\beta_+(\gamma)}
\def \bm {\beta_-(\gamma)}
\def \aps {\alpha_+}
\def \ams {\alpha_-}
\def \dundeuxr {D^{1,2}(\rn)}
\def \dundeuxrnp {D^{1,2}(\rnp)}
\def \dundeux {D^{1,2}(\Omega)}
\def \eucl {\hbox{Eucl}}
\def \cone {{\mathcal C}}
\def \dundeuxc {D^{1,2}(\mathcal C)}
\title[Hardy-Sobolev inequalities]{Sobolev inequalities for the Hardy-Schr\"odinger operator: Extremals and critical dimensions}
\author{Nassif Ghoussoub}
\address{Nassif Ghoussoub, Department of Mathematics, 1984 Mathematics Road,The University of British Columbia, BC, Canada V6T 1Z2}
\email{nassif@math.ubc.ca}
\author{Fr\'ed\'eric Robert}
\address{Fr\'ed\'eric Robert, Institut \'Elie Cartan, Universit\'e de Lorraine, BP 70239, F-54506 Vand{\oe}uvre-l\`es-Nancy, France}
\email{frederic.robert@univ-lorraine.fr}
\thanks{This work was carried out while N. Ghoussoub was visiting l'Institut \'Elie Cartan, Universit\'e de Lorraine. He  was partially supported by a research grant from the Natural Science and Engineering Research Council of Canada (NSERC)}
\thanks{2010 Mathematics Subject Classification: 35J35, 35J60, 58J05, 35B44.}
\begin{document}
\begin{abstract} In this expository paper, we consider the Hardy-Schr\"odinger operator $L_\gamma:=-\Delta -\frac{\gamma}{|x|^2}$ on a smooth domain $\Omega$ of $\rn$ with $0\in \overline \Omega$, and describe how the location of the singularity $0$, be it in the interior of $\Omega$ or on its boundary, affects its analytical properties. We compare the two settings by considering the optimal Hardy, Sobolev, and the Caffarelli-Kohn-Nirenberg  inequalities. The latter can be stated as:  \[
\hbox{$C\left(\int_{\Omega}\frac{u^{2^*(s)}}{|x|^s}dx\right)^{\frac{2}{2^*(s)}}\leq \int_{\Omega} |\nabla u|^2dx-\gamma \int_{\Omega}\frac{u^2}{|x|^2}dx$\quad  for all $u\in H^1_0(\Omega)$,}
\]
where $\gamma <\frac{n^2}{4}$, $s\in [0,2)$ and $\crits:=\frac{2(n-s)}{n-2}$.
We address questions regarding the explicit values of the optimal constant $C:=\mu_{\gamma, s}(\Omega)$, as well as the existence of non-trivial extremals attached to these inequalities. Scale invariance properties often lead to situations where the best constants $\mu_{\gamma, s}(\Omega)$ do not depend on the domain, and hence they are not attainable. We consider two different approaches for ``breaking the homogeneity" of the problem, and restoring compactness. 

 One approach was initiated by Brezis-Nirenberg, when $\gamma=0$ and $s=0$, and by Janelli, when $\gamma>0$ and $s=0$. It is suitable for the case where the singularity $0$ is in the interior of $\Omega$, and consists of considering lower order perturbations of the critical nonlinearity. The other approach was initiated by Ghoussoub-Kang  for $\gamma=0$, $s>0$,  and by C.S. Lin et al. and Ghoussoub-Robert, when $\gamma \neq 0, s\geq 0$. It consists of considering domains, where the singularity $0$ is on the boundary.  

Both of these approaches are rich in structure and in challenging problems. If $0\in \Omega$, then a negative linear perturbation suffices for higher dimensions, while a positive {\it ``Hardy-singular interior mass"} theorem for the operator $L_\gamma$
is required in lower dimensions. If the singularity $0$ belongs to the boundary $\partial \Omega$, then the local geometry around $0$ (convexity and mean curvature) plays a crucial role in high dimensions, while a positive {\it ``Hardy-singular boundary mass"} theorem is needed for the lower dimensions. Each case leads to a distinct notion of critical dimension for the operator $L_\gamma$.
  
\end{abstract}

\maketitle
\newpage 
\tableofcontents
\part{Introduction and overview}
Given a domain $\Omega$ in $\rn$ ($n\geq 3$), we discuss issues of existence of extremals for the following general Sobolev inequality associated with the Hardy-Schr\"odinger operator $L_\gamma=-\Delta -\frac{\gamma}{|x|^2}$, where $\gamma \in \rr$, $s\in [0, 2]$, and $\crits:=\frac{2(n-s)}{n-2}$.
\begin{equation}\label{general}
\hbox{$C\left(\int_{\Omega}\frac{u^{\crits}}{|x|^s}dx\right)^{\frac{2}{\crits}}\leq \int_{\Omega} |\nabla u|^2dx-\gamma \int_{\Omega}\frac{u^2}{|x|^2}dx$\quad  for all $u\in D^{1,2}(\Omega)$,}
\end{equation}
where $D^{1,2}(\Omega)$ is the completion of $C^{\infty}_0(\Omega)$ for the norm  $\|u\|^2=\int_{\Omega}|\nabla u|^2 dx.$  If $\Omega$ is bounded we shall sometimes write $H^1_0(\Omega)$ instead of $D^{1,2}(\Omega)$.\\
Note that when $s=2$ and $\gamma=0$, this is the celebrated Hardy inequality. If $s=0$ and $\gamma=0$, it is the Sobolev inequality, while in their full generalities, i.e., when $s\in [0, 2]$ and $\gamma \in (-\infty , \frac{(n-2)^2}{4})$, they contain -- after a suitable change of functions -- %
the Caffarelli-Kohn-Nirenberg inequalities \cite{ckn}. The latter state that there is a constant $C:=C(a,b,n)>0$ such that,  
\begin{equation} \label{CKN}
\hbox{$\left(\int_{\rn}|x|^{-bq}|u|^q \right)^{\frac{2}{q}}\leq C\int_{\rn}|x|^{-2a}|\nabla u|^2 dx $ \quad for all $u\in C^\infty_0(\rn)$,}
\end{equation}
where
\begin{equation}\label{cond1}
-\infty<a<\frac{n-2}{2}, \ \ 0 \leq b-a\leq 1, \ \ {\rm and}\ \ q=\frac{2n}{n-2+2(b-a)}.
\end{equation}
We shall survey here the state of the art regarding the associated best constants, namely
\begin{equation} 
\mu_{\gamma,s}(\Omega):=\inf\left\{J^\Omega_{\gamma, s}(u); u\in \dundeux\setminus \{0\}\right\},
\end{equation}
where
\begin{equation} 
J^\Omega_{\gamma, s}(u):=\frac{\int_{\Omega} |\nabla u|^2-\gamma \int_{\Omega}\frac{u^2}{|x|^2}dx}{(\int_{\Omega}\frac{u^{2^*(s)}}{|x|^s}dx)^{\frac{2}{2^*(s)}}}.
\end{equation}
We consider the following questions: 
\begin{itemize}
\item How do the best constants $\mu_{\gamma, s}(\Omega)$ depend on $\Omega$, and when one can evaluate their explicit values? 
\item What geometric/topological, local/global conditions on the domain $\Omega$ guarantee the existence (or non-existence) of extremals for $\mu_{\gamma, s}(\Omega)$, that is a function  $u_\Omega$ in $\huno$ such that $J^\Omega_{\gamma, s}(u_{\Omega})=\mu_{\gamma, s}(\Omega)$? 
\item What is the role of the dimension of the ambiant space?
\end{itemize}
Note that such an extremal -- in the case where $\mu_{\gamma, s}(\Omega)>0$ -- would yield a  solution for the corresponding Euler-Lagrange equations,
\begin{eqnarray} \label{one}
\left\{ \begin{array}{llll}
-\Delta u-\gamma \frac{u}{|x|^2}&=&\frac{u^{2^*(s)-1}}{|x|^s} \ \ &\text{on } \Omega\\
\hfill u&>&0 &\text{on } \Omega\\
\hfill u&=&0 &\text{on }\partial \Omega.
\end{array} \right.
\end{eqnarray}
Elliptic problems with singular potential arise in quantum mechanics, astrophysics, as well as in Riemannian geometry, in particular  
in the study of the scalar curvature problem on the sphere $S^n$. Indeed, if the latter is equipped with its standard metric whose scalar curvature is singular at the north and south poles, then by considering its stereographic projection of $\rn$, the problem of finding a conformal metric with prescribed scalar curvature $K(x)$ leads to finding solutions of the form $-\Delta u-\gamma \frac{u}{|x|^2}=K(x)u^{2^*-1}$ on $\rn$. The latter is a simplified version of the nonlinear Wheeler-DeWitt equation, which appears in quantum cosmology
(see \cites{BB,BE,LZ,SmetsTAMS} and the references cited therein).

We shall always assume throughout this paper that $0\in\overline{\Omega}$. The case when the singularity $0\not\in\overline{\Omega}$ is not interesting for $s>0$.
 Indeed, in this case $L^{\crits}(\Omega, |x|^{-s})=L^{\crits}(\Omega)$ and the embedding $\huno\hookrightarrow L^{\crits }(\Omega)$ is compact since $1\leq \crit (s)<\frac{2n}{n-2}$. Therefore, the standard minimization methods work and there are extremals for $\mu_{\gamma,s}(\Omega)$. However, finding the explicit value of $\mu_{\gamma,s}(\Omega)$ is almost impossible in general.

\medskip Assuming now that $0\in\overline{\Omega}$, the first difficulty in these problems is due to the fact that  $\crits $ is critical from the viewpoint of the Sobolev embeddings, in such a way that if $\Omega$ is bounded, then $\huno$ is embedded in the weighted space $L^p(\Omega, |x|^{-s})$ for $1\leq p\leq\crits $, and the embedding is compact if and only if  $p<\crits$.
This lack of compactness defeats the classical minimization strategy to get extremals for $\mu_{\gamma, s}(\Omega)$. In fact, when $s=0$ and $\gamma=0$, this is the setting of the critical case in the classical Sobolev inequalities, which started this whole line of inquiry, due to its connection with the Yamabe problem on compact Riemannian manifolds \cite{aubin}, \cite{LeeParker}. 

\medskip Another complicating feature of the problem is that the  terms $\frac{u}{|x|^2}$ and $\frac{u^{2^*(s)-1}}{|x|^s}$ are critical, in the sense that they have the same homogeneity as the Laplacian. Moreover, the Hardy potential does not belong to the Kato class. 
The best constant in the Sobolev inequality on $\rn$ is  
\begin{equation}
\mu_{0,0}(\rn)=\inf\left\{\frac{\int_{\rn} |\nabla u|^2\, dx}{(\int_{\rn} |u|^{2^*})^{2/2^*}}; u\in D^{1,2}(\rn)\setminus \{0\}\right\}, 
\end{equation}
where $2^*=2^*(0)=\frac{2n}{n-2}$. 
It is attained, and has been computed to be 
\begin{equation}
\mu_{0,0}(\rn) 
=\frac{n(n-2)\omega_n^{2/n}}{4},
\end{equation}
where $\omega_n$ is the volume of the standard $n-$sphere of $\rr^{n+1}$.  Actually, a function $u\in D^{1,2}(\rn)\setminus\{0\}$ is an extremal for $\mu_{0,0}(\rn)$ if and only if there exist $x_0\in\rn$, $\lambda\in\rr\setminus\{0\}$ and $\epsilon>0$ such that
\begin{equation}\label{ext:00}
u_{\lambda, x_0}(x)=\lambda \left(\frac{\epsilon}{\epsilon^2+|x-x_0|^2}\right)^{\frac{n-2}{2}}\hbox{ for all }x\in\rn.
\end{equation}
These results are due to Rodemich \cite{rodemich}, Aubin \cite{aubin} and Talenti \cite{Tal}. We also refer to Lieb \cite{Lieb} and Lions \cites{lions1, lions2} for other nice points of view.\par

However, for general open subsets of $\rn$, one can show by translating, scaling and cutting off $u_{\lambda, x_0}$ that
$\mu_{0,0}(\Omega)=\mu_{0,0}(\rn)$
for all $\Omega$ open subset of $\rn$, which means that if there is an extremal for $\mu_{0,0}(\Omega)$, then it is also an extremal for $\mu_{0,0}(\rn)$ and has to be in the form of \eqref{ext:00}, which is impossible if $\Omega$ is bounded.

The above case has no singularities, which only appear when either $\gamma \neq 0$ or $s>0$. 
But even in this case, we get the same phenomenon as soon as the singularity belongs to the interior of the domain, that is $\mu_{\gamma, s}(\Omega)=\mu_{\gamma, s}(\rn)$, which again means that $\mu_{\gamma, s}(\Omega)$ is not attained unless $\Omega$ is essentially equal to $\rn$. 

It is well known that if $0$ is in the interior of $\Omega$, then the best constant in the Hardy inequality, 
$$\gamma_H(\Omega):=\mu_{0, 2}(\Omega)=\inf\left\{\frac{\int_{\Omega}|\nabla u|^2\, dx}{\int_{\Omega}\frac{u^2}{|x|^2}\, dx}; \, u\in D^{1,2}(\Omega)\setminus\{0\}\right\}, $$
does not depend on the domain $\Omega \subset \rn$, is never achieved, and is always equal to 
\begin{equation}
\mu_{0,2}(\Omega)=\mu_{0,2}(\rn)=\frac{(n-2)^2}{4}.
\end{equation}  
Also, if $0<s<2$,  the constant $\mu_{0,s}(\rn)$ is again explicit, and the extremals are also known (see Ghoussoub-Yuan \cite{GY}, Lieb \cite{Lieb}, Catrina-Wang \cite{CW}). More precisely, 
\begin{equation}
\mu_{0,s}(\rn)=(n-2)(n-s)\left(\frac{\omega_{n-1}}{2-s}\cdot\frac{\Gamma^2(\frac{n-s}{2-s})}{\Gamma(\frac{2n-2s}{2-s})}\right)^{\frac{2-s}{n-s}},
\end{equation}
and a function $u\in D^{1,2}(\rn)\setminus\{0\}$ is an extremal for $\mu_{0, s}(\rn)$ if and only if there exists $\lambda\in\rr\setminus\{0\}$ and $\epsilon>0$ such that $u=\lambda\cdot u_\epsilon$, where  
\begin{equation}
u_\epsilon(x):=\left(\frac{\epsilon^{\frac{2-s}{2}}}{\epsilon^{2-s}+|x|^{2-s}}\right)^{\frac{n-2}{2-s}}.
\end{equation}
 Here, it is important to note the following asymptotics for $u_\epsilon$ when $\epsilon\to 0$:
$$\lim_{\epsilon\to 0}u_\epsilon(0)=+\infty\hbox{ and }\lim_{\epsilon\to 0}u_\epsilon(x)=0\hbox{ for all }x\neq 0.$$
In other words, the function $u_\epsilon$ concentrates at $0$ when $\epsilon\to 0$.\par

When dealing with an open subset $\Omega$ of $\rn$, then clearly $\mu_{0,s}(\Omega)\geq\mu_{0,s}(\rn).$ On the other hand, if $0\in\Omega$,
and $\eta\in C^\infty_c(\Omega)$ is such that $\eta(x)\equiv 1$ in a neighborhood of $0$. Then $\eta u_\epsilon\in C^\infty_c(\Omega)$, and $$J^\Omega_{0, s}(\eta u_\epsilon)=\mu_{0,s}(\rn)+o(1)\hbox{ where $\lim_{\epsilon\to 0}o(1)=0$.}$$
 It then follows that  if  $0\in \Omega$, then 
 $$\mu_{0,s}(\Omega)=\mu_{0,s}(\rn),$$
  and again, there is no extremal for $\mu_{0,s}(\Omega)$ unless $\Omega$ is $\rn$ up to a set of capacity $0$. 
 
 The situation remains unchanged even when $\gamma >0$. One can still compute explicitly $\mu_{\gamma, s}(\rn)$. Indeed, if $n\geq 3$, $0\leq s <2$ and $0<\gamma < \frac{(n-2)^2}{4}$, 
the corresponding best constant is then
\begin{equation}
\mu_{\gamma, s}(\rn)=[(n-2)^2-4\gamma]^{\frac{1}{2^*(s)}+\frac{1}{2}} D_s, 
\end{equation}
where
\begin{equation*}
\quad D_s = \left[ \frac{2\pi^{n/2}}{\Gamma (n/2)}\right]^{\frac{2-s}{n-s} }
\left(\frac{2^*(s)}2\right)^{\frac{2}{2^*(s)}}
\left[\frac{\Gamma (\frac{n-s}{2-s})\Gamma(\frac{n+2-2s}{2-s})}
{\Gamma (\frac{2(n-s)}{2-s})}\right]^{\frac{2-s}{n-s} }.
\end{equation*}
See for example Beckner \cite{beck} or Dolbeault et al. \cite{DELT}. 
The extremals for $\mu_{\gamma, s}(\rn)$ are then given for $\eps>0$, by the functions $u_\eps (x)=\eps^{-\frac{(n-2)}{2}}U (\frac{x}{\eps})$, where 
\begin{equation}\label{J}
U(x):=\frac{1}
{\left(|x|^{\frac{(2-s)\beta_-(\gamma)}{n-2}}+|x|^{\frac{(2-s)\beta_+(\gamma)}{n-2}}\right)^{\frac{n-2}{2-s}}}
\qquad \hbox{ for }x\in\rn\setminus\{0\},
\end{equation}
and
\begin{equation}
\beta_{\pm}(\gamma):=\frac{n-2}{2}\pm\sqrt{\frac{(n-2)^2}{4}-\gamma}.
\end{equation}
Keep in mind that the radial function $x\mapsto |x|^{-\beta}$ is a solution of  $(-\Delta -\frac{\gamma}{|x|^2})|x|^{-\beta}=0$ on $\rn\setminus\{0\}$ if and only if $\beta\in \{\beta_-(\gamma),\beta_+(\gamma)\}$. 
Again, if $0\in \Omega$, we have $$
\mu_{\gamma,s}(\Omega)=\mu_{\gamma,s}(\rn),
$$ and as above, there is no extremal for $\mu_{\gamma,s}(\Omega)$ if, for example, $\Omega$ is bounded. 

Now, in order to remedy the lack of compactness in this Euclidean setting, one can consider the subcritial case, by replacing $2^*(s)$ by a power $p$ with $2<p<2^*(s)$. This direction, however, does not present any new idea or difficulty.  In this paper, we shall describe two --more subtle-- approaches for ``breaking the homogeneity" of the problem, and restoring compactness:
\begin{itemize}
\item One was initiated by Brezis-Nirenberg \cite{bn} when $\gamma=0$ and considered by  Ghoussoub-Yuan \cite{GY}, Janelli \cite{Jan}, Kang-Peng \cites{KP1, KP2, KP3} and many others \cites{CHa, CHP, CP} when $\gamma >0$. It consists of considering lower order perturbations of the critical case. 

\item The other approach was initiated by Ghoussoub-Kang \cite{gk} and developed by Ghoussoub-Robert \cites{gr1, gr2, gr3} when $s>0$ and $\gamma=0$, and by C.S. Lin et al. \cites{HLW, LW1, LW2, LW3} and Ghoussoub-Robert  \cite{gr4} when $\gamma \neq 0$. It consists of considering domains, where the singularity $0$ is on the boundary.  
\end{itemize} 
Both of these approaches are rich in structure and in challenging problems. They both invoke the geometry of the domain (locally and globally), and introduce new critical dimensions to the problem. They also differ in many ways.

\section{Linearly perturbed borderline variational problems with an interior singularity}

The linear perturbation approach consists of considering equations of the form
 \begin{eqnarray} \label{four}
\left\{ \begin{array}{llll}
-\Delta u-\gamma \frac{u}{|x|^2}&=&\frac{u^{2^*(s)-1}}{|x|^s} +\lambda |u|^{q-1}u \ \ &\text{on } \Omega\\
\hfill u&=&0 &\text{on }\partial \Omega, 
\end{array} \right.
\end{eqnarray}
where $1\leq q <2^*(s)$ and $\lambda>0$ is small enough. For simplicity, we only discuss
that case for $q=1$. One then considers the quantity
\begin{equation} \label{general.constant}
\mu_{\gamma, s, \lambda}(\Omega):=\inf\left\{\frac{\int_{\Omega} |\nabla u|^2\, dx-\gamma \int_{\Omega}\frac{u^2}{|x|^2}dx-\lambda \int_{\Omega} |u|^2\, dx }{(\int_{\Omega}\frac{u^{2^*(s)}}{|x|^s}dx)^{\frac{2}{2^*(s)}}};\, u\in D^{1,2}(\Omega)\right\},
\end{equation}
and use the fact that compactness is restored as long as 
\begin{equation}
\mu_{\gamma, s, \lambda}(\Omega) <\mu_{\gamma, s}(\rn).
\end{equation} 
This extremely important observation is due to Trudinger \cite{Tru}, when $s=\gamma=\lambda=0$, in the case of Riemannian manifolds, where the geometry plays the crucial role. He was actually trying to salvage Yamabe's  proof of  his own conjecture. This kind of condition
is now standard while dealing with borderline variational problems. See also Aubin \cite{aubin}, Br\'ezis-Nirenberg \cite{bn}. The condition limits the energy level of minimizing sequences, prevents the creation of ``bubbles" and hence insures compactness. We give below an idea of the proof based on Struwe's decomposition of non-convergent minimizing sequences. 

The idea of restoring compactness on Euclidean domains by considering linear perturbations was pioneered by Brezis-Nirenberg \cite{bn}. They studied  the case where $\gamma=0$, $s=0$ and $0<\lambda <\lambda_1(\Omega)$, the latter being the first eigenvalue of the Laplacian on $H^1_0(\Omega)$, that is the equation 
 \begin{eqnarray} \label{BZ}
\left\{ \begin{array}{llll}
-\Delta u- \lambda u&=&|u|^{2^*-1}u \ \ &\text{on } \Omega\\
\hfill u&>&0 &\text{on } \Omega\\
\hfill u&=&0 &\text{on }\partial \Omega. 
\end{array} \right.
\end{eqnarray}
They showed existence of extremals for $n\geq 4$. 
The case $n=3$ is special and involves a ``positive mass" condition introduced by Druet \cite{d2}, and inspired by the work of Shoen \cite{schoen1} on the Yamabe problem.  
The bottom line is that --at least for $\gamma=0$-- the geometry of $\Omega$ need not be taken into account in dimension $n\geq 4$, while in dimension $n=3$, the existence depends heavily on $\Omega$, since the mass condition does. We shall elaborate further on this theme.

The paper of Brezis-Nirenberg \cite{bn}  generated lots of activities. Combined with the contribution of Druet \cite{d2}, it contains most of the ingredients relevant to the case when $0\in \Omega$, including the case when the Laplacian is replaced by the Hardy-Schr\"odinger  operator $L_\gamma$ that we discuss below.

Following Janelli \cite{Jan}, who dealt with the case $s=0$,  many others   \cites{RW, KP1, KP2, KP3, CHa, CHP, CP} showed what amounts to the following. 

\begin{theorem} Assume $\Omega$ is a smooth bounded domain in $\rn$ such that $0\in \Omega$. If $n\geq 4$, $s\geq 0$, 
$0\leq \gamma\leq\frac{(n-2)^2}{4}-1,$ and $0<\lambda <\lambda_1(L_\gamma)$,  
then  $\mu_{\gamma, s, \lambda}(\Omega)$ is attained. 
\end{theorem}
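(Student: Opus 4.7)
The plan is to follow the Brezis--Nirenberg strategy, adapted to the Hardy-Sobolev setting as in Jannelli's work. First I would check that $\mu_{\gamma, s, \lambda}(\Omega) > 0$: the assumption $\lambda < \lambda_1(L_\gamma)$ together with $\gamma < (n-2)^2/4$ ensures that $u \mapsto \int_\Omega |\nabla u|^2 - \gamma \int_\Omega u^2/|x|^2 - \lambda \int_\Omega u^2$ is a norm on $H^1_0(\Omega)$ equivalent to the Dirichlet one, and then the Hardy--Sobolev inequality gives a positive lower bound for $J^\Omega_{\gamma, s, \lambda}$.

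Next comes the compactness step. I would take a minimizing sequence $(u_k)$ for $\mu_{\gamma, s, \lambda}(\Omega)$, normalized so that $\int_\Omega u_k^{2^*(s)}|x|^{-s}\, dx = 1$. It is bounded in $D^{1,2}(\Omega)$, so up to extraction $u_k \rightharpoonup u_\infty$. Using the Struwe--type profile decomposition alluded to in the excerpt, the only way $u_k$ can fail to converge strongly is by concentrating a nontrivial bubble at $0$, and each such bubble carries an $L^{2^*(s)}(|x|^{-s})$-mass $\geq 1$ with associated energy contribution at least $\mu_{\gamma,s}(\rn)$. Since the $L^2$-term is subcritical and disappears in the blow-up, any such bubble would force $\mu_{\gamma, s, \lambda}(\Omega) \geq \mu_{\gamma, s}(\rn)$. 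Hence the whole problem reduces to establishing the \emph{strict} inequality
\begin{equation*}
\mu_{\gamma, s, \lambda}(\Omega) < \mu_{\gamma, s}(\rn).
\end{equation*}

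To prove this strict inequality, I would insert the explicit extremals from the excerpt as test functions. Fix $\eta \in C^\infty_c(\Omega)$ with $\eta \equiv 1$ near $0$, set $u_\eps(x) = \eps^{-(n-2)/2} U(x/\eps)$ with $U$ as in \eqref{J}, and estimate each piece of $J^\Omega_{\gamma, s, \lambda}(\eta u_\eps)$ as $\eps \to 0$. Because $U$ is an extremal, the ratio of the Hardy--Sobolev pieces equals $\mu_{\gamma, s}(\rn) + o(1)$, with the cut-off errors controlled by the tail of $U$, which decays like $|y|^{-\bp}$. After changing variables, these errors scale like $\eps^{2\bp - (n-2)}$ times a boundary contribution, which is $o(\eps^2)$ under $\bp > n/2$. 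The genuinely new term is $-\lambda \int_\Omega (\eta u_\eps)^2\, dx$, which by the same change of variables is $-\lambda \eps^2 \int_{B_{R/\eps}} U(y)^2\, dy \cdot (1+o(1))$. Since $\bp \geq n/2$ iff $\gamma \leq (n-2)^2/4 - 1$, under the hypothesis the integral $\int U^2$ is finite (or diverges only logarithmically at the borderline $\gamma = (n-2)^2/4 - 1$). In either case the perturbation contributes a strictly negative leading term
\begin{equation*}
J^\Omega_{\gamma, s, \lambda}(\eta u_\eps) - \mu_{\gamma, s}(\rn) \;\leq\; -c\, \lambda\, \eps^2 \cdot (1 + |\log \eps|\, \mathbf{1}_{\gamma = (n-2)^2/4 - 1}) + o(\eps^2)
\end{equation*}
for some $c > 0$, which is negative for small $\eps$, proving the strict inequality and hence the attainability.

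The main obstacle is the asymptotic bookkeeping in the previous paragraph: one must verify that every cut-off error beats the gain $-\lambda \int u_\eps^2$, and this is precisely where the hypotheses $n\geq 4$ and $\gamma \leq (n-2)^2/4 - 1$ (equivalently $\bp \geq n/2$) are essential. If $\bp < n/2$ — the "low dimensional" range — the tail of $U$ is too heavy: $\int u_\eps^2$ blows up faster than the cut-off errors can be controlled uniformly in $\lambda$, and a linear perturbation no longer suffices; this is the regime where the positive Hardy-singular mass theorem becomes necessary, as the excerpt advertises. Once the strict inequality is in hand, the compactness argument of step one produces a minimizer $u_\infty \not\equiv 0$, and a standard symmetrization/maximum-principle postprocessing delivers a positive extremal solving the corresponding Euler--Lagrange equation.
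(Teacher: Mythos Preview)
Your proposal is correct and follows essentially the same route as the paper: establish the strict inequality $\mu_{\gamma,s,\lambda}(\Omega)<\mu_{\gamma,s}(\rn)$ by testing on $\eta u_\eps$ with $u_\eps=\eps^{-(n-2)/2}U(\cdot/\eps)$, then invoke the standard compactness threshold. The paper records exactly your two expansions (Claims~1 and~2 in Section~7): for $\gamma<\frac{(n-2)^2}{4}-1$ one gets $J_{\gamma,s,\lambda}(u_\eps)=\mu_{\gamma,s}(\rn)-\lambda C\eps^2+o(\eps^2)$ with $C=\int_{\rn}U^2/ (\int_{\rn}|x|^{-s}U^{\crits})^{2/\crits}<\infty$, and at the borderline $\gamma=\frac{(n-2)^2}{4}-1$ the log appears. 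One small difference worth noting: for the compactness step the paper (Proposition~\ref{tool.bis}) uses a direct Brezis--Lieb splitting of a minimizing sequence rather than the full Struwe decomposition you invoke; this avoids having to discuss where bubbles can form (your claim that they must sit at $0$ is not literally true when $s=0$, $\gamma=0$, though it does not affect the conclusion since any bubble still carries energy $\mu_{0,0}(\rn)=\mu_{\gamma,s}(\rn)$).
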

The proof again consists of testing the functional on minimizing sequences of the form $\eta U_\epsilon$, where $U_\epsilon$ is an extremal for $\mu_{\gamma, s}(\rn)$ 
and $\eta\in C^\infty_c(\Omega)$ is a cut-off function equal to $1$ in a neigbourhood of $0$, and showing that $\mu_{\gamma,s, \lambda}(\Omega)<\mu_{\gamma, s}(\rn)$. 

Janelli and others had partial results for the remaining interval that is when $\frac{(n-2)^2}{4}-1\leq \gamma <\frac{(n-2)^2}{4}$, a gap that we proceeded to fill recently in \cite{gr5}. In order to complete the picture, it was first important to know for which parameters $\gamma$ and $s$, the best constant $\mu_{\gamma, s}(\rn)$ is attained. 

\begin{proposition} Assume $\gamma <\frac{(n-2)^2}{4}$. Then, the best constant $\mu_{\gamma, s}(\rn)$ is attained if either $s>0$ or if $\{s=0$ and $\gamma \geq 0\}$. On the other hand, if $s=0$ and $\gamma <0$, then $\mu_{\gamma, s}(\rn)$ is not attained. \end{proposition}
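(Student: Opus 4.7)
The attainment statement splits by cases. When $0 < \gamma < (n-2)^2/4$ and $s \in [0, 2)$, the explicit functions $U$ in \eqref{J} are extremals. When $\gamma = 0$ and $s \in (0, 2)$, the functions $u_\eps$ displayed just before \eqref{J} are extremals. When $\gamma = s = 0$, the Aubin--Talenti functions \eqref{ext:00} are extremals. This covers all cases listed except the attainment in $\{\gamma < 0, s > 0\}$ and the non-attainment in $\{\gamma < 0, s = 0\}$, which I would handle separately.

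For attainment when $\gamma < 0$ and $s > 0$, my plan is to eliminate the Hardy term by the conjugation $u(x) = |x|^{-\beta_-(\gamma)} v(x)$, which is well-defined since $\beta_-(\gamma)$ is real whenever $\gamma < (n-2)^2/4$. Using the identity $\beta_-(\gamma)(n-2-\beta_-(\gamma)) = \gamma$ (a rewriting of the defining equation $\beta^2 - (n-2)\beta + \gamma = 0$) and one integration by parts, one obtains
\begin{equation*}
\int_{\rn} |\nabla u|^2 \, dx - \gamma \int_{\rn} \frac{u^2}{|x|^2} \, dx = \int_{\rn} |x|^{-2\beta_-(\gamma)} |\nabla v|^2 \, dx,
\end{equation*}
while the denominator of $J^{\rn}_{\gamma, s}(u)$ becomes $\bigl( \int_{\rn} |x|^{-s - \beta_-(\gamma) \crits} v^{\crits} \, dx \bigr)^{2/\crits}$. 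This reduces the minimization to a Caffarelli--Kohn--Nirenberg quotient of the form \eqref{CKN} with parameters $a = \beta_-(\gamma)$, $b = \beta_-(\gamma) + s/\crits$, $q = \crits$ lying in the admissible range \eqref{cond1}, and \emph{with no lower-order term}. Attainment of the reduced problem in this range (where $b > a$ since $s > 0$) is by now classical (Catrina--Wang, Chou--Chu, Dolbeault--Esteban--Loss--Tarantello). Alternatively, one may argue directly by concentration-compactness on a minimizing sequence $(u_k)$ normalized by $\int_{\rn} u_k^{\crits}/|x|^s \, dx = 1$, exploiting the scale invariance $u \mapsto \lambda^{(n-2)/2} u(\lambda \cdot)$ to rule out dilational bubbling at $0$ and $\infty$; since $s > 0$ breaks translation invariance, there is no further symmetry to quotient out.

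For non-attainment when $s = 0$ and $\gamma < 0$, since $-\gamma > 0$ one has the trivial bound $\mu_{\gamma, 0}(\rn) \geq \mu_{0, 0}(\rn)$. Testing $J^{\rn}_{\gamma, 0}$ against the Aubin--Talenti bubbles $u_{\lambda, x_0}$ of \eqref{ext:00} (with $\lambda = \eps = 1$) along a sequence $|x_0| \to \infty$, these functions remain extremal for $\mu_{0, 0}(\rn)$ while their $L^2$-mass concentrates around $x_0$ far from the singularity, so a direct estimate gives $\int_{\rn} u_{1, x_0}^2/|x|^2 \, dx \to 0$; hence $J^{\rn}_{\gamma, 0}(u_{1, x_0}) \to \mu_{0, 0}(\rn)$, and consequently $\mu_{\gamma, 0}(\rn) = \mu_{0, 0}(\rn)$. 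An extremal $w$ of $\mu_{\gamma, 0}(\rn)$ would therefore have to be an extremal of $\mu_{0, 0}(\rn)$ satisfying $\int_{\rn} w^2/|x|^2 \, dx = 0$; the former forces $w$ to have the explicit form \eqref{ext:00}, for which $\int_{\rn} w^2/|x|^2 \, dx > 0$. Contradiction.

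The main obstacle is the attainment case $\gamma < 0, s > 0$: Schwarz symmetrization in the \emph{original} variables goes the wrong way, since $-\gamma > 0$ and the Hardy weight $|x|^{-2}$ is already symmetric-decreasing, so rearrangement can only \emph{increase} the positive Hardy term in the numerator. Removing that term altogether via the conjugation $u = |x|^{-\beta_-(\gamma)} v$ is precisely what restores the usual rearrangement or CKN compactness machinery in the transformed problem.
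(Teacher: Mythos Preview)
Your proposal is correct, but it takes a genuinely different route from the paper. The paper deduces this proposition as the special case $\cone=\rn$ of a general result for cones (Theorem~\ref{th:ext:cone} and Corollary~\ref{coro:cone:2}), proved by a single, self-contained concentration-compactness argument \`a la Lions: one normalizes a minimizing sequence, uses the scale invariance of the cone to fix the mass in $B_1(0)$, and then runs the dichotomy/vanishing/compactness analysis to show that either the sequence converges to an extremal or one is forced into the situation $s=0$, $\mu_{\gamma,0}(\cone)=\mu_{0,0}(\rn)$, which for $\cone=\rn$ and $\gamma\geq 0$ is ruled out by a direct test-function computation.

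Your argument is instead a case-by-case patchwork. For $\gamma\geq 0$ you invoke the explicit extremals already recorded in the paper (equations \eqref{ext:00} and \eqref{J}); this is fine but outsources the proof to the cited references. For $\gamma<0$, $s>0$ your conjugation $u=|x|^{-\bm}v$ is exactly the change of variables the paper uses in Section~3 to identify \eqref{general} with the Caffarelli--Kohn--Nirenberg inequality \eqref{CKN}; you then cite the CKN attainment theorem of Catrina--Wang \cite{CW}, which indeed covers the relevant range $a=\bm<0$, $0<b-a<1$. This is legitimate and is in some sense the ``right'' explanation for why Schwarz symmetrization fails in the original variables when $\gamma<0$: after conjugation the Hardy term disappears and symmetrization (or the CKN machinery) works again --- with the caveat that for $a$ sufficiently negative the CKN minimizers can be non-radial, so one really does need the full Catrina--Wang existence result and not just the radial formula \eqref{J}. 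Your non-attainment argument for $\gamma<0$, $s=0$ is essentially the paper's (compare Remark~\ref{used} and the last steps of the proof of Theorem~\ref{th:ext:cone}); the paper sends $\eps\to 0$ with $x_0$ fixed rather than $|x_0|\to\infty$ with $\eps$ fixed, but either limit kills the Hardy term. Note finally that what you list as an ``alternative'' --- direct concentration-compactness exploiting scale invariance --- is precisely the paper's primary argument.
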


A proof for general cones is given in section 5. Note that (\ref{J}) gives explicit extremals for $\mu_{\gamma, s}(\rn)$ under the conditions $n\geq 3$, $0\leq s <2$ and $0\leq \gamma < \frac{(n-2)^2}{4}$.  

The next step was to define a notion of {\it Hardy interior mass} associated to the operator $-\Delta -\frac{\gamma}{|x|^2} -\lambda$ on a bounded domain $\Omega$ in $\rn$ containing  
$0$.

\begin{proposition} \label{interior.mass} {\rm (Ghoussoub-Robert \cite{gr5})} Assume $0\in \Omega$, where $\Omega$ is a  smooth bounded domain $\Omega$ in $\rn$ ($n\geq 3$). Suppose $a$ is a $C^2$-potential on $\Omega$ so that the operator $-\Delta-\frac{\gamma}{|x|^2}+a(x)$ is coercive. 
\begin{enumerate} 
\item There exists then  $H\in C^\infty(\overline{\Omega}\setminus\{0\})$ such that 
$$(E)\qquad\qquad \left\{\begin{array}{ll}
\Delta H-\frac{\gamma}{|x|^2}H+a(x)H=0 &\hbox{ in } \Omega\setminus\{0\}\\
H>0&\hbox{ in } \Omega\setminus\{0\}\\
H=0&\hbox{ on }\partial\Omega.
\end{array}\right.$$
These solutions are unique up to a positive multiplicative constant, and there exists $c>0$ such that 
$H(x)\simeq_{x\to 0}\frac{c}{|x|^{\beta_+(\gamma)}}.$
\item If either $a$ is sufficiently small around $0$ or if $\frac{(n-2)^2}{4}-1<\gamma<\frac{(n-2)^2}{4}$, then for any solution $H\in C^\infty(\overline{\Omega}\setminus\{0\})$ of $(E)$, there exist $c_1>0$ and $c_2\in\rr$ such that 
$$H(x)=\frac{c_1}{|x|^{\bp}}+\frac{c_2}{|x|^{\bm}}+o\left(\frac{1}{|x|^{\bm}}\right) \qquad \hbox{ as } x\to 0.$$
The uniqueness implies that the ratio $c_2/c_1$ is independent of the choice of $H$, hence the  `` Hardy-singular internal mass" of $\Omega$ associated to the operator $L_\gamma - a$ can be defined unambigously as 
$$m_{ \gamma, a}(\Omega):=\frac{c_2}{c_1}\in\rr.$$
\end{enumerate}
\end{proposition}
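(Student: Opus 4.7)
The plan is to obtain $H$ as the renormalized limit of Dirichlet solutions on exterior domains $\Omega \setminus B_\epsilon(0)$, identify its leading singularity at $0$ by comparison with the explicit radial kernels $|x|^{-\bp}$ and $|x|^{-\bm}$ of $-\Delta - \gamma/|x|^2$, and then read off the subleading term by a perturbative expansion around the principal part.

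For (1), for each small $\epsilon > 0$ the coercivity assumption yields a unique positive $H_\epsilon \in C^\infty(\overline{\Omega \setminus B_\epsilon(0)})$ solving the equation with $H_\epsilon = 0$ on $\partial\Omega$ and $H_\epsilon = \epsilon^{-\bp}$ on $\partial B_\epsilon(0)$. Since $|x|^{-\bp}$ and $|x|^{-\bm}$ are exact kernels of $-\Delta - \gamma/|x|^2$ on $\rn \setminus \{0\}$, using them as sub-/supersolutions and absorbing the lower-order perturbation $a(x) H_\epsilon$ via coercivity yields two-sided bounds $c_- |x|^{-\bp} \leq H_\epsilon(x) \leq c_+ |x|^{-\bp}$ on a fixed punctured neighborhood of $0$, uniformly in $\epsilon$. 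Elliptic regularity away from $0$ and a diagonal extraction then deliver a positive limit $H \in C^\infty(\overline{\Omega} \setminus \{0\})$. Refining this to $|x|^{\bp} H(x) \to c > 0$ requires a Frobenius analysis: decomposing $H$ in spherical harmonics, the radial zero-mode satisfies an ODE whose indicial exponents at $0$ are exactly $-\bp$ and $-\bm$, and positivity rules out that the less singular branch dominates. Uniqueness up to a positive multiplicative constant follows by subtracting two normalized solutions $H_1, H_2$ with the same leading coefficient: their difference is $o(|x|^{-\bp})$ near $0$, hence lies in the form domain of the operator, and coercivity forces $H_1 = H_2$.

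For (2), normalize so that $H(x) = c_1 \chi(x)|x|^{-\bp} + R(x)$ near $0$, where $\chi$ is a smooth cutoff equal to $1$ near the origin. A direct computation shows that $R$ satisfies the same homogeneous equation plus a source of size $O(|x|^{2 - \bp})$, arising from the term $a(x)\cdot c_1|x|^{-\bp}$ and the commutator with the cutoff. The hypothesis $\frac{(n-2)^2}{4} - 1 < \gamma < \frac{(n-2)^2}{4}$ is exactly equivalent to $\bp - \bm < 2$, i.e.\ $2 - \bp > -\bm$, which makes the source of strictly milder order than the candidate secondary singularity $|x|^{-\bm}$; the alternative hypothesis that $a$ is small near $0$ achieves the same by forcing the source to be controllably small. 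Expanding $R$ again in spherical harmonics, the radial zero-mode has the same two homogeneous behaviors $|x|^{-\bp}$ and $|x|^{-\bm}$ at $0$; the $-\bp$ branch has already been extracted by construction, so only a multiple $c_2 |x|^{-\bm}$ can survive, while nonzero angular modes have strictly larger indicial exponents and therefore produce strictly smaller remainders. Matching against the particular solution driven by the source (which is of size $o(|x|^{-\bm})$ by the exponent inequality) gives $c_2$ and the stated error. Finally, the uniqueness statement in (1) makes $m_{\gamma, a}(\Omega) := c_2/c_1$ well-defined, since $H \mapsto tH$ rescales both $c_1$ and $c_2$ by $t$.

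The main technical obstacle is the expansion in (2). The exponents $\bp, \bm$ coalesce as $\gamma$ approaches the Hardy constant $\frac{(n-2)^2}{4}$, and the strict inequality $\bp - \bm < 2$ is exactly what prevents logarithmic corrections from invading the expansion and guarantees that the particular solution driven by $a$ is of strictly smaller order than $|x|^{-\bm}$. Without this separation, the putative coefficient $c_2$ would cease to be unambiguously defined, which is precisely why the two alternative hypotheses on $\gamma$ or on the smallness of $a$ near $0$ are imposed.
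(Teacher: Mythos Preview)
The paper does not contain a proof of this proposition; it is stated with attribution to \cite{gr5} (a work listed as in preparation) and is used only as a black box in Section~7. There is therefore no paper proof to compare against, and I can only assess your sketch on its own terms.

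Your overall strategy is the natural one and is essentially correct, but a few points need tightening. First, in part (2) you misidentify the order of the source: writing $H=c_1\chi|x|^{-\bp}+R$ near $0$, the equation for $R$ has right-hand side $-a(x)\,c_1|x|^{-\bp}$, which is $O(|x|^{-\bp})$, not $O(|x|^{2-\bp})$. It is the \emph{particular solution} obtained by inverting $-\Delta-\gamma|x|^{-2}$ on this source that gains two powers and is $O(|x|^{2-\bp})$; your subsequent use of the equivalence $2-\bp>-\bm\Longleftrightarrow \bp-\bm<2$ is then exactly the right mechanism. Second, the Frobenius/spherical-harmonic picture you invoke applies cleanly to $-\Delta-\gamma|x|^{-2}$ because that operator separates in polar coordinates; with a general (non-radial) $C^2$ potential $a$ the full equation does not separate, so the honest argument treats $aH$ (and then $aR$) as a right-hand side and iterates, using the explicit fundamental pair $|x|^{-\bp},|x|^{-\bm}$ for the radial part of the unperturbed operator. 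Your text already gestures at this perturbative viewpoint, but it should be stated as an iteration rather than a one-shot modal decomposition. Third, in part (1) the phrase ``positivity rules out that the less singular branch dominates'' is not quite the mechanism: both $|x|^{-\bp}$ and $|x|^{-\bm}$ are positive. What actually rules out the $|x|^{-\bm}$ branch as the leading term is that such an $H$ would lie in $H^1_0(\Omega)$, and coercivity of $-\Delta-\gamma|x|^{-2}+a$ would then force $H\equiv 0$; positivity is used afterwards to fix the sign of the surviving leading coefficient.
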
 
One can then complete the picture as follows. 

\begin{table}[ht]
\begin{center}
\caption{$0\in \Omega$ (Linearly perturbed problems), $0\leq \lambda <\lambda_1(L_\gamma)$ and either $s>0$ or \{$s=0$ and $\gamma \geq 0$\}}
\begin{tabular}{|c|c|c|c|clc} \hline
{\bf  Hardy term }&  {\bf Dim.}& {\bf Sing.}& {\bf Analytic. cond.}&{\bf  Ext.}\\ \hline\hline
$-\infty < \gamma \leq \frac{(n-2)^2}{4}-1$& $n\geq 3$&$s > 0$
  & $\lambda>0$ &Yes\\  \hline
$\frac{(n-2)^2}{4}-1 <  \gamma <\frac{(n-2)^2}{4}$& $n\geq 3$&$s >0$
  & $m_{\gamma, -\lambda}(\Omega) >0$ & Yes\\ 
  \hline
$0\leq  \gamma \leq \frac{(n-2)^2}{4}-1$& $n\geq 4$&$s = 0$
  & $\lambda>0$ &Yes&\\  \hline
$\frac{(n-2)^2}{4}-1<  \gamma <\frac{(n-2)^2}{4}$& $n\geq 4$&$s=0$
  & $m_{\gamma, -\lambda}(\Omega) >0$ & Yes\\ \hline
\end{tabular}
\end{center}
\end{table}
As to the case when $s=0$ and $\gamma <0$, we need a more standard 
notion of mass associated to the operator $L_\gamma$ at an internal point $x_0\in \Omega$, which is reminiscent of Shoen-Yau's approach to complete the solution of the Yamabe conjecture in low dimensions. For that, one considers for a given $\gamma<0$, the corresponding {\it Robin function} or the regular part of the Green function with pole at $x_0\in \Omega\setminus \{0\}$. One shows that for $n=3$, any solution $G$ of 
$$\left\{\begin{array}{ll}
-\Delta G-\frac{\gamma}{|x|^2} G-\lambda G=0 &\hbox{ in } \Omega \setminus\{x_0\}\\
\hfill G>0 &\hbox{ in }\Omega \setminus\{x_0\}\\
\hfill G=0 &\hbox{ on }\partial\Omega,
\end{array}\right.$$
 is unique up to multiplication by a constant, and that there exists $R_{\gamma, \lambda}(\Omega, x_0)\in \rr$ and $c_{\gamma, \lambda}(x_0)>0$ such that
\begin{equation}
G(x)=c_{\gamma, \lambda} (x_0)\left(\frac{1}{|x-x_0|^{n-2}}+R_{\gamma, \lambda}(\Omega, x_0)\right)+o(1)\quad \hbox{ as }x\to x_0.
\end{equation}
The quantity $R_{\gamma, \lambda}(\Omega, x_0)$ is then well defined and will be called the {\it internal mass} of $\Omega$ at $x_0$. We then define
\[
\hbox{$R_{\gamma, \lambda}(\Omega)=\sup\limits_{x\in \Omega}R_{\gamma, \lambda}(\Omega, x)$ \quad and \quad $r(\Omega)=\sup\limits_{x\in \Omega}|x|^2$.}
\]
The following table summarizes the remaining situations. 

\begin{table}[ht]
\begin{center}
\caption{$0\in \Omega$ (Linearly perturbed problems): $0\leq \lambda <\lambda_1(L_\gamma)$ and  $s=0$, $\gamma <0$}

\begin{tabular}{|c|c|c|c|cl} \hline
{\bf  Hardy term }&  {\bf Dim.}& {\bf Geom. cond.} &{\bf  Extremal}\\ \hline\hline
$-\infty < \gamma<0$& $n\geq 4$& $\frac{|\gamma|}{r(\Omega)}<\lambda$ & Yes\\  \hline
$-\infty < \gamma<0$& $n\geq 3$&
    $\lambda \leq \frac{|\gamma|}{r(\Omega)}$  & No\\  \hline
$-\infty < \gamma\leq 0$&$n = 3$& $R_{\gamma, \lambda}(\Omega) >0$&Yes\\ 
 \hline \hline
\end{tabular}
\end{center}
\end{table}
The following theorem summarizes the various situations.

\begin{theorem} \label{main.interior} Let $\Omega$ be a smooth bounded domain in $\rn$ ($n\geq 3$) such that $0\in  \Omega$ and let $0\leq s < 2$, $\gamma <\frac{(n-2)^2}{4}$, and $0 < \lambda <\lambda_1(L_\gamma,\Omega)$. 
\begin{enumerate}

\item If either  $s>0$, or \{$s=0$, $\gamma\geq 0$\}, then there are extremals for $\mu_{\gamma,s}(\Omega)$ under one of the following two conditions:
\begin{enumerate} 
\item  $-\infty <\gamma\leq\frac{(n-2)^2}{4}-1$   
\item $\frac{(n-2)^2}{4}-1  <\gamma <\frac{(n-2)^2}{4}-1$ and  $m_{\gamma,-\lambda} (\Omega)$  is positive. 
\end{enumerate} 
\item If $s=0$, and $\gamma <0$, then there are extremals for $\mu_{\gamma,s}(\Omega)$ under one of the following two conditions:
\begin{enumerate} 
\item  $n\geq 4$ and  $\frac{|\gamma|}{r(\Omega)}<\lambda <\lambda_1(L_\gamma)$.
\item $n = 3$ and $R_{\gamma, -\lambda}(\Omega)>0$.
\end{enumerate} 
\item If $s=0$, $\gamma < 0$, $n\geq 3$ and $0\leq \lambda \leq \frac{|\gamma|}{r(\Omega)}$, 
then there is no extremal for $\mu_{\gamma,s}(\Omega)$.
\end{enumerate}
\end{theorem}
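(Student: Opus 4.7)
The plan is to reduce each existence claim to verifying the Trudinger compactness threshold
$$\mu_{\gamma,s,\lambda}(\Omega)<\mu_{\gamma,s}(\rn),$$
below which a Hardy--Sobolev version of Struwe's decomposition prevents bubbling and produces an extremal from any minimizing sequence. Non-existence will be handled separately by a direct variational comparison, with no Pohozaev identity needed.

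For Part (1) ($s>0$ or $\{s=0,\gamma\geq 0\}$), we test with $\eta U_\eps$, where $U_\eps$ is the explicit $\rn$-extremal from \eqref{J} and $\eta\in C^\infty_c(\Omega)$ equals $1$ near $0$. Expanding $J^\Omega_{\gamma,s,\lambda}(\eta U_\eps)$ as $\eps\to 0$, the gradient and $|x|^{-s}$-weighted numerator reproduce $\mu_{\gamma,s}(\rn)$ up to a boundary error governed by the $|x|^{-\bp}$-decay of $U_\eps$, while $-\lambda\int(\eta U_\eps)^2$ contributes a strict gain. When $\gamma\leq\frac{(n-2)^2}{4}-1$ (equivalently $\bp\geq n/2$), $U$ is square-integrable at infinity and the $\lambda$-gain dominates the boundary error for every $\lambda>0$, yielding case (1)(a). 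In the borderline range $\frac{(n-2)^2}{4}-1<\gamma<\frac{(n-2)^2}{4}$ both terms are of the same order and a refined ansatz is needed: using the expansion $H(x)=c_1|x|^{-\bp}+c_2|x|^{-\bm}+o(|x|^{-\bm})$ from Proposition \ref{interior.mass}, we build $\varphi_\eps=\eta U_\eps+\eps^{(\bp-\bm)/2}\psi_\eps$, with $\psi_\eps$ encoding the regular part of $H$ and vanishing on $\partial\Omega$, and show that the leading correction in the expansion is a positive multiple of $m_{\gamma,-\lambda}(\Omega)=c_2/c_1$.

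For Part (2) ($s=0$, $\gamma<0$), the Aubin--Talenti bubble $U_\eps$ concentrated at an interior point $x_0\in\Omega\setminus\{0\}$ plays the role of the base test function. Localizing gives $-\gamma\int(\eta U_\eps)^2/|x|^2\simeq(-\gamma/|x_0|^2)\int U_\eps^2$, so the net perturbation contributes $(\lambda-|\gamma|/|x_0|^2)\int U_\eps^2$ at leading order. Choosing $|x_0|^2$ close to $r(\Omega)$ and assuming $\lambda>|\gamma|/r(\Omega)$ makes this negative, which in dimension $n\geq 4$ defeats the $O(\eps^{n-2})$ boundary error and yields case (2)(a). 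In dimension $n=3$ the $L^2$-norm of $U_\eps$ is of the same order as the boundary error, and one must instead work with a truncation of the Green function $G$ of $L_\gamma-\lambda$ with pole at $x_0$, whose expansion $G=c_{\gamma,\lambda}(x_0)(|x-x_0|^{-1}+R_{\gamma,-\lambda}(\Omega,x_0))+o(1)$ puts the Robin mass into the leading coefficient; taking the supremum over $x_0$ and using $R_{\gamma,-\lambda}(\Omega)>0$ gives the strict inequality for case (2)(b).

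For Part (3), non-existence follows directly: the pointwise bound $|x|^2\leq r(\Omega)$ combined with $\lambda\leq|\gamma|/r(\Omega)$ gives
$$-\gamma\int_\Omega\frac{u^2}{|x|^2}\,dx-\lambda\int_\Omega u^2\,dx\geq\left(\frac{|\gamma|}{r(\Omega)}-\lambda\right)\int_\Omega u^2\,dx\geq 0$$
for all $u\in\huno$, so $J^\Omega_{\gamma,0,\lambda}(u)\geq\mu_{0,0}(\rn)$. Concentrating an Aubin--Talenti bubble at any interior point yields the reverse bound $\mu_{\gamma,0,\lambda}(\Omega)\leq\mu_{0,0}(\rn)$, forcing equality; if an extremal $u$ existed, its zero-extension would be an extremal for $\mu_{0,0}(\rn)$ in $D^{1,2}(\rn)$, hence by \eqref{ext:00} strictly positive on all of $\rn$, contradicting $u|_{\partial\Omega}=0$. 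The main obstacle is the refined test function construction in cases (1)(b) and (2)(b): one must match $U_\eps$ to the singular profile of $H$ or $G$ to the necessary order while keeping the test function in $\huno$, arrange that the first several error terms cancel, and verify that the surviving leading coefficient is precisely a positive multiple of the relevant mass.
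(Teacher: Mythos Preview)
Your proposal is correct and follows essentially the same route as the paper: reduce to the threshold inequality $\mu_{\gamma,s,\lambda}(\Omega)<\mu_{\gamma,s}(\rn)$ via Proposition~\ref{tool.bis}, then in Part~(1) test with $\eta U_\eps$ (augmented by the regular part of $H$ when $\gamma>\frac{(n-2)^2}{4}-1$), in Part~(2) test with an Aubin--Talenti bubble at $x_0\neq 0$ (augmented by the Green function in dimension $3$), and in Part~(3) use the pointwise bound $\lambda+\gamma|x|^{-2}\leq 0$ to force equality with $\mu_{0,0}(\rn)$, which has no compactly supported extremals. One slip: in Part~(2)(a) the net contribution to the numerator is $(|\gamma|/|x_0|^2-\lambda)\int U_\eps^2$, not $(\lambda-|\gamma|/|x_0|^2)\int U_\eps^2$, so under your hypothesis it becomes \emph{negative} (as you want), not positive---your conclusion is right but the sign in the displayed coefficient is reversed.
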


\medskip \noindent One also notes that the mass function $m_{\gamma, a}(\Omega)$ (when defined) satisfies the following properties:
\begin{itemize}
\item[$\bullet$] $m_{\gamma, 0}(\Omega)<0$,
\item[$\bullet$] If $a\leq a'$ and $a\not\equiv a'$, then $m_{\gamma, a}(\Omega)>m_{\gamma, a'}(\Omega)$,
\item[$\bullet$] If $\Omega\subsetneq \Omega'$, then $m_{\gamma, a}(\Omega)<m_{\gamma, a'}(\Omega')$.
\item[$\bullet$] The function $a\mapsto m_{\gamma, a}(\Omega)$ is continuous for the  $C^0(\overline{\Omega})$ norm.
\end{itemize}
It follows that $m_{\gamma, 0}(\Omega)<0$ and $\lambda\mapsto m_{\gamma, -\lambda}(\Omega)$ is strictly increasing and continuous on the interval $[0, \lambda_1(L_\gamma))$. If there exists $\lambda_0\in (0, \lambda_1(L_\gamma)$ such that $m_{\gamma, -\lambda_0}(\Omega)=0$, then %
$m_{\gamma, -\lambda}(\Omega)>0$ for $\lambda_0<\lambda <\lambda_1(L_\gamma)$, which yields that $\mu_{\gamma, s, \lambda}(\Omega)$ is achieved whenever $\lambda$ is in the latter interval.

\noindent Two open problems are worth mentioning:

{\bf Problem 1: } Find necessary and sufficient geometric conditions on $\Omega$, which guarantee 
that if $\frac{(n-2)^2}{4}-1<\gamma<\frac{(n-2)^2}{4}$, then there exists $\lambda_0\in (0, \lambda_1(L_\gamma))$ such that $m_{\gamma, -\lambda_0}(\Omega)=0$. 

{\bf Problem 2: } Assuming such a $\lambda_0(L_\gamma)$ exists, can one show that there are no extremals if   $0<\lambda <\lambda_0(L_\gamma)$.    
Note that this was verified for general domains by Druet \cite{d2} in the case $\gamma=s=0$ and $n=3$.  If $\Omega$ is a unit ball $B$, one can then show -- just as  Janelli \cite{Jan} did in the case when $\gamma >0$, $s=0$ -- that this is indeed the case by showing that if $\frac{(n-2)^2}{4}-1<\gamma<\frac{(n-2)^2}{4}$, then 
\[
\hbox{$\mu_{\gamma, s, \lambda}(B)$ is achieved \,  if and only if \,  $\lambda^*(L_\gamma)\leq \lambda <\lambda_1(L_\gamma)$,}
\]
 where 
\begin{equation}
\lambda^*(L_\gamma)=\inf \left\{\frac{\int_B\frac{|\nabla u|^2}{|x|^{2\beta_+(\gamma)}}dx}{\int_B\frac{u^2}{|x|^{2\beta_+(\gamma)}}dx}; u\in H^1_0(B) \right\}.
\end{equation}
In other words, $\lambda_0(L_\gamma)=\lambda^*(L_\gamma)$. 

The above analysis lead to the following definition of a {\it critical dimension for the operator $L_\gamma$}. It is the largest scalar $n_\gamma$ such that for $n < n_\gamma$, there exists a bounded smooth domain $\Omega \subset \rn$ and a $\lambda \in (0, \lambda_1(L_\gamma, \Omega))$ such that $\mu_{\gamma, s, \lambda}(\Omega)$ is not attained.

One can then deduce that the critical dimension for $L_\gamma$ is 
\begin{equation}
n_\gamma =\left\{\begin{array}{ll} 2\sqrt{\gamma +1}+2\quad & \, {\rm if}\, \gamma \geq -1\\
2 \quad  & \, {\rm if}\,  \gamma <-1.
\end{array}\right.
\end{equation}
Note that $n < n_\gamma$ is exactly when $\bp-\bm < 2$, which is the threshold where
the radial function $x\to |x|^{-\beta_+(\gamma)}$ is locally $L^2$-summable.
 
 \section{Borderline variational problems with a boundary singularity}

The situation changes dramatically and becomes much more interesting if the singularity $0$ belongs to the boundary of the domain $\Omega$. For one, the test functions $\eta U_\epsilon$ don't belong to $\huno$ anymore, and one cannot mimic the arguments given above. Actually, the differences already start with the most basic properties of the Hardy-Schr\"odinger operator $L_\gamma=-\Delta -\frac{\gamma}{|x|^2}$. 

To begin with, recall that if $0\in\Omega$, then   $L_\gamma$ is positive if and only if  $\gamma<\frac{(n-2)^2}{4}$, while  if 
$0\in\partial\Omega$ the operator  $L_\gamma$ could be positive for larger value of $\gamma$,  potentially reaching the maximal constant $\frac{n^2}{4}$ on convex domains. Moreover, if $0\in\Omega$, we have already noted that the best constant in the Hardy inequality $\mu_{0,2} (\Omega)$ is then always equal to $\frac{(n-2)^2}{4}$ and is never achieved, while if $0\in \partial \Omega$, the best constant $\mu_{0,2} (\Omega)$ can be anywhere in the interval $(\frac{(n-2)^2}{4}, \frac{n^2}{4}]$, and it is achieved if $\mu_{0,2} (\Omega)<\frac{n^2}{4}$ (See Ghoussoub-Robert \cite{gr4}).

The situation changes further when $0 \leq s <2$. Indeed, we had seen that whenever $0\in \Omega$, $\mu_{\gamma,s}(\Omega)=\mu_{\gamma,s}(\rn)$, and is never achieved unless $\Omega$ is essentially equal to $\rn$. The first indication that a new phenomenon may occur, when $0\in \partial \Omega$, was given by the following surprising result of  Egnell \cite{egnell} even when $\gamma=0$. 
He showed that 
 if $D$ is a nonempty connected domain of $\mathbb{S}^{n-1}$, the unit sphere in $\rn$, and  $C:=\{r\theta; \, r>0,\, \theta\in D\}$ is the cone based at $0$ induced by $D$, then there are extremals for $\mu_{0,s}(C)$ whenever $s>0$.
 
An important point to note here is that the cone $C$ is not smooth at $0$, unless it is $\rnp$ or $\rn$. 
Actually, if a general domain $\Omega$ with $0$ on its boundary is smooth, then it  looks more like the half-space $\rnp$  
around $0$, and not like $\rn$ as in the case $0\in\Omega$. One therefore has to  compare $\mu_{\gamma,s}(\Omega)$ with $\mu_{\gamma,s}(\rnp)$, which is strictly larger than $\mu_{\gamma,s}(\rn)$. One can also easily show that if $\Omega$ is smooth bounded and $0\in \partial\Omega$, then 
$$
\mu_{\gamma,s}(\rn)<\mu_{\gamma, s}(\Omega) \leq \mu_{\gamma,s}(\rnp),
$$
 and if $\Omega$ is convex (or if $\Omega\subset \rnp$), then  $\mu_{\gamma,s}(\Omega)= \mu_{\gamma,s}(\rnp)$ and again $\mu_{\gamma,s}(\Omega)$ has no extremals. 

Another discrepancy with the case where $0$ is in the interior,  is the fact that the extremals for $\mu_{\gamma, s}(\rn)$, which are the building blocks for the extremals on bounded domains, can often be written explicitly as seen above, while the ones for $\mu_{\gamma, s}(\rnp)$ are not. So one then tries to understand as much as possible the profile of such extremals, which happen to solve the equation 
\begin{eqnarray} \label{two}
\left\{ \begin{array}{llll}
-\Delta u-\gamma \frac{u}{|x|^2}&=&\frac{u^{2^*(s)-1}}{|x|^s} \ \ &\text{on } \rnp\\
\hfill u&>&0 &\text{on } \rnp\\
\hfill u&=&0 &\text{on }\partial \rnp.
\end{array} \right.
\end{eqnarray}
This was done in a recent   analysis by Ghoussoub-Robert \cite{gr4}, where the needed information on the profile is given. The non-explicit solution has the following properties:
\begin{itemize}
\item  {\it Symmetry: } $u\circ\sigma=u$ for all isometry of $\rn$ such that $\sigma(\rnp)=\rnp$. In particular, there 
exists $v\in C^2(\rr_+\times \rr)$ such that for all $x_1>0$ and all $x'\in\rr^{n-1}$, 
$$u(x_1,x')=v(x_1,|x'|).$$  
\item {\it Asymptotic profile:} If $u\not\equiv 0$, then there exist $K_1,K_2>0$ such that
\begin{equation}\nonumber
u(x)\sim_{x\to 0}K_1\frac{x_1}{|x|^{\am}}\quad \hbox{ and } \quad u(x)\sim_{|x|\to +\infty}K_2\frac{x_1}{|x|^{\ap}},
\end{equation}
\end{itemize}
where 
\begin{equation}
\alpha_{\pm}(\gamma):=\frac{n}{2}\pm\sqrt{\frac{n^2}{4}-\gamma}.
\end{equation}
 Keep in mind that $x\mapsto x_1|x|^{-\alpha}$ is a solution of  $(-\Delta -\frac{\gamma}{|x|^2})x_1|x|^{-\alpha}=0$ on $\rn\setminus\{0\}$ if and only if $\alpha\in \{\am,\ap\}$. 
Note that $\am<\frac{n}{2}<\ap$, which points to  the difference between the ``small" solution, namely $x\mapsto x_1|x|^{-\am}$, which is ``variational", i.e. is locally in $D^{1,2}(\rnp)$, and the ``large one"  $x\mapsto x_1|x|^{-\ap}$, which is not. 

It also turned out that, unlike the case where $0\in \Omega$, there are examples of domains with $0\in \partial \Omega$ such that $\mu_{\gamma,s}(\Omega)< \mu_{\gamma,s}(\rnp)$, which means that $\mu_{\gamma,s}(\Omega)$ has a good chance to be attained. This was first observed by Ghoussoub-Kang \cite{gk} in the most basic case, where $0<s<2$ and $\gamma =0$. 
Again, this condition limits the energy level of minimizing sequences, and therefore prevents the creation of bubbles (in this case around $0$) and hence ensures compactness. There are many ways to see this, and we use the opportunity to introduce Struwe's approach via his famed decomposition \cite{st}.
 
Since $\partial \Omega$ is smooth at $0$, there exists $U,V$ open subsets of $\rn$ such that $0\in U$, $0\in V$ and a $C^\infty-$diffeomorphism  $\varphi:U\to V$ such that $\varphi(0)=0$, 
$$\varphi(U\cap\{x_1>0\})=\varphi(U)\cap\Omega,\quad {\rm and} \quad \varphi(U\cap\{x_1=0\})=\varphi(U)\cap\partial\Omega.$$
Up to an affine transformation, we can assume that the differential of $\varphi$ at $0$ is the identity map. 
Letting $\eta\in C_c^\infty(U)$ be such that $\eta(x)\equiv 1$ in a neighborhood of $0$, and given $\epsilon\in (0,\crits-2)$, we consider the subcritical minimization problems:
$$\mu^\epsilon_{0,s}(\Omega):=\inf_{u\in\huno\setminus\{0\}} \frac{\int_\Omega|\nabla u|^2\, dx}{\left(\int_\Omega\frac{|u|^{\crits-\epsilon}}{|x|^s}\, dx\right)^{\frac{2}{2^*(s)-\eps}}}.$$
Since the exponent $p_\epsilon:=\crits-\eps$ is subcritical,   the embedding $\huno\hookrightarrow L^{p_\epsilon}(\Omega, |x|^{-s})$ is compact, and we therefore have a minimizer $\ue\in\huno\setminus\{0\}$ where  $\mu^\epsilon_{0,s}(\Omega)$ is attained. Regularity theory then yields that $\ue\in C^\infty(\overline{\Omega}\setminus\{0\})\cap C^1(\overline{\Omega})$ and we can assume that $\ue$ solves the equation
\begin{equation} 
\left\{\begin{array}{ll}\label{syst:ue}
\Delta\ue=\frac{\ue^{p_\eps -1}}{|x|^s}&\hbox{ in }\Omega\\
\quad \ue>0 &\hbox{ in }\Omega\\
\quad \ue=0 &\hbox{ on }\partial\Omega.
\end{array}\right.
\end{equation}
The ``free energy" of the solutions then satisfy
$\int_\Omega\frac{\ue^{p_\eps}}{|x|^s}\, dx=\mu^\epsilon_{0,s}(\Omega)^{\frac{p_\epsilon}{p_\epsilon-2}}.$ 
The standard strategy is then to analyze what happens when we let $\epsilon\to 0$. This is not straightforward since the embedding $\huno\rightharpoonup L^{\crits}(\Omega;|x|^{-s})$ is not compact. In the case $s=0$, Struwe \cite{st} gave a useful decomposition describing precisely this lack of compactness for minimzing sequences such as $(\ue)_\eps$, which was extended to this situation by Ghoussoub-Kang \cite{gk}. It says that there exists $\Lambda>0$ with $\Vert\ue\Vert_{\huno}\leq\Lambda$ for all $\epsilon>0$, $u_0\in\huno$,  as well as $N$ positive bubbles $(B_{i,\epsilon})_{\epsilon}$, $i\in\{1,...,N\}$ such that
\begin{equation}\label{dec:str}
\ue=u_0+\sum_{i=1}^NB_{i,\epsilon}+R_\epsilon,
\end{equation}
where $\lim_{\epsilon\to 0}R_\epsilon=0$ strongly in $\huno$. 

A {\it bubble} here is any family $(B_\epsilon)_{\epsilon}\in\huno$ of the form 
\begin{equation}\label{bubble}
\hbox{$B_{\epsilon}(x)=\eta(x)\mu_{\epsilon}^{-\frac{n-2}{2}}u\left(k_{\epsilon}^{-1}\varphi^{-1}(x)\right)$ \quad if $x\in U\cap\rnp$ and $0$ otherwise,}
\end{equation}
where  $u\in D^{1,2}(\rnp)\setminus\{0\}$ is a solution of 
$\Delta u=\frac{|u|^{\crits-2}u}{|x|^s}$ in $\rnp$, and $(\mu_\epsilon)_{\epsilon}\in\rr_+$ is such that $\lim_{\epsilon\to 0}\mu_{\epsilon}=0$, with $k_\epsilon=\mu_\epsilon^{1-\frac{\epsilon}{\crits-2}}$ satisfying $\lim_{\epsilon\to 0}k_\epsilon^\epsilon=c\in (0,1].$\\
Note that for any bubble, we have $\int_\Omega\frac{|B_{\epsilon}|^{p_\eps}}{|x|^s}\, dx +o(1)\geq \mu_{0,s}(\rnp)^{\frac{\crits}{\crits-2}}+o(1)$,  which means that if there is any bubble in the decomposition, then necessarily $$\int_\Omega\frac{\ue^{p_\eps}}{|x|^s}\, dx \geq \int_\Omega\frac{B_{i,\epsilon}^{p_\eps}}{|x|^s}\, dx +o(1)\geq \mu_{0,s}(\rnp)^{\frac{\crits}{\crits-2}}+o(1), \,\, \hbox{where $\lim_{\epsilon\to 0}o(1)=0$.}$$
Since $\lim_{\epsilon\to 0}\mu^\epsilon_{0,s}(\Omega)=\mu_{0,s}(\Omega)$, one then get that $\mu_{0,s}(\Omega)\geq \mu_{0,s}(\rnp),$ which contradicts the initial energy hypothesis. It follows that there is no bubble and therefore $\lim_{\epsilon\to 0}\ue=u_0$ in $\huno$, yielding that $u_0$ is an extremal for $\mu_{0,s}(\Omega)$. 

The question now is what geometric condition on $\Omega$ insures that we have the analytic condition $\mu_{\gamma, s}(\Omega) <\mu_{\gamma, s} (\rnp)$. In view of the above,  for any hope to find extremals, one has to avoid 
situations where $\Omega$ is convex or if it lies on one side of a hyperplane that is tangent at $0$. This was first confirmed by Ghoussoub-Kang \cite{gk}, who proved that this is indeed the case --and that extremals exist-- provided $n\geq 4$ and the principal curvatures of $\partial \Omega$ at $0$ are all negative. 

Concerning terminology, recall that the principal curvatures are the eigenvalues of the second fundamental form of the hypersurface $\partial\Omega$ oriented by the outward normal vector. The second fundamental form being
$$II_0(\vec{X},\vec{Y})=(dn_0(\vec{X}),\vec{Y})\hbox{ for }\vec{X},\vec{Y}\in T_0\partial\Omega,$$
where $dn_0$ is the differential of the outward normal vector at $0$ and $(\cdot,\cdot)$ is the Euclidean scalar product. 

The result of Kang-Ghoussoub was eventually improved later by Ghoussoub-Robert \cites{gr1,  gr2}, 
who also proved it for $n=3$ and by only requiring that {\it the mean curvature}, i.e., the trace of the second fundamental form, at $0$, to be negative (see also Chern-Lin  \cite{CL5}).  Qualitatively, this says that there are extremals for $\mu_{0,s}(\Omega)$, whenever the domain at $0$ has more concave directions than convex ones, in the sense that the negative principal directions dominate quantitatively the positive principal directions. This allows for new examples, which are neither convex nor concave at $0$, and for which the extremals exist. Note that this result does not give any information about the value of the best constant.

We now  illustrate how the mean curvature enters in the picture  in the simplest case, namely when $s>0$ and $\gamma=0$. It consists of performing a more refined blow-up analysis on the minimizing sequences considered above. The proof --due to Ghoussoub-Robert \cite{gr1}--  uses the machinery developed in Druet-Hebey-Robert \cite{dhr} for equations of Yamabe-type on manifolds. It also allows to tackle problems with arbitrary high energy and not just minima \cite{gr2}.

We consider again the solutions $(u_\epsilon)$ of the subcritical problems corresponding to $p_\epsilon=2^*(s)-\epsilon$ with $\epsilon\in (0,\crits-2)$, in such a way that 
\begin{equation}\label{nrj:ue}
\lim_{\eps\to 0}\int_\Omega\frac{\ue^{\crits-\eps}}{|x|^s}\, dx=\mu_{0,s}(\Omega)^{\frac{\crits}{\crits-2}}.
\end{equation}
One then proves (see Ghoussoub-Robert \cite{gr1}) that 
 either $u_\epsilon$ converges to an extremal of 
$\mu_{0,s}(\Omega)$, or  blow-up occurs in the following sense: $u_\epsilon$ converges weakly to zero  and there exists a solution $v$ for 
\begin{equation}\label{eq:lim.0}
\hbox{$-\Delta v=\frac{v^{\crits-1}}{|x|^s}$ in $\rnp$,  \, $v>0$ in $\rnp$ and  $v=0$ on $\partial\rnp$,}
\end{equation}
 such that
  \[
  \int_{\rnp}|\nabla v|^2\, 
dx=\mu_{0, s}(\Omega)^{\frac{\crits}{\crits-2}}=\mu_{s}(\rnp)^{\frac{\crits}{\crits-2}},
\]
while  -modulo passing to a subsequence- we have
\begin{equation}\label{30}
\lim_{\eps\to 0}\eps \, (\max_\Omega 
u_\eps)^{\frac{2}{n-2}}=\frac{(n-s)\int_{\partial\rnp}|x|^2|\nabla v|^2\, 
dx}{n(n-2)^2\mu_{s}(\rnp)^{\frac{n-s}{2-s}}}\cdot H_\Omega(0),
\end{equation}
where  $H_\Omega(0)$ is the mean curvature of the oriented boundary 
$\partial\Omega$ at $0$.
Note that if $H_\Omega(0) <0$, such a blow-up cannot occur and we therefore end up with an extremal. 

To sketch a proof of such a dichotomy, we start as before with the Struwe decomposition to write that  either there exists $u_0\in\huno\setminus\{0\}$ such that $\lim_{\epsilon\to 0}\ue=u_0$ in $\huno$, hence it is an extremal for $\mu_{0, s}(\Omega)$, or there exists a bubble $(B_\epsilon)_{\epsilon>0}$ such that 
\begin{equation}\label{dec:bulle}
u_\epsilon=B_\epsilon+o(1) \quad \hbox{where $\lim_{\epsilon\to 0}o(1)=0$ in $\huno$. }
\end{equation}
Moreover, the function $v\in D^{1,2}(\rnp)$ defining the bubble  is positive, in particular, $v\in D^{1,2}(\rnp)\cap C^\infty(\overline{\rnp}\setminus\{0\})\cap C^1(\overline{\rnp})$ and is a solution for (\ref{eq:lim.0}).
The idea is to prove that the family $(\ue)_{\epsilon>0}$ behaves more or less like the bubble $(B_\epsilon)_{\epsilon>0}$. In fact \eqref{dec:bulle} already indicates that these two families are equal up to the addition of a term vanishing in $\huno$. But we actually need something more precise, like a pointwise description, as opposed to a weak description in Sobolev space. This requires a good knowledge of the bubbles: a difficult question since bubbles are not explicit here as in the case of $\rn$. The proof has two main steps:

First, one shows that  there exists $C_1>0$ such that for all $\epsilon>0$,
\begin{equation}\label{est:co}
\frac{1}{C_1}\frac{\mu_\epsilon^{n/2}d(x,\partial\Omega)}{(\mu_\epsilon^2+|x|^2)^{n/2}}\leq \ue(x)\leq C_1\frac{\mu_\epsilon^{n/2}d(x,\partial\Omega)}{(\mu_\epsilon^2+|x|^2)^{n/2}} \quad \hbox{for all $x\in\Omega$,}
\end{equation}
where $(\mu_\epsilon)$ are involved in the definition (\ref{bubble}) of the bubble $(B_\epsilon)$.  

The next step is to use  
the following Pohozaev identity, 
 $$\int_{\Omega} x^i\partial_i \ue\Delta\ue\, dx+\frac{n-2}{2}\int_{\Omega}\ue\Delta\ue\, dx=-\frac{1}{2}\int_{\partial \Omega}(x,\nu)|\nabla \ue|^2\, d\sigma$$
 to get that
$$\left(\frac{n-2}{2}-\frac{n-s}{\crits-\epsilon}\right)\int_{\Omega} \frac{\ue^{\crits-\epsilon}}{|x|^s}\, dx=-\frac{1}{2}\int_{\partial \Omega}(x,\nu)|\nabla \ue|^2\, d\sigma.$$
The left-hand-side is easy to estimate with \eqref{nrj:ue}. For the right-hand-side, one uses the optimal estimate \eqref{est:co} to obtain
$$\lim_{\epsilon\to 0}\frac{\epsilon}{\mu_\epsilon}=\frac{(n-s)\int_{\partial\rnp} II_0(x,x)|\nabla v|^2\, dx}{(n-2)^2\int_{\rnp}|\nabla v|^2\, dx},$$
where $II_0$ is the second fondamental form at $0$ defined on the tangent space of $\partial\Omega$ at $0$ that we identify with $\partial\rnp$. Finally, in view of the symmetry result mentioned above for the solution $u$, that is 
$u(x_1,\bar{x})=\tilde{u}(x_1,|x|)$ where $\tilde{u}: \rr_+\times\rr\to\rr$, which means that the limit above rewrites as (\ref{30}).

Optimal pointwise estimates like (32) have their origin in the work of Atkinson-Peletier \cite{ap} and Br\'ezis-Peletier \cite{bp}. Pioneering work also include Han \cite{han} and Hebey-Vaugon \cite{hv} in the case of a Riemannian manifold. For $s=\gamma=0$, the general pointwise estimates are performed in the monograph \cite{dhr} of Druet-Hebey-Robert. We also refer to Ghoussoub-Robert \cite{gr2} for the optimal control with arbitrary high energy when $s>0$ and $\gamma=0$. Other methods developed to get pointwise estimates are due to Schoen-Zhang \cite{sz} and Kuhri-Marques-Schoen \cite{kms}.

The negativity of the mean curvature at $0$ turned out to be sufficient for the existence of extremals not only in the case where $\gamma=0$, but also for a large range of $\gamma >0$.

\begin{theorem} {\rm (Chern and Lin \cite{CL5})} Let $\Omega$ be a smooth bounded domain such that $0\in \partial \Omega$. Assume $n\geq 4$, $s\geq 0$, and $0\leq \gamma <\frac{(n-2)^2}{4}$. If the mean curvature at $0$ is negative, then  $\mu_{\gamma, s}(\Omega)$ is attained. 
\end{theorem}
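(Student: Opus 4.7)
The overall strategy is a compactness-via-energy argument in the spirit of Brezis--Nirenberg and of Ghoussoub--Kang. Namely, I would first show the strict inequality
\[
\mu_{\gamma,s}(\Omega) < \mu_{\gamma,s}(\rnp),
\]
and then invoke the Struwe-type decomposition sketched in the previous section (which extends from $\gamma=0$ to general $0\le\gamma<\frac{(n-2)^2}{4}$, since $L_\gamma$ is still coercive on $\huno$ in this range): any minimizing sequence either converges strongly to an extremal, or develops a bubble, in which case its energy is at least $\mu_{\gamma,s}(\rnp)^{\crits/(\crits-2)}$. The strict inequality rules out bubbles and forces strong convergence, providing the extremal.

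To prove the strict inequality I would build test functions from the ground state $U$ of $\mu_{\gamma,s}(\rnp)$, whose existence and asymptotic profile
\[
U(x)\sim K_1\frac{x_1}{|x|^{\am}}\ \text{ as }x\to 0,\quad U(x)\sim K_2\frac{x_1}{|x|^{\ap}}\ \text{ as }|x|\to\infty,
\]
are recalled earlier. Setting $U_\eps(x):=\eps^{-(n-2)/2}U(x/\eps)$, I straighten the boundary near $0$ via a diffeomorphism $\varphi:U\to V$ with $\varphi(0)=0$, $d\varphi_0=\mathrm{Id}$ and $\varphi(U\cap\rnp)=\varphi(U)\cap\Omega$, and choose a cut-off $\eta\in C_c^\infty(U)$ with $\eta\equiv 1$ near $0$. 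The test function is then $\tilde U_\eps(x):=\eta(\varphi^{-1}(x))\,U_\eps(\varphi^{-1}(x))$, extended by $0$ outside $\varphi(U)\cap\Omega$; this lies in $\huno$ precisely because the boundary is straightened and $U$ vanishes on $\partial\rnp$.

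The key computation is to expand $J^\Omega_{\gamma,s}(\tilde U_\eps)$ as $\eps\to 0$. The leading term is $\mu_{\gamma,s}(\rnp)$, coming from the scale invariance of the pulled-back energy and the fact that $U$ is an extremal on $\rnp$. The next-order correction comes from the Jacobian and metric coefficients of $\varphi$ in the gradient integral: expanding $d\varphi(0)$ to second order introduces the second fundamental form $II_0$ of $\partial\Omega$ at $0$, and after averaging in the tangential directions (using the symmetry $u(x_1,x')=v(x_1,|x'|)$ of $U$), one gets an expansion of the schematic form
\[
J^\Omega_{\gamma,s}(\tilde U_\eps)=\mu_{\gamma,s}(\rnp)+c_{n,s,\gamma}\,H_\Omega(0)\,\eps^{\theta}+o(\eps^{\theta}),
\]
with $c_{n,s,\gamma}>0$ given by a positive integral of $x_1|\nabla U|^2/|x|$ type (analogous to \eqref{30}), and some explicit exponent $\theta>0$ governed by the decay rates $\am,\ap$ and the dimension. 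When $H_\Omega(0)<0$ and $\eps$ is small, this yields $J^\Omega_{\gamma,s}(\tilde U_\eps)<\mu_{\gamma,s}(\rnp)$, hence the desired strict inequality.

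The main obstacle is the expansion itself, because $U$ is not explicit. One must integrate the boundary correction terms using only the two-sided asymptotic profile of $U$ at $0$ and at infinity, and verify that the offending integrals converge in the relevant dimension ranges. The condition $n\ge 4$ together with $0\le\gamma<\frac{(n-2)^2}{4}$ ensures $\ap-\am<n-2$ with enough room for $x_1|\nabla U|^2/|x|$ to be integrable at infinity and not too singular at the origin, so that the mean-curvature term is indeed the next-order term and dominates the remainder. Once this expansion is in hand, everything else — positivity of $c_{n,s,\gamma}$, applicability of Struwe's decomposition, and ruling out of bubbles — is routine, and the extremal for $\mu_{\gamma,s}(\Omega)$ is recovered.
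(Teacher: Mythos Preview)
Your strategy is exactly the paper's: prove $\mu_{\gamma,s}(\Omega)<\mu_{\gamma,s}(\rnp)$ by testing on pulled-back rescalings of the extremal $U$ for $\mu_{\gamma,s}(\rnp)$, then invoke the compactness threshold (the paper packages this as Theorem~\ref{tool} rather than Struwe's decomposition, but the content is the same). The expansion is precisely
\[
J^\Omega_{\gamma,s}(\tilde U_\eps)=\mu_{\gamma,s}(\rnp)\bigl(1+c_{\gamma,s}\,H_\Omega(0)\,\eps+o(\eps)\bigr),\qquad c_{\gamma,s}=\frac{\int_{\partial\rnp}|x'|^2(\partial_1 U)^2\,dx'}{2(n-1)\lambda\int_{\rnp}|U|^{\crits}|x|^{-s}\,dx},
\]
so $\theta=1$ and the coefficient is a \emph{boundary} integral of $|x'|^2(\partial_1 U)^2$, not a bulk integral of $x_1|\nabla U|^2/|x|$. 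One correction: the integrability condition you need is not $\ap-\am<n-2$ (which actually fails for $n=4,5,6$ in your range of $\gamma$) but rather $\ap>\tfrac{n+1}{2}$, equivalently $\gamma<\tfrac{n^2-1}{4}$; this is automatic here since $\tfrac{(n-2)^2}{4}<\tfrac{n^2-1}{4}$, and it is exactly what makes $|x'|^2(\partial_1 U)^2\sim|x'|^{2-2\ap}$ integrable on $\rr^{n-1}$.
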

The proof consists of testing the functional on minimizing sequences arising from  suitably truncated extremals of $\mu_{\gamma, s}(\rnp)$, whenever they are attained,
and showing that $\mu_{\gamma,s, \lambda}(\Omega)<\mu_{\gamma, s}(\rnp)$. 

In \cite{gr4} Ghoussoub-Robert consider the rest of the range left by Chern and Lin. 
In order to complete the picture, it was again important to know for which parameters $\gamma$ and $s$, the best constant $\mu_{\gamma, s}(\rnp)$ is attained. This is summarized in the following proposition, whose proof is given in section 5. 

\begin{proposition} Assume $\gamma <\frac{n^2}{4}$, where $n\geq 3$. Then, 
\begin{enumerate}
\item  $\mu_{\gamma, s}(\rnp)$ is attained if either $s>0$ or if $\{s=0$, $\gamma > 0$, and $n\geq 4\}$.
\item  On the other hand, if $s=0$ and $\gamma \leq 0$, then $\mu_{\gamma, s}(\rnp)$ is not attained.
\item The case when $s=0$, $\gamma >0$ and $n=3$ remains unsettled.  
\end{enumerate}
\end{proposition}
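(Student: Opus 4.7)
The plan is to prove all three claims by a concentration-compactness analysis on minimizing sequences, exploiting the full dilation invariance $u \mapsto \lambda^{(n-2)/2} u(\lambda\,\cdot\,)$ of the cone $\rnp$, which preserves the Rayleigh quotient $J^{\rnp}_{\gamma,s}$. When attainment fails, I will compute $\mu_{\gamma,0}(\rnp)$ explicitly by comparing with Aubin--Talenti bubbles pushed off to infinity.

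\medskip\noindent\textbf{Attainment in (1).} Let $(u_k) \subset D^{1,2}(\rnp)$ be minimizing for $\mu_{\gamma,s}(\rnp)$. Since $\gamma<n^2/4$ and $0\in\partial\rnp$, the boundary Hardy inequality makes $L_\gamma$ coercive, so $(u_k)$ is bounded in $D^{1,2}(\rnp)$. Using the dilation symmetry I normalize so that the weighted $L^{\crits}$--mass is concentrated at unit scale, namely
$$\int_{B_1(0)\cap\rnp}\frac{|u_k|^{\crits}}{|x|^s}\,dx=\tfrac12\int_{\rnp}\frac{|u_k|^{\crits}}{|x|^s}\,dx=\tfrac12.$$
This single normalization rules out vanishing, concentration at $0$, and escape to $\infty$. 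Let $u$ be the weak limit. The only remaining obstruction to strong convergence in $L^{\crits}(\rnp,|x|^{-s})$ is concentration at an interior point $x_0\in\overline{\rnp}\setminus\{0\}$: there the weight $|x|^{-s}$ is regular, the Hardy term becomes negligible for the concentrated part, and the profile is a standard Sobolev bubble. Sub-additivity then forces $\mu_{\gamma,s}(\rnp)\geq |x_0|^{2s/\crits}\mu_{0,0}(\rn)$. For $s>0$ this is ruled out by testing $J^{\rnp}_{\gamma,s}$ with a profile concentrating at $0$, where the singular weight strictly lowers the quotient; for $s=0,\gamma>0,n\geq 4$, a Brezis--Nirenberg-type expansion on a Dirichlet-adapted bubble at $0$ produces a Hardy gain of order $\eps^2$ which dominates the boundary cutoff loss (this is exactly where $n\geq 4$ enters), yielding $\mu_{\gamma,0}(\rnp)<\mu_{0,0}(\rn)$. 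Thus $u\not\equiv 0$ and is an extremal.

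\medskip\noindent\textbf{Non-attainment in (2).} I first establish $\mu_{\gamma,0}(\rnp)=\mu_{0,0}(\rn)$. The inequality $\geq$ is immediate since $-\gamma\geq 0$, while the reverse follows by testing $J^{\rnp}_{\gamma,0}$ on cut-off translates of the Aubin--Talenti profile centered at $x_0\in\rnp$ with $|x_0|\to+\infty$: the Sobolev quotient tends to $\mu_{0,0}(\rn)$, while $\int u^2/|x|^2\,dx\to 0$ by dominated convergence. Now suppose $u\in D^{1,2}(\rnp)$ were an extremal. Extending $u$ by zero to $\rn$, for $\gamma=0$ the extension would be an extremal for $\mu_{0,0}(\rn)$ on $\rn$ that vanishes on the hyperplane $\partial\rnp$, contradicting the explicit form \eqref{ext:00}, where all extremals are strictly positive. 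For $\gamma<0$, the quotient equals $\mu_{0,0}(\rn)$ only if the non-negative Hardy term $-\gamma\int u^2/|x|^2\,dx$ vanishes, forcing $u\equiv 0$, again a contradiction.

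\medskip\noindent\textbf{Main obstacle.} The hardest input is the strict inequality $\mu_{\gamma,0}(\rnp)<\mu_{0,0}(\rn)$ in case $s=0$, $\gamma>0$, and its dependence on the dimension. A careful expansion of $J^{\rnp}_{\gamma,0}$ on a truncated $\mu_{\gamma,0}(\rn)$-extremal shows that in dimension $n\geq 4$ the Hardy correction is of order $\eps^2$ and dominates the boundary cutoff error, while in dimension $n=3$ the gain is only logarithmic and competes with error terms of the same order whose sign is not automatically favorable. This is precisely the obstruction behind the open case (3); a positive answer there would presumably require a ``boundary mass'' analogous to the one introduced in Proposition \ref{interior.mass}, whose sign remains to be analyzed.
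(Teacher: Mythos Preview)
Your overall strategy---concentration--compactness on a minimizing sequence, normalized by the dilation invariance of the cone so that half the weighted mass sits in the unit ball---is exactly the one the paper uses (Theorem~\ref{th:ext:cone}). The non-attainment argument in (2) is also correct and matches the paper. However, two of your key steps in (1) do not hold as stated.

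\medskip\noindent\textbf{The case $s>0$.} You assert that if mass concentrates at some $x_0\in\overline{\rnp}\setminus\{0\}$, the profile is a ``standard Sobolev bubble'' and sub-additivity gives $\mu_{\gamma,s}(\rnp)\geq |x_0|^{2s/\crits}\mu_{0,0}(\rn)$. This is wrong: when $s>0$ the exponent $\crits$ is \emph{strictly subcritical}, so there is no $D^{1,2}$-profile on $\rn$ (or $\rnp$) solving the limiting equation at $x_0$, and no meaningful comparison with $\mu_{0,0}(\rn)$ comes out. The paper's argument is much simpler and direct: since $\crits<\crit$, the embedding $D^{1,2}\hookrightarrow L^{\crits}_{loc}$ is compact away from $0$, so $u_k\to 0$ strongly in $L^{\crits}$ on any neighborhood of $x_0\neq 0$, and a Dirac mass there is impossible. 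One concludes immediately that concentration at $x_0\neq 0$ forces $s=0$.

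\medskip\noindent\textbf{The case $s=0$, $\gamma>0$, $n\geq 4$.} Your test function---a ``Dirichlet-adapted bubble at $0$''---is both unclear and unnecessarily hard: centering at $0\in\partial\rnp$ forces you to deal simultaneously with the boundary condition and the singularity, and a truncated $\mu_{\gamma,0}(\rn)$-extremal already has the Hardy term built in, so there is no ``Hardy gain of order $\eps^2$'' to extract from it. The paper instead centers a standard Aubin--Talenti bubble at an \emph{interior} point $x_0\in\rnp\setminus\{0\}$, where the cutoff can be supported entirely in $\rnp$. Then the familiar expansion gives
\[
J_{\gamma,0}^{\rnp}(u_\eps)=K(n,2)^{-2}-\gamma|x_0|^{-2}c\,\theta_\eps+o(\theta_\eps),
\]
with $\theta_\eps=\eps^2$ for $n\geq 5$ and $\theta_\eps=\eps^2\ln\eps^{-1}$ for $n=4$; the cutoff error is $O(\eps^{n-2})$, absorbed into the remainder. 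This yields $\mu_{\gamma,0}(\rnp)<K(n,2)^{-2}$ and rules out the concentration alternative. For $n=3$ one has $\theta_\eps=\eps$ and the cutoff error is also $O(\eps)$, which is why the argument stalls---but the obstruction is the competition between these two \emph{interior} terms, not a boundary-mass phenomenon as you suggest.
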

\noindent Ghoussoub-Robert first noted that the proof of Chern-Lin extends directly to the case when $\gamma<\frac{n^2-1}{4}$. The limiting case when $\gamma=\frac{n^2-1}{4}$ is already quite more 
involved and requires precise information  on the profile of the extremal for $\mu_{\gamma, s}(\rnp)$. 

However, the case when $\gamma>\frac{n^2-1}{4}$ turned out to be more intricate. The ``local condition" of negative mean curvature at $0$ is not sufficient anymore to ensure extremals for $\mu_{\gamma, s}(\Omega)$. One requires a positivity condition on the {\it Hardy-singular boundary mass} of $\Omega$ defined below. This new ``global notion" associated with the operator $L_\gamma$ could be assigned to any smooth bounded domain $\Omega$ of $\rn$ with $0\in\partial\Omega$, as long as  
$\frac{n^2-1}{4}<\gamma <\frac{n^2}{4}$.

\begin{theorem}\label{def:mass:intro} {\rm (Ghoussoub-Robert \cite{gr4})} Assume $\Omega$ is a smooth bounded domain in $\rn$ with $0\in \partial \Omega$ in such a way that $\frac{n^2-1}{4}<\gamma<\gamma_H(\Omega)$, the latter being the best Hardy constant for the domain $\Omega$.  Then, up to multiplication by a positive constant, there exists a unique function $H\in C^2(\overline{\Omega}\setminus \{0\})$ such that
\begin{equation}\label{carac:H.0}
-\Delta H-\frac{\gamma}{|x|^2}H=0\hbox{ in }\Omega\; ,\; H>0\hbox{ in }\Omega\; ,\; H=0\hbox{ on }\partial\Omega.
\end{equation}
Moreover, there exists $c_1>0$ and $c_2\in \rr$ such that
\begin{equation*}
\hbox{$H(x)=c_1\frac{d(x,\partial\Omega)}{|x|^{\ap}}+c_2\frac{d(x,\partial\Omega)}{|x|^{\am}}+ o\left(\frac{d(x,\partial\Omega)}{|x|^{\am}}\right)$\qquad as $x\to 0$. }
\end{equation*}
The quantity $b_\gamma(\Omega):=\frac{c_2}{c_1}\in \rr$, which is independent of the choice of $H$ satisfying \eqref{carac:H.0}, will be referred to as the ``Hardy-singular boundary mass" of $\Omega$. 
\end{theorem}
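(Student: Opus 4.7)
The plan is to construct $H$ by a parametrix argument that starts from the singular ansatz $d(x,\partial\Omega)/|x|^{\ap}$ (the ``large'' solution of the model half-space problem), then refine the asymptotics near $0$ by matching with the half-space model, and finally deduce uniqueness from the coercivity of $L_\gamma$. The condition $\gamma > (n^2-1)/4$, equivalent to $\ap - \am < 1$, will play a decisive role in the asymptotic step.

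\smallskip\noindent\textbf{Existence and positivity.} Fix a cutoff $\chi \in C_c^\infty(\rn)$ with $\chi \equiv 1$ in a neighborhood of $0$, and define
\[
u_s(x) := \chi(x)\,\frac{d(x,\partial\Omega)}{|x|^{\ap}}.
\]
Since $x \mapsto x_1 |x|^{-\ap}$ is an exact solution of $L_\gamma u = 0$ on $\rnp \setminus \{0\}$, a Fermi-coordinate expansion of $\partial\Omega$ at $0$ shows that $f := L_\gamma u_s$ is either supported away from $0$ (via the cutoff) or comes from the curvature discrepancy between $d(x,\partial\Omega)$ and the distance to the tangent plane, and can be paired with test functions in $\huno$ in an appropriate weighted sense. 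The hypothesis $\gamma < \gamma_H(\Omega)$ lets me solve $L_\gamma w = -f$ with $w \in \huno$ via Lax--Milgram; setting $H := u_s + w$ produces a function in $C^2(\overline{\Omega} \setminus \{0\})$ with $L_\gamma H = 0$ in $\Omega$ and $H = 0$ on $\partial\Omega \setminus \{0\}$. Positivity follows from the strong maximum principle applied on $\Omega \setminus \overline{B_\delta(0)}$ for small $\delta>0$, together with the fact that $H(x) \to +\infty$ as $x \to 0$ by construction.

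\smallskip\noindent\textbf{Asymptotic expansion.} By construction the leading coefficient is $c_1 = 1$. To extract $c_2$, I rescale $H_R(x) := R^{\ap - 1} H(Rx)$ and pass to the limit $R \to 0$ in $C^2_{\mathrm{loc}}(\overline{\rnp} \setminus \{0\})$: the limit solves $L_\gamma u = 0$ on $\rnp \setminus \{0\}$ with $u = 0$ on $\partial \rnp \setminus \{0\}$, and a separation of variables on the upper hemisphere (whose first Dirichlet eigenfunction is $\omega \mapsto \omega_1$) shows that its first-angular-mode content is exactly a linear combination of $x_1/|x|^{\ap}$ and $x_1/|x|^{\am}$. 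Higher spherical modes on the upper hemisphere have indicial exponents of size $\frac{n-2}{2} \pm \sqrt{(k+\frac{n-2}{2})^2 - \gamma}$ for $k \ge 2$, and are driven only by higher-order Taylor data of $\partial\Omega$ at $0$; the inequality $\ap - \am < 1$ is precisely what ensures that, after combining with these Taylor factors, no such mode interferes at the order $|x|^{-\am}$. A matched asymptotic-expansion argument then produces a unique coefficient $c_2 \in \rr$ with
\[
H(x) = c_1\,\frac{d(x,\partial\Omega)}{|x|^{\ap}} + c_2\,\frac{d(x,\partial\Omega)}{|x|^{\am}} + o\!\left(\frac{d(x,\partial\Omega)}{|x|^{\am}}\right) \quad \text{as } x \to 0.
\]

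\smallskip\noindent\textbf{Uniqueness.} Given two solutions $H_1, H_2$ with leading coefficients $c_1^{(1)}, c_1^{(2)} > 0$, set $\tilde H := c_1^{(2)} H_1 - c_1^{(1)} H_2$. Then $L_\gamma \tilde H = 0$, $\tilde H$ vanishes on $\partial\Omega \setminus \{0\}$, and the expansion gives $\tilde H(x) = O(d(x,\partial\Omega)/|x|^{\am})$ as $x \to 0$. Since $\am < n/2$, a direct computation shows $|\nabla \tilde H|^2 \in L^1(\Omega)$, so $\tilde H \in \huno$. Testing the equation against $\tilde H$ and invoking coercivity forces $\tilde H \equiv 0$, so $H_2 = (c_1^{(2)}/c_1^{(1)}) H_1$.

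\smallskip\noindent\textbf{Main obstacle.} The delicate step is the refined expansion: converting the energy-level control on the correction $w$ into a pointwise decomposition with the sharp remainder $o(d(x,\partial\Omega)/|x|^{\am})$ demands precise Green function asymptotics for $L_\gamma$ on $\rnp$ (near $0$ and near infinity) and a careful mode-by-mode analysis on the upper hemisphere, genuinely exploiting the spectral gap granted by $\gamma > (n^2-1)/4$; without this gap, second-mode contributions from the curvature of $\partial\Omega$ would compete with the desired $|x|^{-\am}$ term and spoil the unambiguous definition of $c_2$.
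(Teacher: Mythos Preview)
Your existence and uniqueness arguments are essentially correct and align with the paper's framework; note only that positivity tacitly uses the fact that the variational correction $w\in\huno$ satisfies $w=O(d(x,\partial\Omega)|x|^{-\am})$ near $0$---this is exactly the generalized Hopf lemma stated later in the paper, so you should invoke it rather than leave it implicit.

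The genuine gap is in the asymptotic step. The rescaling $H_R(x)=R^{\ap-1}H(Rx)$ normalises the leading singularity, so any $d/|x|^{\am}$ contribution in $H$ is multiplied by $R^{\ap-\am}\to 0$ and disappears in the limit; the blow-up profile on $\rnp$ is simply $c_1\, x_1|x|^{-\ap}$, with no trace of $c_2$. A single blow-up cannot read off a subleading coefficient. The paper's route (with details deferred to \cite{gr4}) is a two-pass argument: first, the classification of positive singular solutions gives $H\sim c_1\, d/|x|^{\ap}$; second, one subtracts a cut-off of this leading profile, checks that the remainder $\beta:=H-c_1(\eta\, x_1|x|^{-\ap})\circ\varphi^{-1}$ is variational, and applies the generalized Hopf lemma to $\beta$ to obtain $\beta\sim c_2\,d/|x|^{\am}$. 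The hypothesis $\ap-\am<1$ is what guarantees that the curvature-driven error in the subtraction, of order $d/|x|^{\ap-1}$, is $o(d/|x|^{\am})$ and so does not contaminate $c_2$. Your spherical-mode heuristic is a legitimate alternative route, but it has to be implemented as an ODE for the first angular coefficient $h_1(r)=\int_{S^{n-1}_+}H(r\omega)\,\omega_1\,d\sigma(\omega)$ with a curvature-driven inhomogeneity, solved by variation of parameters; the sentence ``a matched asymptotic-expansion argument then produces $c_2$'' is where all the content lives, and as written it is a placeholder rather than an argument.
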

 
One can then complete the picture as follows. 

\begin{table}[ht]
\begin{center}
\caption{Case where either $s>0$ or $\{s=0$, $\gamma > 0$, and $n\geq 4\}$.}

\vspace{5mm}
\begin{tabular}{|c|c|c|c|c|} \hline
{\bf  Hardy term }& {\bf Singularity}& {\bf Dim. }&{\bf Geometric condition} &{\bf  Extremal}\\ \hline\hline
$-\infty < \gamma \leq \frac{n^2-1}{4}$ & $s>0$&$n\geq 3$
  & $H_\Omega(0)<0$ & Yes\\  \hline
 $\frac{n^2-1}{4} <\gamma <\frac{n^2}{4}$&$s>0$&$n\geq 3$ & $b_\gamma(\Omega)>0$ & Yes\\ \hline
 $0 < \gamma \leq \frac{n^2-1}{4}$  &$s=0$& $n\geq 4$
  & $H_\Omega(0)<0$ & Yes\\  \hline
 $\frac{n^2-1}{4} <\gamma <\frac{n^2}{4}$&$s=0$&$n\geq 4$ & $b_\gamma(\Omega)>0$ & Yes\\
  \hline
\end{tabular}
\end{center}
\end{table}

\begin{table}[ht]
\begin{center}
\caption{$s=0$ and the remaining cases.}

\vspace{5mm}
\begin{tabular}{|c|c|c|c|c|} \hline
{\bf  Hardy term }& {\bf Singularity}& {\bf Dim. }&{\bf Geometric condition} &{\bf  Extremal}\\ \hline\hline
$\gamma \leq 0$ & $s=0$&$n\geq 3$
  & -- & No\\  \hline
 $0<\gamma \leq 2$&$s=0$&$n= 3$ &$H_\Omega(0)<0$ and $R_{\gamma, 0}(\Omega)>0$& Yes\\ \hline
 $2< \gamma <\frac{9}{4}$  &$s=0$& $n=3$
  & $b_\gamma(\Omega)>0$ and $R_{\gamma, 0}(\Omega)>0$ & Yes\\ 
  \hline
\end{tabular}
\end{center}
\end{table}
The following theorem summarizes the various situations

\begin{theorem} \label{main.boundary} Let $\Omega$ be a smooth bounded domain in $\rn$ ($n\geq 3$) such that $0\in \partial \Omega$ and let $0\leq s < 2$ and $\gamma <\frac{n^2}{4}$. 
\begin{enumerate}
\item If $s=0$ and $\gamma\leq 0$, then $\mu_{\gamma,s}(\Omega)=\mu_{0,0}(\rn)$
and there is no extremal for $\mu_{\gamma,s}(\Omega)$.

\item If either  $s>0$ or \{$s=0$, $\gamma>0$, $n\geq 4$\}, then there are extremals for $\mu_{\gamma,s}(\Omega)$ under one of the following two conditions:
\begin{enumerate} 
\item  $\gamma\leq\frac{n^2-1}{4}$ and the mean curvature of $\partial \Omega$ at $0$ is negative. 
\item $\gamma>\frac{n^2-1}{4}$ and the Hardy boundary-mass $b_\gamma (\Omega)$ of $\Omega$ is positive. 
\end{enumerate} 
\item If $s=0$, $n=3$, $\gamma>0$ and the internal mass $R_{\gamma, 0}(\Omega, x_0)$ is positive for some $x_0\in \Omega$, then there are extremals for $\mu_{\gamma,s}(\Omega)$ under one of the following two conditions:
\begin{enumerate} 
\item  $\gamma\leq 2$ and the mean curvature of $\partial \Omega$ at $0$ is negative. 
\item $\gamma >2$ and the Hardy boundary-mass $b_\gamma (\Omega)$ of $\Omega$ is positive. 
\end{enumerate} 

\end{enumerate}
\end{theorem}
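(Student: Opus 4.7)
The overall plan is to reduce each existence statement to the compactness-restoring strict inequality $\mu_{\gamma,s}(\Omega)<\mu_{\gamma,s}(\rnp)$, supplemented in part (3) by the interior inequality $\mu_{\gamma,s}(\Omega)<\mu_{0,0}(\rn)$ that rules out interior bubbles; once such inequalities are in place, extremals are produced by the blow-up/Struwe-type decomposition sketched above for the case $\gamma=0$. For \emph{part (1)}, where $s=0$ and $\gamma\leq 0$, the non-existence comes essentially for free: standard Aubin--Talenti bubbles concentrated at an interior point $x_0\in\Omega$ give $\mu_{\gamma,0}(\Omega)\leq\mu_{0,0}(\rn)$, since $|x-x_0|^{-2}$ is bounded there and the Hardy term contributes at subcritical order, while $-\gamma u^2/|x|^2\geq 0$ on $\Omega$ gives the reverse inequality. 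Equality forces any extremal to coincide with an Aubin--Talenti function \eqref{ext:00}, which cannot lie in $\huno$.

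For \emph{part (2)(a)}, with $\gamma\leq (n^2-1)/4$ and $H_\Omega(0)<0$, I would take an extremal $U$ for $\mu_{\gamma,s}(\rnp)$, which exists by the attainability proposition above, straighten $\partial\Omega$ near $0$ by a boundary chart $\varphi$, and test $J^\Omega_{\gamma,s}$ on $\eta U_\eps(x)=\eta(x)\eps^{-(n-2)/2}U(\varphi^{-1}(x)/\eps)$. Exploiting the profile $U(x)\sim K_1 x_1/|x|^{\am}$ near $0$ and $U(x)\sim K_2 x_1/|x|^{\ap}$ at infinity, a direct expansion yields
\begin{equation*}
J^\Omega_{\gamma,s}(\eta U_\eps)=\mu_{\gamma,s}(\rnp)+c_n H_\Omega(0)\,\eps+o(\eps)
\end{equation*}
for some $c_n>0$. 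Since $\ap-\am=2\sqrt{n^2/4-\gamma}$, the hypothesis $\gamma\leq (n^2-1)/4$ is exactly $\ap-\am\geq 1$, which is the integrability threshold ensuring convergence of the surface integral $\int_{\partial\rnp}|x|^2|\nabla U|^2\,d\sigma$ controlling $c_n$. The borderline case $\gamma=(n^2-1)/4$ requires a logarithmic correction in the expansion and is the most delicate sub-step.

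For \emph{part (2)(b)}, with $\gamma>(n^2-1)/4$ and $b_\gamma(\Omega)>0$, the previous expansion breaks down because the curvature integral diverges. One instead corrects $\eta U_\eps$ by a suitably scaled multiple of the Hardy-singular boundary mass function $H$ from Theorem \ref{def:mass:intro}. Using the two-term expansion of $H$ near $0$, a Pohozaev-style cancellation between the cross terms produces a test function $v_\eps$ with
\begin{equation*}
J^\Omega_{\gamma,s}(v_\eps)=\mu_{\gamma,s}(\rnp)-C\,b_\gamma(\Omega)\,\eps^{\ap-\am}+o(\eps^{\ap-\am})
\end{equation*}
for some $C>0$. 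This is the \emph{main obstacle} of the proof: the cancellation demands precise matching between the profile constants $K_1,K_2$ of $U$ and the mass coefficients $c_1,c_2$ defining $b_\gamma(\Omega)$, and crucially uses the fact that both $x_1|x|^{-\am}$ and $x_1|x|^{-\ap}$ annihilate $L_\gamma$ on $\rn\setminus\{0\}$.

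For \emph{part (3)}, where $s=0$, $n=3$, $\gamma>0$, note first that $(n^2-1)/4=2$ accounts for the threshold $\gamma=2$ between cases (a) and (b). Boundary concentration is handled as in part (2), but one must simultaneously prevent interior Aubin--Talenti concentration, since $2^*(0)=6$ and there is no singular weight; the positivity of the internal mass $R_{\gamma,0}(\Omega,x_0)$ is exploited in a Schoen-style test function built from the Green function of $L_\gamma$ at $x_0$ to yield $\mu_{\gamma,0}(\Omega)<\mu_{0,0}(\rn)$. Combined with $\mu_{\gamma,0}(\Omega)<\mu_{\gamma,0}(\rnp)$, both concentration profiles are excluded in the Struwe decomposition, producing the desired extremal.
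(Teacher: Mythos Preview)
Your treatment of parts (1) and (2) is correct and matches the paper's approach closely: the Aubin--Talenti argument for (1), the truncated-and-straightened extremal $U_\eps$ of $\mu_{\gamma,s}(\rnp)$ for (2)(a) with the curvature expansion at order $\eps$ (and the logarithmic correction at $\gamma=\tfrac{n^2-1}{4}$), and the mass-corrected test function $v_\eps=u_\eps+\eps^{(\ap-\am)/2}\beta$ for (2)(b) yielding the $-C\,b_\gamma(\Omega)\,\eps^{\ap-\am}$ term. All of this is essentially the paper's argument.

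Your reasoning for part (3), however, misidentifies the role of the interior mass hypothesis. You claim that both $\mu_{\gamma,0}(\Omega)<\mu_{\gamma,0}(\rnp)$ and $\mu_{\gamma,0}(\Omega)<\mu_{0,0}(\rn)$ are needed to exclude, respectively, boundary and interior bubbles in a Struwe decomposition. But since $\mu_{\gamma,0}(\rnp)\leq\mu_{0,0}(\rn)$ always holds (Remark~\ref{used}), the single inequality $\mu_{\gamma,0}(\Omega)<\mu_{\gamma,0}(\rnp)$ already suffices for compactness by Theorem~\ref{tool}; the second inequality is redundant, not complementary. The actual obstruction is that for $n=3$, $s=0$, $\gamma>0$ it is \emph{unknown} whether $\mu_{\gamma,0}(\rnp)$ is attained (see the remark after Theorem~\ref{th:ext:cone}), and without an extremal $U$ on $\rnp$ you cannot build the boundary test functions of part (2) at all. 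The paper instead argues by dichotomy: either $\mu_{\gamma,0}(\rnp)$ is attained, in which case the boundary test functions under condition (a) or (b) give $\mu_{\gamma,0}(\Omega)<\mu_{\gamma,0}(\rnp)$; or it is not attained, in which case Theorem~\ref{th:ext:cone} forces $\mu_{\gamma,0}(\rnp)=\mu_{0,0}(\rn)$, and the Schoen-type interior test function built from the Robin function at $x_0$ with $R_{\gamma,0}(\Omega,x_0)>0$ gives $\mu_{\gamma,0}(\Omega)<\mu_{0,0}(\rn)=\mu_{\gamma,0}(\rnp)$. In either branch one lands below $\mu_{\gamma,0}(\rnp)$ and Theorem~\ref{tool} applies. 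The two hypotheses in part (3) are thus insurance against the two sides of an unresolved alternative, not a two-front battle against distinct bubble profiles.
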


Here are some of the remarkable properties of the Hardy-singular boundary mass. 
\begin{itemize}
\item The map $\Omega \to b_\gamma(\Omega)$ is a monotone increasing function on the class of domains having zero on their boundary, once ordered by inclusion.

\item One can also define the mass of unbounded sets as long as they can be ``inverted" via a Kelvin transform into a smooth bounded domain.  
For example,  $b_\gamma(\rnp)=0$ for any $\frac{n^2-1}{4}<\gamma<\frac{n^2}{4}$, and therefore the mass of any one of its subsets having zero on its boundary is non-positive. In particular, $b_\gamma(\Omega)<0$ whenever $\Omega$ is convex and $0\in \partial \Omega$.

\item  There are also examples of bounded domains $\Omega$ in $\rn$ with $0\in \partial \Omega$ that have positive Hardy-singular boundary mass. Actually these domains can be  {\it locally} strictly convex at $0$.

\item On the other hand, there are also examples of domains $\Omega$ with negative principal curvatures at $0$, but with negative Hardy-singular boundary mass. 
\end{itemize} 
 In other words, the sign of the Hardy-singular boundary mass can be totally independent of the local properties of $\partial\Omega$ around $0$, as illustrated by the following result.

\begin{proposition}\label{prop:ex:any:behave} {\rm (Ghoussoub-Robert \cite{gr4})} Let $\omega$ be a smooth open set of $\rn$ such that $0\in \partial \omega$. Then, there exist  two smooth bounded domains $\Omega_+,\Omega_-$ of $\rn$ with Hardy constants $>\frac{n^2-1}{4}$, and there exists $r_0>0$  such that
$$\Omega_+\cap B_{r_0}(0)=\Omega_-\cap B_{r_0}(0)=\omega\cap B_{r_0}(0),$$
and 
$$b_\gamma(\Omega_+)>0>b_\gamma(\Omega_-),$$
for any $\gamma \in (\frac{n^2-1}{4},\min\{\gamma_H(\Omega_+),\gamma_H(\Omega_-)\})$.\end{proposition}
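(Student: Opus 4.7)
The plan is to exploit the fact that the Hardy-singular boundary mass $b_\gamma(\Omega)=c_2/c_1$ decouples into a local factor (the leading coefficient $c_1$ is pinned down by the second fundamental form of $\partial\Omega$ at $0$) and a global factor (the subleading coefficient $c_2$ responds to the Dirichlet condition on all of $\partial\Omega$). Since every admissible domain must share the same local piece $\omega\cap B_{r_0}(0)$, $c_1$ will be fixed once and for all, and the sign of $b_\gamma$ will be controlled through the far-field geometry. Choose coordinates in which $\omega=\{x_1>\phi(x')\}$ locally with $\phi(0)=0=\nabla\phi(0)$, and set $A:=\omega\cap B_{r_0}(0)$ for $r_0>0$ to be fixed later. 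Each candidate $\Omega_\pm$ will be built by gluing $A$ inside $B_{r_0}$ to a smooth bounded exterior piece supported in $\rn\setminus\overline{B_{r_0/2}(0)}$, with a smooth transition in the annulus $B_{r_0}\setminus\overline{B_{r_0/2}}$.

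For $\Omega_-$ I take as a benchmark a smooth bounded $\Omega^{\sharp}_-\subsetneq\rnp$ with $0\in\partial\Omega^{\sharp}_-$, for instance a half-ball. Since $b_\gamma(\rnp)=0$ and $b_\gamma$ is strictly monotone under proper inclusion (one of the properties listed after Theorem~\ref{def:mass:intro}), $b_\gamma(\Omega^{\sharp}_-)<0$. For $\Omega_+$ I invoke the existence, established in \cite{gr4}, of a smooth bounded domain $\Omega^{\sharp}_+$, which can even be locally strictly convex at $0$, with $b_\gamma(\Omega^{\sharp}_+)>0$. I then define $\Omega_\pm$ by excising $\Omega^{\sharp}_\pm\cap B_{r_0}$ and inserting $A$ through a smooth interpolation in the annulus. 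A preliminary local diffeomorphism supported near $0$ normalizes the second fundamental form of $\partial\Omega^{\sharp}_\pm$ at $0$ to coincide with that of $\partial\omega$ at $0$; this perturbs $b_\gamma$ by a continuous amount that is harmless once the normalization is taken small.

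The main obstacle is the stability estimate that underwrites this surgery: if two smooth bounded domains $\Omega$ and $\Omega^{\sharp}$ agree outside $B_{r_0}$ and have the same second fundamental form at $0$, then $b_\gamma(\Omega)\to b_\gamma(\Omega^{\sharp})$ as $r_0\to 0$. The proof requires comparing the unique positive solutions $H_\Omega$ and $H_{\Omega^{\sharp}}$ of $-\Delta H-\gamma H/|x|^2=0$ with zero Dirichlet data on the respective boundaries, via the maximum principle and the Hopf lemma on the common region $\Omega\cap\Omega^{\sharp}$, and then matching their asymptotic expansions $c_1\, d(x,\partial\cdot)/|x|^{\ap}+c_2\, d(x,\partial\cdot)/|x|^{\am}+o(d(x,\partial\cdot)/|x|^{\am})$. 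Because $c_1$ is determined by the common local data, the analysis reduces to controlling the difference of the $c_2$'s, which one expects to be of order $r_0^{\ap-\am}$ by the structure of the singular expansion and a separation-of-variables argument on the model half-space cone. Taking $r_0$ small in each of the two constructions then forces $b_\gamma(\Omega_\pm)$ to share the sign of $b_\gamma(\Omega^{\sharp}_\pm)$, yielding the desired pair with $b_\gamma(\Omega_+)>0>b_\gamma(\Omega_-)$.
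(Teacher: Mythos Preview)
Your outline rests on two assertions that do not hold up. First, the ``decoupling'' of $b_\gamma(\Omega)=c_2/c_1$ into a local $c_1$ and a global $c_2$ is a misreading of Theorem~\ref{def:mass:intro}: the solution $H$ is only unique up to a positive multiplicative constant, so neither $c_1$ nor $c_2$ is intrinsic---only their ratio is. There is no sense in which ``$c_1$ is pinned down by the second fundamental form at $0$.'' Second, and more seriously, the stability step you call ``the main obstacle'' is exactly where the argument breaks. You are performing surgery in $B_{r_0}(0)$, i.e.\ precisely at the point where the asymptotic expansion defining $b_\gamma$ is read off; there is no a~priori reason that a modification confined to a shrinking neighbourhood of the singularity should have vanishing effect on a quantity defined through that singularity. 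Your justification (``maximum principle, Hopf lemma, separation of variables, expect $O(r_0^{\ap-\am})$'') is a wish list, not an estimate. The preliminary diffeomorphism you propose to match second fundamental forms compounds the problem: the operator $-\Delta-\gamma|x|^{-2}$ is not invariant under general diffeomorphisms fixing $0$, so pushing $\Omega^{\sharp}_\pm$ through such a map does not give a domain whose mass is comparable to $b_\gamma(\Omega^{\sharp}_\pm)$. Finally, invoking a bounded $\Omega^{\sharp}_+$ with positive mass ``from \cite{gr4}'' is close to assuming what the proposition sets out to prove.

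The route taken in \cite{gr4}, and signposted in Section~\ref{sec:ex:mass} here, is different: one works with \emph{conformally bounded} (generally unbounded) domains, uses the Kelvin-type inversion $i_{x_0}$ to pass between bounded and unbounded models while keeping the germ at $0$ under control, and then appeals to Proposition~\ref{prop:ex:pos:unbounded} (strict positivity/negativity of the mass according to whether the domain strictly contains or is contained in $\rnp$) together with the strict monotonicity of $b_\gamma$ under inclusion. The point is that the sign of the mass is forced by a \emph{global} inclusion relation with $\rnp$ rather than by a perturbative stability estimate near $0$; the local piece $\omega\cap B_{r_0}(0)$ is accommodated by choosing the far-field so that the required inclusion holds. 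If you want to salvage your approach, you would have to prove a genuine continuity theorem for $b_\gamma$ under $C^2$-domain perturbations concentrated at $0$, which is substantially harder than the monotonicity route.
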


The above analysis also leads to the following definition of another {\it critical dimension for the operator $L_\gamma$}, which concerns domains having $0$ on their boundary. It is the largest scalar $\bar n_\gamma$ such that for every $n < \bar n_\gamma$, there exists a bounded smooth domain $\Omega \subset \rn$ with $0\in \partial \Omega$ and with negative mean curvature at $0$ such that $\mu_{\gamma, s}(\Omega)$ is not attained.

\noindent{\bf Problem 3:} An interesting question is to verify that if $0\in \partial \Omega$, then the critical dimension for $L_\gamma$ is given by the formula
\begin{equation}
\bar n_\gamma =\left\{\begin{array}{ll} \sqrt{4\gamma +1}\quad & \, {\rm if}\, \gamma \geq 0\\
4 \quad  & \, {\rm if}\,  \gamma <0.
\end{array}\right.
\end{equation}

Note that the above results yield that $\bar n_\gamma \leq \sqrt{4\gamma +1}$ and that   $n < \sqrt{4\gamma +1}$ corresponds to when $\ap-\am < 1$, which is the threshold where the radial function $x\to =|x|^{1-\alpha_+(\gamma)}$ is in $L^2(\partial \rnp)$.

\part{Caffarelli-Kohn-Nirenberg inequalities on $\rn$ and $\rnp$}

\section{Inequalities of Hardy, Sobolev, and Caffarelli-Kohn-Nirenberg}

We start by deriving these inequalities and show how they are interrelated. \\

\noindent{\bf The Hardy inequality:} 
It states that 
\begin{equation}\label{ineq:hardy.0}
\hbox{$\frac{(n-2)^2}{4}\int_{\rn}\frac{u^2}{|x|^2}\, dx\leq \int_{\rn}|\nabla u|^2\, dx$\quad for all $u\in C^\infty_c(\rn)$,}
\end{equation}
which also yields that  $\mu_{0, 2}(\Omega) \geq \frac{(n-2)^2}{4}$ for all $\Omega \subset \rn$, and that  $\mu_{\gamma, s}(\Omega) \geq 0$ for all $\gamma \leq \frac{(n-2)^2}{4}$.  An elementary proof of this inequality goes as follows: 

Associate to any smooth radial positive functions $u \in C^2_c(B_R)$, where $B_R$ is the ball of radius $R$ in $\rn$ the function $v(r)=u(r)r^{(n-2)/2}$ where $r=|x|$. Denoting $\omega_{n-1}$ the volume of the unit sphere, one can estimate the quantity 
\[
I(u):=\int_{\Omega}|\nabla u|^{2}dx -( \frac{n-2}{2})^{2} 
\int_{\Omega}\frac{u^{2}}{|x|^{2}}dx,
\]
 as follows:
\begin{eqnarray*}
I(u)&=&\omega_{n-1}\int^{R}_{0}|\frac{n-2}{2}r^{-n/2}v(r)-r^{1-n/
2}v'(r)|^{2}r^{n-1}d
r
-( \frac{n-2}{2})^{2}\omega_{n-1}\int^{R}_{0}\frac{v^{2}(r)}{r}dr \\
&=&\omega_{n-1}( 
\frac{n-2}{2})^{2}\int^{R}_{0}v^{2}\big[(1-\frac{2v'(r)r}{(n-2)v(r)})^{2}-1\big]\frac{dr}{r}\\
&=&\omega_{n-1}\int^{R}_{0} v'(r)^{2}r\, dr-\omega_{n-1}( \frac{n-2}{2})\int^{R}_{0}v(r)v'(r)dr \\
&=&\omega_{n-1}\int^{R}_{0} v'(r)^{2}r\, dr, 
\end{eqnarray*}
which is obviously non-negative. 

If now $u$ is a non-radial  function  on general domain $\Omega$,  we consider its symmetric decreasing rearrangement $u^*$, defined by
 \[
u^*(x)=\int_0^{+\infty} \chi^*_{\{|u|>t\}}(x)\, dt,
\]
where for a general set $A \subset \rn$, we denote by $\chi^*_A$ the characteristic function of
a ball of volume $|A|$ centered at the origin. the function  $u^*$ is then symmetric-decreasing, and satisfies $\|\frac{u^*}{|x|}\|_p\geq \|\frac{u}{|x|}\|_p$ for any $p$, since the rearrangement does not change the values of $u$, while only changing the places where these values occur.  What is less obvious is that 
\begin{equation}
\int_\Omega |\nabla u^*|^2\, dx \leq \int_\Omega |\nabla u|^2\, dx, 
\end{equation} 
a proof of which can be found in \cite{17baernstein-unified}.

 Let now $B_{R}$ be a ball 
having the same volume as $\Omega$ with $R=(|\Omega|/\omega_{n})^{1/n}$. If $u \in 
H_{0}^{1}(\Omega)$, then $u^{*} \in H_{0}^{1}(B_{R})$, has the same $L^{p}$-norm as $u$, while  decreasing the Dirichlet energy. Hence, 
(\ref{ineq:hardy.0}) holds for every $u \in H^{1}_{0}(\Omega)$.

To see that
$$\gamma_H(\Omega):=\inf\left\{\frac{\int_{\Omega}|\nabla u|^2\, dx}{\int_{\Omega}\frac{u^2}{|x|^2}\, dx};\,  u\in D^{1,2}(\Omega)\setminus\{0\}\right\}$$
is not achieved, if the singularity $0$ belongs to the interior of $\Omega$, 
assume that $u\geq 0$ is a weak solution of the corresponding Euler-Lagrange equation.
 \begin{equation*}\label{att.equ}
 \begin{cases}
\Delta u+\left( \frac{n-2}{2}\right)^{2}\frac{u}{|x|^{2}}&=0 \ \ \ \ {\rm in} \ \  
\Omega, \\
\hfill u&>0 \ \ \ \ {\rm in} \ \   \Omega \setminus \{0\}, \\
\hfill u&=0 \ \quad {\rm in} \ \  \partial \Omega.
\end{cases}
\end{equation*}
 By standard
elliptic regularity we know that $u \in C^{2,\alpha}_{loc}(\Omega
\setminus \{0\})$. Since $0\in \Omega$, we can assume that the unit ball $B_1$ is contained in $\Omega$.
The function 
\[v(r)=\frac{1}{n\omega_{n-1}r^{n-1}}\int_{\partial B_r}u(x)dS=\frac{1}{n \omega_{n-1}}\int_{|\sigma|=1} u(r\sigma)d\sigma,\]
then satisfies,
\begin{equation*}\label{att.ode}
v''(r)+\frac{n-1}{r}v'(r)+\frac{(\frac{n-2}{2})^2}{r^2}v(r)=0.
\ \ \ \ 0<r\leq1,
\end{equation*}
Hence the function 
$w(r)=r^{(n-2)/2}v(r)>0$ for $r>0$, satisfies
$(rw')'= 0$ for $0<r\leq 1$, and therefore $w'(r)=\frac{C}{r}$ for some constant $C>0$ and $w(r)=C\ln (r)+D$. 
On the other hand, the Sobolev inequality yields that  if $u\in H^1_0(\Omega)$, then $u \in L^{2n/(n-2)}(B_1)$ and
$\liminf\limits_{r\downarrow 0}w(r)=0$, which would lead to a contradiction.

More recently, it was observed by Brezis-Vasquez \cite{BV} and others \cite{FT} that the inequality \label{cl-hardy} can be improved.
The story here is the link --discovered by Ghoussoub-Moradifam \cites{gm0, gm}-- between various improvements of this inequality confined to bounded domains and Sturm's theory regarding the oscillatory behavior of certain linear ordinary equations.

Following Ghoussoub-Moradifam \cite{gm}, we say that a non-negative  $C^1$-function $P$ defined on an interval $(0, R)$ is a {\it Hardy Improving Potential} (abbreviated as {\it HI-potential}) 
if the  following  improved Hardy inequality holds on every domain $\Omega$ contained in a ball of radius $R$: 
\begin{equation}\label{gen-hardy.0}
\hbox{$\int_{\Omega}|\nabla u |^{2}dx - ( \frac{n-2}{2})^{2} 
\int_{\Omega}\frac{u^{2}}{|x|^{2}}dx\geq \int_{\Omega} P(|x|)u^{2}dx$ \quad for $u \in 
H^{1}_{0}(\Omega)$.}
\end{equation}
It turned out that a necessary and sufficient condition for $P$ to be an {\it HI-potential} on a ball $B_R$,  is for the following ordinary differential equation associated to $P$ 
\begin{equation}\label{}
y''+\frac{1}{r}y'+P(r)y=0,
\end{equation}
to have a positive solution on the interval $(0, R)$. Elementary examples of HI-potentials are:
\begin{itemize}
\item  $P \equiv 0$ on any interval $(0,  R)$; 
\item $P\equiv 1$ on $(0, z_0)$, where $z_{0}=2.4048...$ is the first root of the Bessel function $J_0$;
\item More generally,  $P (r)=r^{-a}$ with $0\leq a<2$ on $(0, z_a)$, where  $z_{a}$ is the first root of the largest solution of the equation $y''+\frac{1}{r}y'+r^{-a}y=0$. 
\item  $P_\rho(r)=\frac{1}{4r^{2}(log\frac{\rho}{r})^{2}}$ on $(0, \frac{\rho}{e})$;
\item  $P_{k, \rho}(r)=\frac{1}{r^{2}}\sum\limits_{j=1}^k\big(\prod^{j}_{i=1}log^{(i)}\frac{\rho}{r}\big)^{-2}$ on $(0, \frac{\rho}{e^{e^{e^{.^{.^{e(k-times)}}}}}} )$.
\end{itemize}
This connection to the oscillatory theory of ODEs leads to a large supply of explicit Hardy improving potentials. One can show for instance that there is no $c>0$ for which $P(r)=cr^{-2}$ is an $HI$-potential, which means that $\frac{(n-2)^2}{4}$ is the best constant for $\gamma_H(\Omega)$. 
 
Actually, the value of the following best constant 
\begin{equation}\mu_{0,2}(P, \Omega):= \inf_
{\genfrac{}{}{0pt}{}{\scriptstyle{u\in
H^1_0(\Omega)}}{\scriptstyle{u\neq 0}}}
~\frac{\displaystyle\int_{\Omega}|\nabla u|^2~dx-\int_{\Omega}P(|x|)u^2~dx}
{\displaystyle \int_{\Omega}|x|^{-2}|u|^2~dx}
\end{equation}
is still equal to  $\frac{(n-2)^2}{4}$, and is never attained in $H^1_0(\Omega)$, whenever $\Omega$ contains $0$ in its interior. \\

\noindent{\bf The Hardy-Sobolev inequalities:} The basic Sobolev inequality states that there exists a constant $C(n)>0$, such that 
\begin{equation}\label{ineq:sobo}
\hbox{$\left(\int_{\rn}|u|^{\frac{2n}{n-2}}\, dx\right)^{\frac{n-2}{n}}\leq C(n)\int_{\rn}|\nabla u|^2\, dx$ \quad for all $u\in C^\infty_c(\rn)$,   }
\end{equation}
in such a way that $\mu_{0, 0} (\Omega)>0$ for every $\Omega \subset \rn$.   Actually, the Sobolev inequality can be derived from Hardy's except for the value of the best constant, which we will discuss later. We first derive the inequality for radial decreasing functions. The general case follows  from the properties of symmetric rearrangements noted above. The argument goes as follows: If $u$ is radial and decreasing and $p>2$, then for any $y\in \rn$ we have
 \[
\|u\|_p^p= \int_{\rn}|u|^p\, dx \geq u(y)^p|y|^n\omega_{n}, 
 \]
 where $\omega_{n}$ is the volume of the unit ball in $\rn$. 
Now take this to the power $1-\frac{2}{p}$, multiply by  $|u(y)|^2|y|^{\frac{n(2-p)}{p}}$ and integrate over $y$ to obtain 
\[
\int_{\rn}\frac{|u(y)|^2}{|y|^{\frac{n(p-2)}{p}}}\, dy \geq \omega_{n}^{1-\frac{2}{p}}\|u\|_p^2.
\]
It now suffices to take $p:=\frac{2n}{n-2}$ and use Hardy's inequality to conclude.

A H\"older-type interpolation between the Hardy and Sobolev  inequalities yields the Hardy-Sobolev inequality, which states that for any $s\in [0, 2]$, there exists $C(s,n)>0$ such that
\begin{equation}\label{ineq:hs}
\hbox{$\left(\int_{\rn}\frac{|u|^{\crits}}{|x|^s}\, dx\right)^{\frac{2}{2^\star}}\leq C(s,n)\int_{\rn}|\nabla u|^2\, dx$ \quad for all $u\in C^\infty_c(\rn)$, }
\end{equation}
where
$\crits:=\frac{2(n-s)}{n-2}.$ In other words, $\mu_{0, s}(\Omega)>0$ for every $s\in [0, 2]$. 

Indeed, by applying H\"older's inequality, then Hardy's and Sobolev's, we get
\begin{eqnarray*}
\int_{\rn} \frac{|u|^{ 2^{*}(s)}}{|x|^{s}} \, dx&=&
\int_{\rn} \frac{|u|^{s}}{|x|^{s}}\cdot |u|^{2^{*}(s)-s}\, dx\\
&\leq& (\int_{\rn}\frac{|u|^{2}}{|x|^{2}}\, dx)^{\frac{s}{2}}
(\int_{\rn} |u|^{(2^{*}(s)-s)\frac{2}{2-s}}dx)^{\frac{2-s}{2}}\\
&=&  (\int_{\rn}\frac{|u|^{2}}{|x|^{2}}\, dx)^{\frac{s}{2}}
(\int_{\rn} |u|^{2^{*}}dx)^{\frac{2-s}{2}}\\
&\leq& (C_1\int_{\rn} |\nabla u|^{2})^{\frac{s}{2}}\, dx)
(C_2\int_{\rn} |\nabla u|^{2}dx)^{\frac{2^{*}}{2}\cdot \frac{2-s}{2}}\\
&=& C (\int_{\rn} |\nabla u|^{2})^{\frac{n-s}{n-2}}\, dx.
\end{eqnarray*}
It is remarkable that when $s\in (0,2)$, the Hardy-Sobolev inequality inherits the singularity at $0$ from the Hardy inequality and the superquadratic exponent from the Sobolev inequality. 

Now what about the dependence on $\gamma$.  
Combining the above three inequalities, one obtain that for each $\gamma < \frac{(n-2)^2}{4} \leq \gamma_H(\Omega)=\mu_{0, 2}(\Omega)$, the latter being the best constant   in the  Hardy inequality on $\Omega$,  we have that inequality (\ref{general}) holds with $C>0$, in other words,
\begin{equation}\label{general.bis}
\hbox{$\mu_{\gamma, s}(\Omega) >0$ for all $s\in [0, 2]$ and $\gamma < \frac{(n-2)^2}{4}$.}
\end{equation}
We shall see later that this may hold true for values of $\gamma$ beyond $\frac{(n-2)^2}{4}$. \\

\noindent{\bf The Caffarelli-Kohn-Nirenberg inequalities:} We now show that (\ref{general.bis}) also contains the celebrated Caffarelli-Kohn-Nirenberg inequalities \cite{ckn}, which state that there is a constant $C:=C(a,b,n)>0$ such that the following inequality holds:
\begin{equation} \label{CKN:1}
\hbox{$\left(\int_{\rn}|x|^{-bq}|u|^q \right)^{\frac{2}{q}}\leq C\int_{\rn}|x|^{-2a}|\nabla u|^2 dx $ \quad for all $u\in C^\infty_0(\rn)$,}
\end{equation}
where
\begin{equation}\label{cond1:bis}
-\infty<a<\frac{n-2}{2}, \ \ 0 \leq b-a\leq 1, \ \ {\rm and}\ \ q=\frac{2n}{n-2+2(b-a)}.
\end{equation}
Indeed, by setting $w(x)=|x|^{-a}u(x)$, we see that for 
any $u \in C_0^{\infty}(\Omega)$,
\begin{eqnarray*}
\int_{\Omega}|x|^{-2a}|\nabla u|^2 dx&=&\int_{\Omega}|x|^{-2a}(a^2|x|^{2a-2} w^2+2a|x|^{2a-2}w x.\nabla w+|x|^{2a}|\nabla w|^2)dx\\
&=&\int_{\Omega}|\nabla w|^2dx +a^2\int_{\Omega}\frac{w^2}{|x|^2}dx+\int_{\Omega} 2a|x|^{-2}wx.\nabla wdx\\
&=&\int_{\Omega}|\nabla w|^2dx +a^2\int_{\Omega}\frac{w^2}{|x|^2}dx+a\int_{\Omega} |x|^{-2}x.\nabla (w^2)dx\\
&=&\int_{\Omega}|\nabla w|^2dx -\gamma \int_{\Omega}\frac{w^2}{|x|^2}dx,
\end{eqnarray*}
with 
$\gamma= a(n-2-a)$,
and where  the last equality is obtained by integration by parts.  Now note that if $a<\frac{n-2}{2}$, then by Hardy's inequality, $\int_{\Omega}|x|^{-2a}|\nabla u|^2 dx <+\infty$ if and only if 
both $\int_{\Omega}|\nabla w|^2 dx <+\infty$ and $\int_{\Omega}\frac{|w|^2}{|x|^2} dx <+\infty$.
Furthermore, 
\begin{equation}
\frac{\int_{\Omega}|x|^{-2a}|\nabla u|^2dx}{\left(\int_{\Omega}|x|^{-bq}|u|^q \right)^{\frac{2}{q}}}=\frac{\int_{\Omega} |\nabla w|^2-\gamma \int_{\Omega}\frac{w^2}{|x|^2}dx}{(\int_{\Omega}\frac{w^{2^*(s)}}{|x|^s}dx)^{\frac{2}{2^*(s)}}}, 
\end{equation}
where $s=(b-a)q$. This readily implies that (\ref{general}) and (\ref{CKN:1}) are equivalent under the above conditions on $a, b, q, s,$ and $\gamma$.  \hfill $\Box$ \\

\section{Caffarelli-Kohn-Nirenberg type inequalities on $\rnp$}

\noindent{\bf A general form for the Hardy-Sobolev inequality:}  The following has been noted by many authors. See for example \cites{gm, craig}.  

\begin{proposition} \label{cowan} Let $\Omega$ be an open subset of $\rn$ and consider $\rho\in C^\infty(\Omega)$ such that $\rho>0$ and $-\Delta\rho>0$. Then for any $u\in \dundeux$ we have that  $\sqrt{\rho^{-1}(-\Delta)\rho}u\in L^2(\Omega)$ and 
\begin{equation}
\int_\Omega\frac{-\Delta \rho}{\rho} u^2\, dx\leq \int_\Omega |\nabla u|^2\, dx.\label{ineq:hardy}
\end{equation}
Moreover, the case of equality is achieved exactly on $\rr\rho\cap \dundeuxr$. In particular, if $\rho\not\in \dundeux$, there are no nontrival extremals for \eqref{ineq:hardy}.
\end{proposition}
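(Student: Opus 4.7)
The guiding idea is the classical \emph{ground state substitution}: since $\rho>0$ throughout $\Omega$, any $u\in C^\infty_c(\Omega)$ can be written as $u=\rho v$ with $v=u/\rho\in C^\infty_c(\Omega)$. The plan is to derive, for such $u$, the \emph{pointwise integral identity}
\[
\int_\Omega |\nabla u|^2\, dx \;=\; \int_\Omega \rho^2 |\nabla v|^2\, dx \;+\; \int_\Omega \frac{-\Delta \rho}{\rho}\, u^2\, dx,
\]
from which the inequality is immediate because both terms on the right are nonnegative under the assumption $-\Delta\rho>0$.

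To obtain this identity, I would expand $|\nabla u|^2=|\nabla(\rho v)|^2=\rho^2|\nabla v|^2+2\rho v\,\nabla v\cdot\nabla\rho+v^2|\nabla \rho|^2$ and treat the cross term by rewriting $2\rho v\,\nabla v\cdot \nabla\rho=\rho\,\nabla(v^2)\cdot\nabla\rho$ and integrating by parts. Since $v^2$ is compactly supported in $\Omega$, there are no boundary contributions, and one gets
\[
\int_\Omega \rho\,\nabla(v^2)\cdot\nabla\rho\, dx \;=\; -\int_\Omega v^2\,\mathrm{div}(\rho\nabla\rho)\, dx\;=\;-\int_\Omega v^2|\nabla\rho|^2\, dx-\int_\Omega v^2\rho\,\Delta\rho\, dx.
\]
Combining with the expansion, the $v^2|\nabla\rho|^2$ terms cancel and one is left precisely with the claimed identity, since $v^2\rho\,\Delta\rho=(u^2/\rho)\,\Delta\rho$.

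To pass from $C^\infty_c(\Omega)$ to $u\in\dundeux$, I would approximate $u$ by $u_n\in C^\infty_c(\Omega)$ in the Dirichlet norm. Then $\int_\Omega|\nabla u_n|^2\to \int_\Omega|\nabla u|^2$, and upon extracting a subsequence converging a.e., Fatou's lemma applied to the nonnegative integrand $\frac{-\Delta\rho}{\rho}u_n^2$ gives
\[
\int_\Omega\frac{-\Delta\rho}{\rho}u^2\, dx\;\leq\;\liminf_{n\to\infty}\int_\Omega\frac{-\Delta\rho}{\rho}u_n^2\, dx\;\leq\;\int_\Omega |\nabla u|^2\, dx,
\]
establishing both the integrability of $\sqrt{\rho^{-1}(-\Delta)\rho}\,u$ in $L^2(\Omega)$ and the inequality \eqref{ineq:hardy}.

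For the equality case, I would upgrade Fatou to an actual limit: the identity above applied to the differences $u_n-u_m$ (valid since $u_n-u_m\in C^\infty_c(\Omega)$) shows that $\int_\Omega\rho^2|\nabla v_n|^2\, dx$ is a Cauchy quantity, where $v_n=u_n/\rho$. A localization argument on compact subsets $K\Subset\Omega$ (where $\rho$ is bounded away from $0$ and $\infty$, so the weighted and unweighted norms are comparable) identifies the limit with $\nabla(u/\rho)$, and yields the identity for every $u\in\dundeux$ for which the right-hand side makes sense. Equality in \eqref{ineq:hardy} then forces $\int_\Omega\rho^2|\nabla v|^2\, dx=0$, hence $v$ is (locally) constant on $\Omega$, so $u\in\rr\rho$; such $u$ lies in $\dundeux$ precisely when $\rho$ does, which proves the last assertion. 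The main technical obstacle is exactly this last step: the function $\rho$ need not be in $\dundeux$, so the weighted identity must be justified carefully via the local comparison argument rather than through a global density statement.
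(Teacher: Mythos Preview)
Your proof is correct and follows essentially the same approach as the paper: both rely on the ground state substitution identity
\[
\int_\Omega |\nabla(\rho v)|^2\,dx - \int_\Omega \frac{-\Delta\rho}{\rho}(\rho v)^2\,dx = \int_\Omega \rho^2|\nabla v|^2\,dx,
\]
obtained by integration by parts for $v\in C^\infty_c(\Omega)$, followed by a density argument. You have simply filled in more of the details that the paper leaves implicit, in particular the explicit expansion of $|\nabla(\rho v)|^2$, the use of Fatou's lemma to pass to general $u\in D^{1,2}(\Omega)$, and a more careful treatment of the equality case (where the paper says nothing beyond stating the identity).
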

\noindent The proof relies on the following integral identity:
\begin{equation*}\label{id:ipp}
\int_\Omega |\nabla (\rho v)|^2\, dx-\int_\Omega\frac{-\Delta\rho}{\rho}(\rho v)^2\, dx=\int_\Omega\rho^2|\nabla v|^2\, dx\geq 0
\end{equation*}
for all $v\in C^\infty_c(\Omega)$. This identity is a straightforward integration by parts. Since $\rho,-\Delta\rho>0$ in $\Omega$, it follows from density arguments that for any $u\in \dundeux$, then $\sqrt{\rho^{-1}(-\Delta)\rho}u\in L^2(\Omega)$ and \eqref{ineq:hardy} holds. 

There are many interesting examples of  weights of the form $\frac{-\Delta \rho}{\rho}$ besides $\frac{(n-2)^2}{4|x|^2}$, which could reflect  the nature of the domain. Here is one that will concern us throughout this paper. 

Fix $1\leq k\leq n$, and take $\rho(x):=x_1...x_k|x|^{-\alpha}$ for all $x\in\Omega:=\rr_+^k\times\rr^{n-k}\setminus\{0\}$. Then $\frac{-\Delta \rho}{\rho}=\frac{\alpha(n+2k-2-\alpha)}{|x|^2}$. Maximize the constant by taking $\alpha:=(n+2k-2)/2$. Since $\rho\not\in D^{1,2}(\rr_+^k\times\rr^{n-k})$, the above proposition applies and we obtain that for all $u\in D^{1,2}(\rr_+^k\times\rr^{n-k})$, 
\begin{equation}\label{ineq:hardy:rk}
\left(\frac{n+2k-2}{2}\right)^2\int_{\rr_+^k\times\rr^{n-k}}\frac{u^2}{|x|^2} \, dx\leq \int_{\rr_+^k\times\rr^{n-k}} |\nabla u|^2\, dx.
\end{equation}

Actually, we have that 

\begin{equation}\label{rnpk}
\left(\frac{n+2k-2}{2}\right)^2=\inf_u \frac{\int_{\rr_+^k\times\rr^{n-k}} |\nabla u|^2\, dx}{\int_{\rr_+^k\times\rr^{n-k}}\frac{u^2}{|x|^2} \, dx},
\end{equation}
where the infimum, taken over all $u\in D^{1,2}(\rr_+^k\times\rr^{n-k})\setminus\{0\}$, is never achieved. Note that, in particular,
\begin{equation}
\gamma_H(\rnp):=\mu_{0, 2}(\rnp)=\frac{n^2}{4}.
\end{equation}

By H\"older-interpolating between the above general Hardy inequality and the Sobolev inequality, one gets the following  
generalized Caffarelli-Kohn-Nirenberg inequality.

\begin{proposition}\label{prop:ckn:gen} Let $\Omega$ be an open subset of $\rn$. Let $\rho,\rho'\in C^\infty(\Omega)$ be such that $\rho,\rho'>0$ and $-\Delta\rho,-\Delta\rho'>0$. Fix $s\in [0,2]$ and assume that there exists $\eps\in (0,1)$ and $\rho_\eps\in C^\infty(\Omega)$ with $\rho_\eps,-\Delta\rho_\eps>0$ such that
$$\frac{-\Delta \rho}{\rho}\leq (1-\eps)\frac{-\Delta\rho_\eps}{\rho_\eps}\quad \hbox{\rm on }\Omega.
$$
Then, for all $u\in C^\infty_c(\Omega)$, 
\begin{equation}\label{ineq:ckn:36}
\left(\int_\Omega \left(\frac{-\Delta \rho'}{\rho'}\right)^{s/2}\rho^{\crits}|u|^{\crits}\, dx\right)^{\frac{2}{\crits}}\leq C \int_{\Omega}\rho^{2}|\nabla u|^2\, dx.
\end{equation}
\end{proposition}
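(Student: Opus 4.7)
The plan is to reduce the inequality to a Hardy-Sobolev-type inequality for the new unknown $w:=\rho u$, then apply classical Sobolev, apply Proposition~\ref{cowan} to $\rho'$, and interpolate. Since $\rho>0$ and $\rho\in C^\infty(\Omega)$, we have $w\in C^\infty_c(\Omega)$ for every $u\in C_c^\infty(\Omega)$, and the integration-by-parts identity that underlies Proposition~\ref{cowan} gives
\begin{equation*}
\int_\Omega \rho^2|\nabla u|^2\,dx=\int_\Omega|\nabla w|^2\,dx-\int_\Omega\frac{-\Delta\rho}{\rho}w^2\,dx =: Q(w),
\end{equation*}
while $\rho^{\crits}|u|^{\crits}=|w|^{\crits}$. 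So \eqref{ineq:ckn:36} is equivalent to the single inequality
\begin{equation*}
\Bigl(\int_\Omega \bigl(\tfrac{-\Delta\rho'}{\rho'}\bigr)^{s/2}|w|^{\crits}\,dx\Bigr)^{2/\crits}\leq C\,Q(w).
\end{equation*}

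Next I would extract coercivity of $Q$ from the hypothesis on $\rho_\eps$. Applying Proposition~\ref{cowan} to $\rho_\eps$ gives $\int_\Omega\tfrac{-\Delta\rho_\eps}{\rho_\eps}w^2\leq\int_\Omega|\nabla w|^2$, hence
\begin{equation*}
Q(w)\geq \int_\Omega|\nabla w|^2\,dx-(1-\eps)\int_\Omega\tfrac{-\Delta\rho_\eps}{\rho_\eps}w^2\,dx\geq \eps\int_\Omega|\nabla w|^2\,dx.
\end{equation*}
From this I obtain the two endpoints $s=0$ and $s=2$. The classical Sobolev inequality yields
\begin{equation*}
\Bigl(\int_\Omega|w|^{2^*}\,dx\Bigr)^{2/2^*}\leq C_1\|\nabla w\|_2^{2}\leq \tfrac{C_1}{\eps}Q(w),
\end{equation*}
and Proposition~\ref{cowan} applied to $\rho'$ combined with the same coercivity gives
\begin{equation*}
\int_\Omega\tfrac{-\Delta\rho'}{\rho'}w^2\,dx\leq \int_\Omega|\nabla w|^2\,dx\leq \tfrac{1}{\eps}Q(w).
\end{equation*}

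For general $s\in(0,2)$ I would interpolate between the two endpoints by H\"older, mimicking the derivation of the Hardy-Sobolev inequality given earlier in Section~4. Splitting
\begin{equation*}
\bigl(\tfrac{-\Delta\rho'}{\rho'}\bigr)^{s/2}|w|^{\crits}=\Bigl[\bigl(\tfrac{-\Delta\rho'}{\rho'}\bigr)^{s/2}|w|^{s}\Bigr]\cdot|w|^{\crits-s},
\end{equation*}
and applying H\"older with conjugate exponents $(2/s,2/(2-s))$, the algebraic identity $(\crits-s)\cdot\tfrac{2}{2-s}=\tfrac{2n}{n-2}=2^*$ gives
\begin{equation*}
\int_\Omega\bigl(\tfrac{-\Delta\rho'}{\rho'}\bigr)^{s/2}|w|^{\crits}\,dx\leq \Bigl(\int_\Omega\tfrac{-\Delta\rho'}{\rho'}|w|^2\,dx\Bigr)^{s/2}\Bigl(\int_\Omega|w|^{2^*}\,dx\Bigr)^{(2-s)/2}.
\end{equation*}
Substituting the two endpoint bounds and raising to the $2/\crits$ power leaves $Q(w)^{\alpha+\beta}$ with $\alpha=s/\crits$ and $\beta=n(2-s)/(2(n-s))$; a direct computation using $\crits=2(n-s)/(n-2)$ shows $\alpha+\beta=1$, which delivers the claimed bound.

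I do not anticipate any serious obstacle: the structure is dictated by the dual roles of the two weights ($\rho$ provides coercivity, $\rho'$ provides the singular boundary weight), and the particular exponent $\crits=2(n-s)/(n-2)$ is precisely what makes the H\"older interpolation close. The only point worth verifying carefully is that $w=\rho u\in D^{1,2}(\Omega)$ so that the Sobolev and Hardy inequalities are applicable, which is immediate from the compact support of $u$ and the smoothness of $\rho$ on that support.
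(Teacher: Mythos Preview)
Your proof is correct and follows exactly the approach indicated in the paper, which merely says ``By H\"older-interpolating between the above general Hardy inequality and the Sobolev inequality'' without writing out the details. Your substitution $w=\rho u$ via the integration-by-parts identity, the coercivity step using $\rho_\eps$, and the H\"older interpolation between the Sobolev endpoint and the generalized Hardy endpoint (Proposition~\ref{cowan} applied to $\rho'$) are precisely what the paper has in mind.
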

\noindent By applying the above to $\rho(x)=\rho'(x)=\left(\Pi_{i=1}^kx_i \right)|x|^{-a}$ and $\rho_\eps(x)=\left(\Pi_{i=1}^kx_i \right)|x|^{-\frac{n-2+2k}{2}}$ for $x\in\rr_+^k\times\rr^{n-k}$, by noting  that
$$\frac{\Delta\rho'}{\rho'}=\frac{a(n-2+2k-a)}{|x|^2}\hbox{ and }\frac{-\Delta\rho_\eps}{\rho_\eps}=\frac{(n-2+2k)^2}{4|x|^2},$$
and by applying Proposition \ref{prop:ckn:gen} with suitably chosen $a,b,q$, we get the following  inequalities isolated by Ghoussoub-Robert \cite{gr4}, which reduce to the Caffarelli-Kohn-Nirenberg inequalities when $k=0$. 
\begin{equation} \label{CKN:2}
\left(\int_{\rr_+^k\times\rr^{n-k}}|x|^{-bq}\left(\Pi_{i=1}^kx_i\right)^{q}|u|^q \right)^{\frac{2}{q}}\leq C\int_{\rr_+^k\times\rr^{n-k}}\left(\Pi_{i=1}^kx_i\right)^{2}|x|^{-2a}|\nabla u|^2 dx,
\end{equation}
where
\begin{equation}\label{cond2}
-\infty<a<\frac{n-2+2k}{2}, \ \ 0 \leq b-a\leq 1\,\,  {\rm and}\,\, q=\frac{2n}{n-2+2(b-a)}.
\end{equation}

\section{Attainability of the extremals on $\rn$ and $\rnp$}

Let $\cone$ be an open connected cone of $\rn$, 
$n\geq 3$, centered at $0$, that is
\begin{equation}\label{def:cone}
\left\{\begin{array}{c}
\cone \hbox{ is a domain (that is open and connected)}\\
\forall x\in\cone,\, \forall r>0, \; rx\in\cone.
\end{array}\right.
\end{equation}
Fix $\gamma<\gamma_H(\cone)$, and consider the question of whether there is an extremal $u_0\in\dundeuxc\setminus\{0\}$, where $\mu_{\gamma, s}(\cone)$ is attained. The question of the extremals on general cones has been tackled by Egnell \cite{egnell} in the case $\{\gamma=0\hbox{ and }s>0\}$. Theorem \ref{th:ext:cone} below has been noted in several contexts by Bartsche-Peng-Zhang \cite{BPZ} and Lin-Wang \cite{CL5}. 
We shall sketch below an independent proof. 

\begin{theorem}\label{th:ext:cone} Let $\cone$ be a cone of $\rn$, $n\geq 3$, as in \eqref{def:cone}, $s\in [0,2)$ and  $\gamma<\gamma_H(\cone$). 
\begin{enumerate}
\item If either $\{s>0\}$ or $\{s=0$, $\gamma>0$, $n\geq 4\}$, then extremals for $\mu_{\gamma, s}(\cone)$ exist.
\item If $\{s=0$ and $\gamma< 0\}$, there are no extremals for $\mu_{\gamma,0}(\cone)$.
\item If $\{s=0$ and $\gamma= 0\}$, there are extremals for $\mu_{0,0}(\cone)$ if and only if there exists $z\in \rn$ such that $(1+|x-z|^2)^{1-n/2}\in \dundeuxc$ (in particular $\overline{\cone}=\rn$).
\end{enumerate}
Moreover, if there are no extremals for $\mu_{\gamma, 0}(\cone)$, then $\mu_{\gamma, 0}(\cone)=\mu_{0, 0}(\cone)$, and 
\begin{equation}
\mu_{\gamma, 0}(\cone)=\frac{1}{K(n,2)^2}:=\mu_{0, 0}(\rn)=\inf_{u\in D^{1,2}(\rn)\setminus\{0\}}\frac{\int_{\rn}|\nabla u|^2\, dx}{\left(\int_{\rn}|u|^{\crit}\, dx\right)^{\frac{2}{\crit}}}.
\end{equation}
\end{theorem}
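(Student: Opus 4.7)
\medskip\noindent\textbf{Proof proposal.} The overall strategy is to exploit the scale invariance of the cone $\cone$ combined with a concentration-compactness analysis of minimizing sequences. For any $(u_k)\subset\dundeuxc$ with $J^\cone_{\gamma,s}(u_k)\to \mu_{\gamma,s}(\cone)$ and $\int_\cone |u_k|^{\crits}|x|^{-s}\,dx=1$, the rescaling $u_k(x)\mapsto t_k^{(n-2)/2}u_k(t_kx)$ preserves $\dundeuxc$ and $J^\cone_{\gamma,s}$, so one may normalize the distribution of mass, e.g.\ by fixing $\int_{\cone\cap B_1(0)}|u_k|^{\crits}|x|^{-s}\,dx=\tfrac12$. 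The sequence is then bounded in $\dundeuxc$, producing a weak limit $u_0$; the issue is to rule out $u_0\equiv 0$.

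For (1) in the case $s>0$, the weight $|x|^{-s}$ penalizes mass at infinity and the scaling normalization pins down the scale, so concentration can occur only at $0$, at $\infty$, or at a point $x_0\in\overline\cone\setminus\{0\}$. Concentration at $0$ or $\infty$ is eliminated by the normalization. For a concentration profile $U_\lambda(x)=\lambda^{(n-2)/2}U(\lambda(x-x_0))$ with $x_0\neq 0$ and $\lambda\to\infty$, a change of variables gives $\int|U_\lambda|^{\crits}|x|^{-s}\,dx\sim \lambda^{-s}|x_0|^{-s}\int|U|^{\crits}\,dy\to 0$, hence no mass of the denominator survives there. A Brezis--Lieb decomposition for the three integrals appearing in $J^\cone_{\gamma,s}$ then yields, using the subadditivity $b^{2/\crits}+b'^{2/\crits}\geq (b+b')^{2/\crits}$, that either $u_0\equiv 0$ (contradicting the normalization) or $u_k\to u_0$ strongly, with $u_0$ an extremal. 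For the case $s=0$, $\gamma>0$, $n\geq 4$, the weight is constant so a Sobolev bubble at $x_0\neq 0$ is an admissible concentration, governed by the limiting Sobolev constant $\mu_{0,0}(\rn)=1/K(n,2)^2$. One must therefore prove the strict inequality $\mu_{\gamma,0}(\cone)<\mu_{0,0}(\rn)$; this is done by testing $J^\cone_{\gamma,0}$ on truncated Hardy extremals $\eta U_\eps$ (with $U_\eps$ as in \eqref{J}, suitably placed inside the cone), and expanding the quotient as $\eps\to 0$. The negative contribution $-\gamma\int U_\eps^2|x|^{-2}$ is of the right order for $n\geq 4$, mirroring the classical Brezis--Nirenberg asymptotics, and forces the strict inequality.

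For (2), assume $s=0$, $\gamma<0$. Since $-\gamma>0$ one has $J^\cone_{\gamma,0}(u)\geq J^\cone_{0,0}(u)$, so $\mu_{\gamma,0}(\cone)\geq \mu_{0,0}(\cone)=\mu_{0,0}(\rn)$ (the latter equality is part of (3)). Conversely, picking a standard Talenti bubble translated and cut off around a point $x_k\in\cone$ with $|x_k|\to\infty$ gives a test family whose Hardy term $\int u_k^2|x|^{-2}\,dx$ tends to $0$, so $\mu_{\gamma,0}(\cone)\leq \mu_{0,0}(\rn)$; hence equality. But then any would-be extremal $u_0$ would satisfy $J^\cone_{0,0}(u_0)\leq \mu_{0,0}(\rn)+\gamma\int u_0^2|x|^{-2}<\mu_{0,0}(\rn)=\mu_{0,0}(\cone)$, a contradiction. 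For (3), $s=\gamma=0$: truncating and translating Talenti bubbles in $\cone$ shows $\mu_{0,0}(\cone)\leq\mu_{0,0}(\rn)$, while the reverse inequality is automatic, so equality holds. An extremal on $\cone$ is in particular an extremal on $\rn$, hence of the form \eqref{ext:00}; conversely, such a function belongs to $\dundeuxc$ precisely when it can be approximated by $C^\infty_c(\cone)$ functions in $D^{1,2}$, i.e., when the complement of $\cone$ has zero Sobolev capacity, which (since $\cone$ is a cone) is equivalent to $\overline\cone=\rn$. The trailing statement, that absence of extremals implies $\mu_{\gamma,0}(\cone)=\mu_{0,0}(\rn)$, then follows: if the Struwe-type decomposition forces bubbles, the only available limit of the Rayleigh quotient is $\mu_{0,0}(\rn)$.

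The main obstacle I expect is the strict inequality $\mu_{\gamma,0}(\cone)<\mu_{0,0}(\rn)$ in part~(1) for $s=0$, $\gamma>0$: unlike the case $s>0$, where concentration at non-zero points is ruled out for free by the vanishing of the singular weight, here one needs an explicit asymptotic expansion of the testing quotient. This requires a delicate analysis of the truncated Hardy-Schr\"odinger profile inside the cone (which is not smooth at $0$), with the sharp integrability $n\geq 4$ playing a pivotal role, exactly as in the Brezis--Nirenberg threshold.
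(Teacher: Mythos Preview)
Your overall concentration--compactness strategy (rescale to pin the mass, rule out dichotomy, and reduce to either a nontrivial weak limit or a single Dirac at some $x_0\neq 0$) matches the paper's proof, and your treatment of parts~(2), (3) and the trailing statement is essentially the same as theirs.

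There is, however, a genuine gap in your proof of the strict inequality $\mu_{\gamma,0}(\cone)<\mu_{0,0}(\rn)$ in part~(1) when $s=0$, $\gamma>0$, $n\geq 4$. You propose to test on ``truncated Hardy extremals $\eta U_\eps$ with $U_\eps$ as in \eqref{J}''. But the functions in \eqref{J} are radial and concentrate at the origin, which is the \emph{vertex} of the cone: $\cone$ contains no neighborhood of $0$, so there is no cutoff $\eta\in C^\infty_c(\cone)$ equal to $1$ near $0$, and ``suitably placing'' these profiles inside $\cone$ by translation destroys their relation to the Hardy weight $|x|^{-2}$ (which is anchored at $0$). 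In short, the test family you describe does not live in $\dundeuxc$, and the expansion you allude to cannot be carried out.

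The paper's fix is simple and you should use it: take instead the \emph{standard Aubin--Talenti bubbles} centered at an interior point $x_0\in\cone\setminus\{0\}$,
\[
u_\eps(x)=\eta(x)\left(\frac{\eps}{\eps^2+|x-x_0|^2}\right)^{\frac{n-2}{2}},\qquad \eta\in C^\infty_c(\cone),\ \eta\equiv 1\ \hbox{near }x_0.
\]
Since $x_0\neq 0$, the potential $|x|^{-2}$ is smooth near $x_0$ and acts like the constant $|x_0|^{-2}$ at leading order; the classical Aubin/Br\'ezis--Nirenberg expansion then gives
\[
J^{\cone}_{\gamma,0}(u_\eps)=K(n,2)^{-2}-\gamma|x_0|^{-2}c\,\theta_\eps+o(\theta_\eps),\qquad c>0,
\]
with $\theta_\eps=\eps^2$ for $n\geq 5$ and $\theta_\eps=\eps^2\ln\eps^{-1}$ for $n=4$. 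This is precisely where the restriction $n\geq 4$ enters (it guarantees that $\int u_\eps^2$ near $x_0$ is of the right order), and it yields $\mu_{\gamma,0}(\cone)<K(n,2)^{-2}$ for $\gamma>0$.
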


\begin{rem} Note that the case when $\{s=0$, $n=3$ and $\gamma>0\}$ remains unsettled. 
\end{rem} 

\medskip\noindent We isolate two corollaries. The first one is essentially what we need when  $\cone=\rnp$. The second deals with the case $\cone=\rn$. There is no issue for $n=3$ in the second corollary.
\begin{coro}\label{coro:cone:1} Let $\cone$ be a cone of $\rn$, $n\geq 3$, as in \eqref{def:cone} such that $\overline{\cone}\neq\rn$. We let $s\in [0,2)$ and  $\gamma<\gamma_H(\cone)$. Then, 
\begin{enumerate}
\item If $\{s>0\}$ or $\{s=0$, $\gamma>0$, $n\geq 4\}$, then there are extremals for $\mu_{\gamma, s}(\cone)$.
\item If $\{s=0$ and $\gamma\leq 0\}$, there are no extremals for $\mu_{\gamma,0}(\cone)$.
\end{enumerate}

\end{coro}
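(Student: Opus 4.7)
The plan is to extract the corollary as a direct consequence of Theorem \ref{th:ext:cone}, with the hypothesis $\overline{\cone}\neq\rn$ used only to dispose of the one case left open by part (3) of the theorem.

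First I would dispatch item (1) verbatim: the two clauses ``$s>0$'' and ``$s=0,\gamma>0,n\geq 4$'' appear word for word in Theorem \ref{th:ext:cone}(1), so extremals exist with no further argument (the extra assumption $\overline{\cone}\neq\rn$ is simply not needed here).

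For item (2) I would split on the sign of $\gamma$. If $\gamma<0$, Theorem \ref{th:ext:cone}(2) applies directly and gives nonexistence. If $\gamma=0$, I appeal to Theorem \ref{th:ext:cone}(3), which asserts that an extremal for $\mu_{0,0}(\cone)$ exists if and only if some Aubin--Talenti bubble $U_z(x):=(1+|x-z|^2)^{1-n/2}$ lies in $\dundeuxc$. Recall that $\dundeuxc$ is the completion of $C^\infty_c(\cone)$, so any element, extended by zero, defines a function in $\dundeuxr$. Since $U_z>0$ everywhere on $\rn$ while $\overline{\cone}\neq\rn$ provides a nonempty open set $V\subset\rn\setminus\overline{\cone}$ on which the extension by zero would have to vanish, $U_z$ cannot be approximated in $D^{1,2}$ by functions supported in $\cone$. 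Thus no such bubble belongs to $\dundeuxc$, and Theorem \ref{th:ext:cone}(3) yields the nonexistence statement. (Alternatively, one can simply quote the parenthetical ``in particular $\overline{\cone}=\rn$'' from the statement of (3) to conclude immediately.)

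There is no serious obstacle here: the corollary is a bookkeeping consequence of the theorem, and the only substantive point is the observation that the strict inclusion $\overline{\cone}\subsetneq \rn$ excludes the one scenario in which the degenerate case $s=0$, $\gamma=0$ could produce an extremal. All the real work---computing $\mu_{\gamma,s}(\cone)$, constructing minimizers via symmetrization or concentration-compactness on the cone, and ruling out extremals when $s=0$, $\gamma<0$---is already packaged inside Theorem \ref{th:ext:cone}.
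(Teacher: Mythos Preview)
Your proposal is correct and matches the paper's approach: the corollary is stated as an immediate consequence of Theorem~\ref{th:ext:cone}, with the only nontrivial observation being that $\overline{\cone}\neq\rn$ rules out the existence of a bubble $(1+|x-z|^2)^{1-n/2}$ in $\dundeuxc$, exactly as you argue via the parenthetical in part~(3).
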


\begin{coro}\label{coro:cone:2} Let $\cone$ be a cone of $\rn$, $n\geq 3$, as in \eqref{def:cone}. We assume that there exists $z\in \rn$ such that $(1+|x-z|^2)^{1-n/2}\in \dundeuxc$ (in particular, if $\overline{\cone}=\rn$). We fix $s\in [0,2)$ and $\gamma<\gamma_H(\cone$). Then, 
\begin{enumerate}
\item If $\{s>0\}$ or $\{s=0$ and $\gamma\geq 0\}$, then there are extremals for $\mu_{\gamma, s}(\cone)$.
\item If $\{s=0$ and $\gamma< 0\}$, there are no extremals for $\mu_{\gamma,0}(\cone)$.
\end{enumerate}
\end{coro}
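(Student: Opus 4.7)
The plan is to reduce the corollary to Theorem~\ref{th:ext:cone} by a case analysis in $(s,\gamma,n)$, with a single residual case requiring a capacity argument. For part~(1), four subcases arise: $s>0$ and $\{s=0,\gamma>0,n\geq 4\}$ both follow directly from Theorem~\ref{th:ext:cone}(1); the case $\{s=0,\gamma=0\}$ is precisely Theorem~\ref{th:ext:cone}(3), whose necessary and sufficient hypothesis matches our assumption verbatim; and $\{s=0,\gamma>0,n=3\}$ is the lone residual case, flagged as unsettled for general cones in the remark following the theorem.

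To dispatch that residual case, I would use a capacity argument to show that the standing hypothesis forces $\rn\setminus\cone$ to have zero $2$-capacity, from which $\dundeuxc=\dundeuxr$ and hence $\mu_{\gamma,0}(\cone)=\mu_{\gamma,0}(\rn)$. The capacity claim is the analytic core: a function in $\dundeuxc$ admits a quasicontinuous representative that vanishes quasi-everywhere on $\rn\setminus\cone$, and since $(1+|x-z|^2)^{1-n/2}$ is strictly positive on all of $\rn$, its membership in $\dundeuxc$ is incompatible with $\rn\setminus\cone$ carrying positive capacity. Once the identification of spaces is in hand, the explicit extremal $U$ given by \eqref{J} (at $s=0$) for $\mu_{\gamma,0}(\rn)$ serves as an extremal on $\cone$; the check that $U\in\dundeuxr$ reduces to the asymptotics $U(x)\sim|x|^{-\bm}$ as $x\to 0$ and $U(x)\sim|x|^{-\bp}$ as $|x|\to\infty$, combined with the bounds $0<\bm<(n-2)/2<\bp$, which hold whenever $0<\gamma<(n-2)^2/4$.

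Part~(2) is immediate from Theorem~\ref{th:ext:cone}(2), which carries no additional condition on the cone. The principal obstacle is therefore the capacity identification in the residual subcase: the quasicontinuous-representative machinery must be invoked carefully to extract $\dundeuxc=\dundeuxr$ from the sole hypothesis $(1+|x-z|^2)^{1-n/2}\in\dundeuxc$, after which the rest of the argument is bookkeeping.
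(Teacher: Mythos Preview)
Your case analysis for parts (1) and (2) is correct, and your treatment of the residual case $\{s=0,\ \gamma>0,\ n=3\}$ via capacity is valid: the Aubin--Talenti bubble is continuous and strictly positive on $\rn$, so its membership in $\dundeuxc$ forces $\rn\setminus\cone$ to be $2$-polar, whence $\dundeuxc=\dundeuxr$, $\gamma_H(\cone)=\frac{(n-2)^2}{4}$, and the explicit extremal \eqref{J} for $\mu_{\gamma,0}(\rn)$ transfers to $\cone$.

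The paper takes a more elementary route for this residual case that avoids capacity theory entirely. Within the proof of Theorem~\ref{th:ext:cone} it has already established the dichotomy: either a minimizing sequence converges to an extremal, or $\mu_{\gamma,0}(\cone)=K(n,2)^{-2}$. Under the hypothesis of the corollary, the paper simply plugs $U(x)=(1+|x-z|^2)^{1-n/2}$ itself into the functional and observes that $J_{\gamma,0}^{\cone}(U)=J_{\gamma,0}^{\rn}(U)<J_{0,0}^{\rn}(U)=K(n,2)^{-2}$ whenever $\gamma>0$, since the Hardy term $\gamma\int U^2/|x|^2\,dx$ is strictly positive. This strict inequality rules out the second branch of the dichotomy, so extremals exist. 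The paper's argument is shorter and stays inside the concentration-compactness framework already built; yours is more structural, yielding the identification $\dundeuxc=\dundeuxr$ and hence an \emph{explicit} extremal, at the cost of importing the quasicontinuous-representative machinery.
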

\begin{rem} \label{used} \rm We shall frequently use the following simple observations: 
If $s=0$, then for all $\gamma$, we always have $\mu_{\gamma, 0}(\Omega) \leq \frac{1}{K(n,2)^2}$. Indeed, fix $x_0\in\Omega \setminus\{0\}$ and let $\eta\in C^\infty_c(\Omega)$ be such that $\eta(x)=1$ around $x_0$. Set $\ue(x):=\eta(x)\left(\frac{\eps}{\eps^2+|x-x_0|^2}\right)^{\frac{n-2}{2}}$. 
Since $x_0\neq 0$, it is easy to check that $\lim_{\eps\to 0}\int_\Omega \frac{\ue^2}{|x|^2}\, dx=0$. It is also classical (see for example Aubin \cite{aubin}) that $$\lim_{\eps\to 0}\frac{\int_{\Omega}|\nabla \ue|^2\, dx}{\left(\int_\Omega |\ue|^{\crit}\, dx \right)^{\frac{2}{\crit}}}= \frac{1}{K(n,2)^2}.$$ 
It follows that $\mu_{\gamma,0}(\Omega)\leq \frac{1}{K(n,2)^2}$. 

\noindent As an easy consequence, we get that if  $s=0$ and $\gamma \leq 0$, then $\mu_{\gamma, 0}(\Omega) = \frac{1}{K(n,2)^2}$. 
\end{rem}

\smallskip\noindent{\it Proof of Theorem \ref{th:ext:cone}:} This goes as the classical proof of the existence of extremals for the Sobolev inequalities using Lions's concentration-compactness Lemmae (\cites{lions1,lions2}, see also Struwe \cite{st} for an exposition in book form).

\smallskip\noindent We let $(\tilde{u}_k)_k\in \dundeuxrnp$ be a minimizing sequence for $\mu_{\gamma, s}(\cone)$ such that
$$\int_{\cone}\frac{|\tilde{u}_k|^{\crits}}{|x|^s}\, dx=1\hbox{ and }\lim_{k\to +\infty}\int_{\cone}\left(|\nabla \tilde{u}_k|^2-\frac{\gamma}{|x|^2}\tilde{u}_k^2\right)\, dx=\mu_{\gamma, s}(\cone).$$
For any $k$, there exists $r_k>0$ such that $\int_{B_{r_k}(0)\cap\cone}\frac{|\tilde{u}_k|^{\crits}}{|x|^s}\, dx=1/2$. Define $u_k(x):=r_k^{\frac{n-2}{2}}u_k(r_k x)$ for all $x\in\cone$. Since $\cone$ is a cone, we have that $u_k\in \dundeuxc$. We then have that
\begin{equation}\label{mes:l}
\lim_{k\to +\infty}\int_{\cone}\left(|\nabla u_k|^2-\frac{\gamma}{|x|^2}u_k^2\right)\, dx=\mu_{\gamma, s}(\cone),
\end{equation}
and
\begin{equation}\label{mes:n}
\int_{\cone}\frac{|u_k|^{\crits}}{|x|^s}\, dx=1 \; , \; \int_{B_1(0)\cap\cone}\frac{|u_k|^{\crits}}{|x|^s}\, dx=\frac{1}{2}.
\end{equation}

\medskip\noindent
We first claim that, up to a subsequence,
\begin{equation}\label{cpct}
\lim_{R\to +\infty}\lim_{k\to +\infty}\int_{B_R(0)\cap\cone}\frac{|u_k|^{\crits}}{|x|^s}\, dx=1.
\end{equation}
Indeed, for $k\in\nn$ and $r\geq 0$, we define 
$$Q_k(r):=\int_{B_r(0)\cap\cone}\frac{|u_k|^{\crits}}{|x|^s}\, dx.$$
Since $0\leq Q_k\leq 1$ and $r\mapsto Q_k(r)$ is nondecreasing for all $k\in\nn$, then, up to a subsequence, there exists $Q: [0,+\infty)\to\rr$ nondecreasing such that $Q_k(r)\to Q(r)$ as $k\to +\infty$ for a.e. $r>0$. Set
$$\alpha:=\lim_{r\to +\infty}Q(r).$$
It follows from \eqref{mes:l} and \eqref{mes:n} that $\frac{1}{2}\leq \alpha\leq 1$. Up to taking another subsequence, there exists $(R_k)_k,(R'_k)_k\in (0,+\infty)$ such that
$$\left\{\begin{array}{l}
2 R_k\leq R'_k\leq 3R_k\hbox{ for all }k\in\nn,\\
\lim_{k\to +\infty}R_k=\lim_{k\to +\infty}R'_k=+\infty,\\
\lim_{k\to +\infty}Q_k(R_k)=\lim_{k\to +\infty}Q_k(R'_k)=\alpha.
\end{array}\right.$$
In particular,
\begin{equation}\label{dicho}
\lim_{k\to +\infty}\int_{B_{R_k}(0)\cap\cone}\frac{|u_k|^{\crits}}{|x|^s}\, dx=\alpha\hbox{\, and \, }\lim_{k\to +\infty}\int_{(\rn\setminus B_{R'_k}(0))\cap\cone}\frac{|u_k|^{\crits}}{|x|^s}\, dx=1-\alpha.
\end{equation}
We claim that
\begin{equation}\label{lim:u2}
\lim_{k\to +\infty}R_k^{-2}\int_{(B_{R'_k}(0)\setminus B_{R_k}(0))\cap\cone}u_k^2\, dx=0.
\end{equation}
Indeed, for all $x\in B_{R'_k}(0)\setminus B_{R_k}(0)$, we have that $R_k\leq |x|\leq 3R_k$. Therefore, H\"older's inequality yields
\begin{eqnarray*}
\int_{(B_{R'_k}(0)\setminus B_{R_k}(0))\cap\cone}u_k^2\, dx&\leq & \left(\int_{(B_{R'_k}(0)\setminus B_{R_k}(0))\cap\cone}\, dx\right)^{1-\frac{2}{\crits}}\left(\int_{(B_{R'_k}(0)\setminus B_{R_k}(0))\cap\cone}|u_k|^{\crits}\, dx\right)^\frac{2}{\crits}\\
&\leq & C R_k^{2}\left(\int_{(B_{R'_k}(0)\setminus B_{R_k}(0))\cap\cone}\frac{|u_k|^{\crits}}{|x|^s}\, dx\right)^\frac{2}{\crits}
\end{eqnarray*}
for all $k\in\nn$. Conclusion \eqref{lim:u2} then follows from \eqref{dicho}. 

\medskip\noindent We now let $\varphi\in C^\infty_c(\rn)$ be such that $\varphi(x)=1$ for $x\in B_1(0)$ and $\varphi(x)=0$ for $x\in \rn\setminus B_2(0)$. For $k\in\nn$, we define
$$\varphi_k(x):=\varphi\left(\frac{|x|}{R'_k-R_k}+\frac{R'_k-2R_k}{R'_k-R_k}\right)\hbox{ for all }x\in\rn.$$
One can easily check that $\varphi_ku_k,(1-\varphi_k)u_k\in \dundeuxc$ for all $k\in\nn$. Therefore, we have that
\begin{eqnarray*}
\int_{\cone}\frac{|\varphi_k u_k|^{\crits}}{|x|^s}\, dx&\geq& \int_{B_{R_k}(0)\cap\cone}\frac{|u_k|^{\crits}}{|x|^s}\, dx=\alpha+o(1),\\
\int_{\cone}\frac{|(1-\varphi_k) u_k|^{\crits}}{|x|^s}\, dx&\geq& \int_{(\rn\setminus B_{R'_k}(0))\cap\cone}\frac{|u_k|^{\crits}}{|x|^s}\, dx=1-\alpha+o(1)
\end{eqnarray*}
as $k\to +\infty$. The Hardy-Sobolev inequality 
and \eqref{lim:u2} yield
\begin{eqnarray*}
&&\mu_{\gamma,s}(\cone)\left(\int_{\cone}\frac{|\varphi_k u_k|^{\crits}}{|x|^s}\, dx\right)^{\frac{2}{\crits}} \leq \int_{\cone} \left(|\nabla (\varphi_k u_k)|^2-\frac{\gamma}{|x|^2}\varphi_k^2u_k^2\right)\, dx\\
&&\leq \int_{\cone} \varphi_k^2\left(|\nabla u_k|^2-\frac{\gamma}{|x|^2}u_k^2\right)\, dx+O\left(R_k^{-2}\int_{(B_{R'_k}(0)\setminus B_{R_k}(0))\cap\cone}u_k^2\, dx\right)\\
&&\leq \int_{\cone} \varphi_k^2\left(|\nabla u_k|^2-\frac{\gamma}{|x|^2}u_k^2\right)\, dx+o(1)
\end{eqnarray*}
as $k\to +\infty$. Similarly,
\begin{eqnarray*}
&&\mu_{\gamma,s}(\cone)\left(\int_{\cone}\frac{|(1-\varphi_k) u_k|^{\crits}}{|x|^s}\, dx\right)^{\frac{2}{\crits}} \leq \int_{\cone} (1-\varphi_k)^2\left(|\nabla u_k|^2-\frac{\gamma}{|x|^2}u_k^2\right)\, dx+o(1)
\end{eqnarray*}
as $k\to +\infty$. Therefore, we have that
\begin{eqnarray*}
&&\mu_{\gamma,s}(\cone)\left(\alpha^{\frac{2}{\crits}}+(1-\alpha)^{\frac{2}{\crits}}+o(1)\right)\\
&&\leq \mu_{\gamma,s}(\cone)\left(\left(\int_{\cone}\frac{|\varphi_k u_k|^{\crits}}{|x|^s}\, dx\right)^{\frac{2}{\crits}}+\left(\int_{\cone}\frac{|(1-\varphi_k) u_k|^{\crits}}{|x|^s}\, dx\right)^{\frac{2}{\crits}}\right)\\
&&\leq \int_{\cone} ( \varphi_k^2+(1-\varphi_k)^2)\left(|\nabla u_k|^2-\frac{\gamma}{|x|^2}u_k^2\right)\, dx+o(1)\\
&&\leq \int_{\cone} ( 1-2\varphi_k(1-\varphi_k))\left(|\nabla u_k|^2-\frac{\gamma}{|x|^2}u_k^2\right)\, dx+o(1)\\
&&\leq \mu_{\gamma,s}(\cone)+2\int_{\cone} \varphi_k(1-\varphi_k)\frac{\gamma}{|x|^2}u_k^2\, dx+o(1)\\
&&\leq \mu_{\gamma,s}(\cone)+O\left(R_k^{-2}\int_{(B_{R'_k}(0)\setminus B_{R_k}(0))\cap\cone}u_k^2\, dx\right)+o(1)\leq \mu_{\gamma,s}(\cone)+o(1)
\end{eqnarray*}
as $k\to +\infty$. Hence, $\alpha^{\frac{2}{\crits}}+(1-\alpha)^{\frac{2}{\crits}}\leq 1$, which implies that $\alpha=1$ since $0<\alpha\leq 1$. This proves the claim in (\ref{cpct}).

We now claim that there exists $u_\infty\in \dundeuxc$ such that $u_k\rightharpoonup u_\infty$ weakly in $\dundeuxc$ as $k\to +\infty$, $x_0\neq 0$ 
such that
\begin{eqnarray}
&\quad \hbox{either }\lim_{k\to +\infty}\frac{|u_k|^{\crits}}{|x|^s}{\bf 1}_{\cone}\, dx=\frac{|u_\infty|^{\crits}}{|x|^s}{\bf 1}_{\cone}\, dx&\hbox{ and }\int_{\cone}\frac{|u_\infty|^{\crits}}{|x|^s}\, dx=1\label{case:1}\\
&\hbox{ or }\lim_{k\to +\infty}\frac{|u_k|^{\crits}}{|x|^s}{\bf 1}_{\cone}\, dx=\delta_{x_0}&\hbox{ and }u_\infty\equiv 0.\label{case:2}
\end{eqnarray}
Arguing as above, we get that for all $x\in \rn$, we have that
\begin{equation*}
\lim_{r\to 0}\lim_{k\to +\infty}\int_{B_r(0)\cap\cone}\frac{|u_k|^{\crits}}{|x|^s}\, dx=\alpha_x\in \{0,1\}.
\end{equation*}
It then follows from the second identity of \eqref{mes:n} that $\alpha_0\leq 1/2$, and therefore $\alpha_0=0$. Moreover, it follows from the first identity of \eqref{mes:n} that there exist as most one point $x_0\in\rn$ such that $\alpha_{x_0}=1$. In particular $x_0\neq 0$ since $\alpha_0=0$. It then follows from Lions's second concentration compactness lemma \cites{lions1,lions2} (see also Struwe \cite{st} for an exposition in book form) that, up to a subsequence, there exists $u_\infty\in \dundeuxc$, $x_0\in \rn\setminus\{0\}$ and $C\in \{0,1\}$ such that $u_k\rightharpoonup u_\infty$ weakly in $\dundeuxc$ and 
\begin{equation*}
\lim_{k\to +\infty}\frac{|u_k|^{\crits}}{|x|^s}{\bf 1}_{\cone}\, dx=\frac{|u_\infty|^{\crits}}{|x|^s}{\bf 1}_{\cone}\, dx +C\delta_{x_0}\hbox{ in the sense of measures}
\end{equation*}
In particular, due to \eqref{mes:n} and  \eqref{cpct}, we have that
\begin{equation*}
1=\lim_{k\to +\infty}\int_{\cone}\frac{|u_k|^{\crits}}{|x|^s}\, dx=\int_{\cone}\frac{|u_\infty|^{\crits}}{|x|^s}\, dx +C.
\end{equation*}
Since $C\in \{0,1\}$, the claims in (\ref{case:1}) and (\ref{case:2}) follow.

We now assume that $u_\infty\not\equiv 0$, and we claim that $\lim_{k\to +\infty}u_k= u_\infty$ strongly in $\dundeuxc$ and that $u_\infty$ is an extremal for $\mu_{\gamma,s}(\cone)$. 

\smallskip\noindent 
Indeed, it  follows from \eqref{case:1} that $\int_{\cone}\frac{|u_\infty|^{\crits}}{|x|^s}\, dx=1$, hence 
\begin{equation*}
\mu_{\gamma,s}(\cone)\leq \int_{\cone}\left(|\nabla u_\infty|^2-\frac{\gamma}{|x|^2}u_\infty^2\right)\, dx. 
\end{equation*}
Moreover, since $u_k\rightharpoonup u_\infty$ weakly as $k\to +\infty$, we have that
$$\int_{\cone}\left(|\nabla u_\infty|^2-\frac{\gamma}{|x|^2}u_\infty^2\right)\, dx\leq \liminf_{k\to +\infty}\int_{\cone}\left(|\nabla u_k|^2-\frac{\gamma}{|x|^2}u_k^2\right)\, dx=\mu_{\gamma,s}(\cone).$$
Therefore, equality holds in this latest inequality, $u_\infty$ is an extremal for $\mu_{\gamma,s}(\cone)$ and boundedness yields the weak convergence of $(u_k)$ to $u_\infty$ in $\dundeuxc$. This proves the claim.

We now assume that $u_\infty\equiv 0$ and show that as $k\to +\infty$, 
\begin{equation}\label{step4}
s=0\;,\; \lim_{k\to +\infty}\int_{\cone}\frac{u_k^2}{|x|^2}\, dx=0\hbox{\,  and \, }|\nabla u_k|^2\, dx\rightharpoonup \mu_{\gamma,s}(\cone)\delta_{x_0} \, \hbox{in the sense of measures.}
\end{equation}
\smallskip\noindent
Indeed, since $u_k\rightharpoonup u_\infty\equiv 0$ weakly in $\dundeuxc$ as $k\to +\infty$, then for any $1\leq q<\crit:=\frac{2n}{n-2}$, $u_k\to 0$ strongly in $L^q_{loc}(\cone)$ when $k\to +\infty$. Assume by contradiction that $s>0$,  then $\crits<\crit$ and therefore, since $x_0\neq 0$, we have that 
$$\lim_{k\to +\infty}\int_{B_\delta(x_0)\cap\cone}\frac{|u_k|^{\crits}}{|x|^s}\, dx=0,$$
for $\delta>0$ small enough, contradicting \eqref{case:2}. Therefore $s=0$ and the first part of the claim is proved.

\medskip\noindent 
For the rest, we let $f\in C^\infty(\rn)$ be such that $f(x)=0$ for $x\in B_\delta(x_0)$, $f(x)=1$ for $x\in \rn\setminus B_{2\delta}(x_0)$ and $0\leq f\leq 1$. We define $\varphi:=1-f^2$ and $\psi:=f\sqrt{2-f^2}$. Clearly $\varphi,\psi\in C^\infty(\rn)$ and $\varphi^2+\psi^2=1$. Note that 
$$\mu_{\gamma,s}(\cone)\left(\int_{\cone}|\varphi u_k|^{\crit}\, dx\right)^{\frac{2}{\crit}}\leq \int_{\cone}\left(|\nabla (\varphi u_k)|^2-\frac{\gamma}{|x|^2}(\varphi u_k)^2\right)\, dx.$$
Integrating by parts, using \eqref{case:2}, the fact that $u_k\to 0$ strongly in $L^2_{loc}(\rn)$ as $k\to +\infty$, and that $\varphi^2=1-\psi^2$, we get that as $k\to +\infty$, 
\begin{eqnarray*}
\mu_{\gamma,s}(\cone)\left(|\varphi(x_0)|^{\crit}+o(1)\right)^{\frac{2}{\crit}}\leq \int_{\cone}\varphi^2\left(|\nabla  u_k|^2-\frac{\gamma}{|x|^2}u_k^2\right)\, dx+O\left(\int_{\hbox{Supp }\varphi\Delta\varphi}u_k^2\, dx\right)
\end{eqnarray*}
and 
\begin{eqnarray*}
 \mu_{\gamma,s}(\cone)+o(1)\leq\int_{\cone}\left(|\nabla  u_k|^2-\frac{\gamma}{|x|^2}u_k^2\right)\, dx -\int_{\cone}\psi^2\left(|\nabla  u_k|^2-\frac{\gamma}{|x|^2}u_k^2\right)\, dx+o(1).
\end{eqnarray*}
 Using again \eqref{mes:l}, we obtain
$$\int_{\cone}\psi^2\left(|\nabla  u_k|^2-\frac{\gamma}{|x|^2}u_k^2\right)\, dx\leq o(1)\qquad \hbox{as $k\to +\infty$.}$$
 Integrating again by parts and using the strong local convergence to $0$, we get that
$$\int_{\cone}\left(|\nabla  (\psi u_k)|^2-\frac{\gamma}{|x|^2}(\psi u_k)^2\right)\, dx\leq o(1) \qquad \hbox{as $k\to +\infty$.}$$
The coercivity 
then yields that $\lim_{k\to +\infty}\Vert \nabla(\psi u_k)\Vert_2=0$, and the Hardy inequality yields the  convergence of $|x|^{-1}(\psi u_k)_k$ to $0$ in $L^2(\cone)$. Therefore,
$$\lim_{k\to +\infty}\int_{(B_{2\delta}(x_0))^c\cap\cone}\frac{u_k^2}{|x|^2}\, dx=0.$$
Taking $\delta>0$ small enough and combining this result with the strong convergence of $(u_k)_k$ in $L^2_{loc}$ around $x_0\neq 0$ yields 
$$\lim_{k\to +\infty}\int_{\cone}\frac{u_k^2}{|x|^2}\, dx=0, $$
which once combined with the fact that  $\lim_{k\to +\infty}\Vert \nabla(\psi u_k)\Vert_2=0$ and \eqref{mes:l}, yields the third part of the claim. 

We now show that if $u_\infty\equiv 0$, then $s=0$ and
$$\mu_{\gamma,s}(\cone)=\mu_{0,0}(\rn)=\frac{1}{K(n,2)^2}.$$
\noindent
Indeed, since $u_k\in \dundeuxc\subset\dundeuxr$, we have that 
$$\mu_{0,0}(\rn)\left(\int_{\rn}|u_k|^{\crit}\, dx\right)^{\frac{2}{\crit}}\leq \int_{\rn}|\nabla u_k|^2\, dx.$$
It then follows from (\ref{step4}), \eqref{mes:l} and \eqref{mes:n} that $\mu_{0,0}(\rn)\leq \mu_{\gamma,s}(\cone)$. Conversely, Remark \ref{used}   
yields that  $\mu_{\gamma,s}(\cone)\leq \mu_{0,0}(\rn)=K(n,2)^{-1}$. These two inequalities prove the claim.

Note now that if $s=0$, $\gamma>0$ and $n\geq 4$, then necessarily
\begin{equation}
\mu_{\gamma,s}(\cone)<\mu_{0,0}(\rn)=\frac{1}{K(n,2)^2}.
\end{equation}
\noindent
Indeed, consider the family $u_\eps$ as in Remark \ref{used}.  
Well known computations by Aubin \cite{aubin} yield  
$$J_{\gamma,s}^{\cone}(u_\eps)=K(n,2)^{-2}-\gamma|x_0|^{-2}c\theta_\eps+o(\theta_\eps)\hbox{\, as $\eps\to 0$,}$$
where $c>0$, $\theta_\eps=\eps^2$ if $n\geq 5$ and $\theta_\eps=\eps^2\ln\eps^{-1}$ if $n=4$. It follows that if $\gamma>0$ and $n\geq 4$, then $\mu_{\gamma, s}(\cone)<K(n,2)^{-1}$. This proves the claim. 

\medskip\noindent
As noted in Remark \ref{used},   it is easy to see that 
if $s=0$ and $\gamma\leq 0$, then 
\begin{equation}
\mu_{\gamma,s}(\cone)=\mu_{0,0}(\rn)=\frac{1}{K(n,2)^2}.
\end{equation}
Moreover, if there are extremals then $\gamma=0$.

\medskip\noindent
We now show that in this case,  
there are extremals iff  there exists $z\in \rn$ such that $(1+|x-z|^2)^{1-n/2}\in \dundeuxc$ (in particular, if $\overline{\cone}=\rn$).\par

\smallskip\noindent Indeed, the potential extremals for $\mu_{0,0}(\cone)$ are extremals for $\mu_{0,0}(\rn)$, and therefore of the form $x\mapsto a(b+|x-z_0|^2)^{1-n/2}$ for some $a\neq 0$ and $b>0$ (see Aubin \cite{aubin} or Talenti \cite{Tal}). Using the homothetic invariance of the cone, we  get that there is an extremal of the form $x\mapsto (1+|x-z|^2)^{1-n/2}$ for some $z\in\rn$. Since an extremal has support in $\overline{\cone}$, we then get that $\overline{\cone}=\rn$. This proves the claim.

Finally, assume that $s=0$ and that there exists $z\in \rn$ such that $x\mapsto (1+|x-z|^2)^{1-n/2}\in \dundeuxc$. Then $\mu_{\gamma,0}(\cone)<\frac{1}{K(n,2)^2}$ for all $\gamma>0$. For that it 
 suffices to consider $U(x):=(1+|x-z|^2)^{1-n/2}$ for all $x\in\rn$, and to note that $J_{\gamma,0}^{\cone}(U)=J_{\gamma,0}^{\rn}(U)<J_{0,0}^{\rn}(U)=K(n,2)^{-1}$. \\
This ends the proof of Theorem \ref{th:ext:cone} and Corollaries  \ref{coro:cone:1},  \ref{coro:cone:2}.

\part{When $0$ is an interior singularity for the operator $L_\gamma$}

\section{Analytic conditions for the existence of extremals}

We  now consider the quantity
\begin{equation}\label{def:mu:gamma:s}
\mu_{\gamma, s, \lambda}(\Omega):=\inf\left\{\frac{\int_{\Omega} |\nabla u|^2dx-\gamma \int_{\Omega}\frac{u^2}{|x|^2}dx-\lambda \int_\Omega u^2dx}{(\int_{\Omega}\frac{u^{2^*}}{|x|^s}dx)^{\frac{2}{2^*}}};\,  u\in D^{1.2}(\Omega)\setminus\{0\}\right\},
\end{equation}
in such a way that $\mu_{\gamma, s, 0}(\Omega)=\mu_{\gamma, s}(\Omega)$.  The following proposition is straightforward. 

\begin{proposition}\label{prop:pptes:inf} Let $\Omega$ be a bounded smooth domain such that $0\in \Omega$ and assume $0\leq s\leq 2$.  
 If $\gamma <\frac{{(n-2)}^2}{4}$, then  
 \begin{equation}
 \sup_{\lambda\in\rr}\mu_{\gamma, s, \lambda}(\Omega)= \mu_{\gamma, s}(\rn).
 \end{equation}

\end{proposition}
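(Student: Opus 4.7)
The plan is to verify the two matching bounds $\sup_\lambda \mu_{\gamma,s,\lambda}(\Omega)\le \mu_{\gamma,s}(\rn)$ and $\sup_\lambda \mu_{\gamma,s,\lambda}(\Omega)\ge \mu_{\gamma,s}(\rn)$. Both will rest on the hypothesis that $0$ lies in the interior of $\Omega$ and on the identity $\mu_{\gamma,s}(\Omega)=\mu_{\gamma,s}(\rn)$ already recorded earlier in the excerpt.

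For the upper bound I will argue as follows. Fix $\delta>0$ and choose $v\in C^\infty_c(\rn)$ with $J^{\rn}_{\gamma,s}(v)\le \mu_{\gamma,s}(\rn)+\delta$, which is possible by density. Dilate to $v_\eps(x):=\eps^{-(n-2)/2}v(x/\eps)$; since $\mathrm{supp}(v_\eps)\subset \eps\cdot\mathrm{supp}(v)$ and $0$ is interior to $\Omega$, one has $v_\eps\in C^\infty_c(\Omega)\subset \dundeux$ for $\eps$ small enough. A direct change of variables shows that the three \emph{critical} quantities $\int|\nabla v_\eps|^2$, $\int v_\eps^2/|x|^2$ and $\int v_\eps^{\crits}/|x|^s$ are all invariant under this scaling, while the subcritical $L^2$ term satisfies $\int_\Omega v_\eps^2\,dx=\eps^2\int_{\rn} v^2\,dx\to 0$ as $\eps\to 0$. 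Hence
$$J^{\Omega}_{\gamma,s,\lambda}(v_\eps)\;\xrightarrow[\eps\to 0]{}\; J^{\rn}_{\gamma,s}(v)\le \mu_{\gamma,s}(\rn)+\delta,$$
so $\mu_{\gamma,s,\lambda}(\Omega)\le \mu_{\gamma,s}(\rn)+\delta$ for every $\lambda\in\rr$ and every $\delta>0$; this gives the upper bound uniformly in $\lambda$.

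For the lower bound I will restrict to $\lambda\le 0$; since $\lambda\mapsto \mu_{\gamma,s,\lambda}(\Omega)$ is nonincreasing, the supremum over $\rr$ is already attained as a supremum over this half-line. When $\lambda\le 0$, for every admissible $u$ one has $-\lambda\int_\Omega u^2\ge 0$, so the numerator of $J^\Omega_{\gamma,s,\lambda}(u)$ dominates that of $J^\Omega_{\gamma,s}(u)$. Dividing by the common positive denominator and taking the infimum over $u\in\dundeux\setminus\{0\}$ yields
$$\mu_{\gamma,s,\lambda}(\Omega)\ge \mu_{\gamma,s}(\Omega)=\mu_{\gamma,s}(\rn),$$
where the final equality is the already-recalled identity valid whenever $0\in\Omega$. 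Combining the two steps gives $\mu_{\gamma,s,\lambda}(\Omega)=\mu_{\gamma,s}(\rn)$ for all $\lambda\le 0$, which together with the uniform upper bound yields the claimed equality of suprema.

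I do not anticipate any serious obstacle here: the proof is a soft consequence of scaling combined with the domain-independence of $\mu_{\gamma,s}$ already established in the overview. The only piece of bookkeeping worth highlighting is the scaling balance — the three homogeneous integrals have scaling exponent zero while $\int v_\eps^2$ picks up a factor $\eps^2$ — which is precisely what allows the $\lambda$-perturbation to be washed out by concentration at the interior singularity.
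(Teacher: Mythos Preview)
Your proof is correct and is precisely the natural argument: the paper itself calls this proposition ``straightforward'' and offers no proof, and your combination of the scaling invariance (to kill the subcritical $L^2$ term and obtain the uniform upper bound) with the monotonicity in $\lambda$ together with the already-established identity $\mu_{\gamma,s}(\Omega)=\mu_{\gamma,s}(\rn)$ (to obtain the lower bound at $\lambda\le 0$) is exactly what is intended.
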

Note that if $0\in \Omega$, then $\mu_{\gamma, s, 0}(\Omega)=\mu_{\gamma, s}(\rn)$, which then imply in view of the above proposition that $\mu_{\gamma, s, \lambda}(\Omega)=\mu_{\gamma, s}(\rn)$ for all $\lambda \leq 0$. These are the cases, where there are no extremals for $\mu_{\gamma, s, \lambda}(\Omega)$. Now, we consider the case when $\mu_{\gamma, s, \lambda}(\Omega)<\mu_{\gamma, s}(\rn)$. The following proposition is standard but crucial to what follows. 

We shall denote by $\lambda_1(L_\gamma):=\lambda_1(L_\gamma, \Omega)$ the first eigenvalue of the operator $L_\gamma$, that is 
\[
\lambda_1(L_\gamma)=\inf\left\{\frac{\int_{\Omega} |\nabla u|^2dx-\gamma \int_{\Omega}\frac{u^2}{|x|^2}dx}{\int_\Omega u^2dx};\,  u\in D^{1.2}(\Omega)\setminus\{0\}\right\}.
\]

\begin{proposition} \label{tool.bis} Let $\Omega$ be a bounded domain in $\rn$ ($n\geq 3$)  such that $0\in \Omega$, and assume that $\gamma<\frac{(n-2)^2}{4}$ and $0\leq s\leq 2$. 
 If $\mu_{\gamma, s, \lambda}(\Omega)<\mu_{\gamma,s}(\rn)$ for some $\lambda \geq 0$, then there are extremals for  $\mu_{\gamma, s, \lambda}(\Omega)$ in $H^1_0(\Omega)$. 

If in addition $0<\lambda <\lambda_1(L_\gamma)$ and $s <2$,  
then $\mu_{\gamma, s, \lambda}(\Omega)>0$, 
 and there exists a positive solution to the equation 
\begin{eqnarray} \label{pos}
\left\{ \begin{array}{llll}
-\Delta u-\gamma \frac{u}{|x|^2}-\lambda u&=&\frac{u^{2^*(s)-1}}{|x|^s} \ \ &\text{on } \Omega\\
\hfill u&>&0 &\text{on }\partial \Omega\\
\hfill u&=&0 &\text{on }\partial \Omega.
\end{array} \right.
\end{eqnarray} 

\end{proposition}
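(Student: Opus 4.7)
My plan is to apply the standard Brezis--Nirenberg concentration-compactness strategy, adapted to the singular Hardy--Sobolev setting. First I would take a minimizing sequence $(u_k) \subset H^1_0(\Omega)$ normalized by $\int_\Omega |u_k|^{2^*(s)} |x|^{-s}\,dx = 1$, so that the numerator tends to $\mu := \mu_{\gamma,s,\lambda}(\Omega)$. Since $\gamma < \frac{(n-2)^2}{4}$, the classical Hardy inequality (cited earlier in the excerpt) gives a constant $c_\gamma > 0$ with $\int_\Omega(|\nabla v|^2 - \gamma v^2/|x|^2)\,dx \geq c_\gamma \int_\Omega |\nabla v|^2\,dx$, and combined with Poincar\'e to absorb the $\lambda \int u_k^2$ term, this shows $(u_k)$ is bounded in $H^1_0(\Omega)$. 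I would then extract a subsequence with $u_k \rightharpoonup u$ weakly in $H^1_0$, strongly in $L^2(\Omega)$ by Rellich--Kondrachov, and a.e.; set $v_k := u_k - u$.

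Next I would apply the Brezis--Lieb lemma termwise. For $\int |\nabla u_k|^2$ and $\int |u_k|^{2^*(s)}|x|^{-s}\,dx$ this is standard; for the singular term $\int u_k^2/|x|^2$ it follows from a.e.\ convergence together with the $L^2(|x|^{-2})$ bound granted by Hardy. Writing $A := \int_\Omega(|\nabla u|^2 - \gamma u^2/|x|^2 - \lambda u^2)\,dx$, $B_k := \int_\Omega(|\nabla v_k|^2 - \gamma v_k^2/|x|^2)\,dx$, $a := \int_\Omega |u|^{2^*(s)}|x|^{-s}\,dx$, and $b_k := \int_\Omega |v_k|^{2^*(s)}|x|^{-s}\,dx$, the expansion reads $\mu = A + B_k + o(1)$ and $1 = a + b_k + o(1)$. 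The definition of $\mu$ applied to $u$ gives $A \geq \mu\,a^{2/2^*(s)}$, while the Hardy--Sobolev inequality on $\rn$ applied to $v_k$ extended by zero gives $B_k \geq \mu_{\gamma,s}(\rn)\,b_k^{2/2^*(s)}$. Letting $b := \lim b_k$ along a subsequence, we obtain
\begin{equation*}
\mu \;\geq\; \mu\,a^{2/2^*(s)} + \mu_{\gamma,s}(\rn)\,b^{2/2^*(s)}, \qquad a+b = 1.
\end{equation*}
Since $2/2^*(s) \leq 1$, concavity gives $a^{2/2^*(s)} + b^{2/2^*(s)} \geq (a+b)^{2/2^*(s)} = 1$, hence $1 - a^{2/2^*(s)} \leq b^{2/2^*(s)}$. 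Rewriting the displayed inequality as $\mu(1 - a^{2/2^*(s)}) \geq \mu_{\gamma,s}(\rn)\,b^{2/2^*(s)}$, two cases arise: if $\mu > 0$ and $b > 0$, dividing by $b^{2/2^*(s)}$ forces $\mu \geq \mu_{\gamma,s}(\rn)$, contradicting the hypothesis; if $\mu \leq 0$, the left side is $\leq 0$ and the right side is $\geq 0$, again forcing $b = 0$. In either case $u \not\equiv 0$, and equality throughout gives $A = \mu$, so $u$ is an extremal.

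For the second statement, under $0 < \lambda < \lambda_1(L_\gamma)$ the variational characterization of $\lambda_1(L_\gamma)$ gives $A \geq \bigl(1 - \lambda/\lambda_1(L_\gamma)\bigr)\int_\Omega(|\nabla u|^2 - \gamma u^2/|x|^2)\,dx$, and the Hardy--Sobolev inequality then yields $\mu \geq \bigl(1 - \lambda/\lambda_1(L_\gamma)\bigr)\mu_{\gamma,s}(\rn) > 0$. Since the Rayleigh quotient only depends on $|u|$, I would replace the extremal by $|u|$ to assume $u \geq 0$, and a suitable rescaling makes $u$ a weak solution of $-\Delta u - \gamma u/|x|^2 - \lambda u = u^{2^*(s)-1}/|x|^s$. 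Using $s < 2$, a standard elliptic bootstrap gives $u \in C^0_{\mathrm{loc}}(\Omega\setminus\{0\})$, and the strong maximum principle applied to the coercive operator $L_\gamma - \lambda$ on compact subsets of $\Omega\setminus\{0\}$, together with the behavior near $0$ (where $u$ is bounded below by a positive multiple of the Hardy singular profile), yields $u > 0$ in $\Omega$.

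The main obstacle I expect is the dichotomy in the sign of $\mu$: the usual Brezis--Nirenberg presentation assumes $\mu > 0$, but here $\mu \leq 0$ is legitimate because of the linear perturbation, and the $a^p+b^p$ argument must be supplemented as above. The remaining ingredients --- the Brezis--Lieb decomposition for the singular Hardy weight, the coercivity granted by $\lambda < \lambda_1(L_\gamma)$, and the strong maximum principle up to the interior singularity --- are standard given the assumptions $\gamma < \frac{(n-2)^2}{4}$ and $s < 2$.
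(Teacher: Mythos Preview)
Your approach is essentially identical to the paper's: both normalize the minimizing sequence, apply a Brezis--Lieb type splitting to the numerator and denominator, use the definition of $\mu_{\gamma,s,\lambda}(\Omega)$ on the weak limit $u$ and the sharp Hardy--Sobolev constant $\mu_{\gamma,s}(\rn)$ on the remainder $v_k$, and conclude from the strict gap hypothesis that the remainder carries no mass. Your explicit case analysis on the sign of $\mu$ is a bit more careful than the paper's presentation, but amounts to the same inequality manipulation.

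One correction is needed in your boundedness step: you cannot invoke Poincar\'e to absorb the term $\lambda\int_\Omega u_k^2\,dx$, because the hypothesis allows any $\lambda\geq 0$, and for $\lambda$ large the form $\int_\Omega(|\nabla v|^2-\gamma v^2/|x|^2-\lambda v^2)\,dx$ is no longer coercive on $H^1_0(\Omega)$. The paper instead bounds $\int_\Omega u_k^2\,dx$ directly from the normalization $\int_\Omega |u_k|^{2^*(s)}|x|^{-s}\,dx=1$ via H\"older (using $2^*(s)\geq 2$ and $\Omega$ bounded), and only then applies Hardy to control $\int_\Omega|\nabla u_k|^2\,dx$. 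With this fix your argument goes through.
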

\begin{proof}  
Let $(u_i)\in H^1_0(\Omega)\setminus\{0\}$ be a minimizing sequence for $\mu_{\gamma,s}(\Omega)$, that is $J^\Omega_{\gamma,s}(u_i)=\mu_{\gamma,s}(\Omega)+o(1)$ as $i\to +\infty$. Up to multiplying by a constant, we assume that
\begin{eqnarray}
&&\int_\Omega \frac{|u_i|^{\crits}}{|x|^s}\, dx=1\label{eq:pf:min:1}\\
&&\int_\Omega\left(|\nabla u_i|^2-\gamma\frac{u_i^2}{|x|^2}- \lambda  u_i^2\right)\, dx=\mu_{\gamma,s}(\Omega)+o(1)\hbox{ as }i\to +\infty.\label{eq:pf:min:2}
\end{eqnarray}
\medskip\noindent We claim that $(u_i)_i$ is bounded in $H^1_0(\Omega)$. 
Indeed, \eqref{eq:pf:min:1} clearly yields that 
\begin{equation}\label{bnd:u:l2}
\hbox{$\int_\Omega u_i^2\, dx\leq C<+\infty$ for all $i$.}
\end{equation}
Since $\gamma<\frac{(n-2)^2}{4}$, the Hardy inequality
combined with \eqref{eq:pf:min:2} yield the boundedness of $(u_i)_i$ in $H^1_0(\Omega)$. 
It follows that there exists $u\in H^1_0(\Omega)$ such that, up to a subsequence, $(u_i)$ goes to $u$ weakly in $H^1_0(\Omega)$ and strongly in $L^2(\Omega)$ as $i\to +\infty$.

We now show that $\int_\Omega \frac{|u|^{\crits}}{|x|^s}\, dx=1$. For that,  define $\theta_i:=u_i-u\in H^1_0(\Omega)$. In particular, $\theta_i$ goes to $0$ weakly in $H^1_0(\Omega)$ and strongly in $L^2(\Omega)$ as $i\to +\infty$. Hence, 
\begin{equation}\label{eq:pf:min:3}
1=\int_\Omega \frac{|u_i|^{\crits}}{|x|^s}\, dx=\int_\Omega \frac{|u|^{\crits}}{|x|^s}\, dx+\int_\Omega \frac{|\theta_i|^{\crits}}{|x|^s}\, dx+o(1)
\end{equation}
and
\begin{equation}\label{eq:pf:min:4}
\mu_{\gamma,s, \lambda}(\Omega)= \int_\Omega\left(|\nabla u|^2-\gamma\frac{u^2}{|x|^2}-\lambda u^2\right)\, dx+\int_\Omega\left(|\nabla \theta_i|^2-\gamma\frac{\theta_i^2}{|x|^2}\right)\, dx+o(1).
\end{equation}
From the definition of $\mu_{\gamma,s, \lambda}(\Omega)$, and the fact that $\mu_{\gamma,s}(\Omega)=\mu_{\gamma,s}(\rn)$, we have 
\begin{equation}
\mu_{\gamma,s, \lambda}(\Omega)\left(\int_\Omega \frac{|u|^{\crits}}{|x|^s}\, dx\right)^{\frac{2}{\crits}}\leq \int_\Omega\left(|\nabla u|^2-\gamma\frac{u^2}{|x|^2}-\lambda u^2\right)\, dx,\label{eq:pf:min:5}
\end{equation}
and 
\begin{equation}
\mu_{\gamma,s}(\rn)\left(\int_\Omega \frac{|\theta_i|^{\crits}}{|x|^s}\, dx\right)^{\frac{2}{\crits}}\leq \int_\Omega\left(|\nabla \theta_i|^2-\gamma\frac{\theta_i^2}{|x|^2}\right)\, dx+o(1). \label{eq:pf:min:6}
\end{equation}
Summing these two inequalities and using \eqref{eq:pf:min:3} and \eqref{eq:pf:min:4} and passing to the limit as $i\to +\infty$ yields
\begin{eqnarray*}
\mu_{\gamma,s, \lambda}(\Omega)\left(1-\left(\int_\Omega \frac{|u|^{\crits}}{|x|^s}\, dx\right)^{\frac{2}{\crits}}\right)&\geq& \mu_{\gamma,s}(\rn)\left(1-\int_\Omega \frac{|u|^{\crits}}{|x|^s}\, dx\right)^{\frac{2}{\crits}}.
\end{eqnarray*}
Since $\mu_{\gamma,s, \lambda}(\Omega)<\mu_{\gamma,s}(\rnp)$,
we finally conclude that $\int_\Omega \frac{|u|^{\crits}}{|x|^s}\, dx=1$. \\
 It remains to show that $u$ is an extremal for $\mu_{\gamma,s, \lambda}(\Omega)$. For that, note that since $\int_\Omega \frac{|u|^{\crits}}{|x|^s}\, dx=1$, the definition of $\mu_{\gamma,s, \lambda}(\Omega)$ 
yields that 
$$ \int_\Omega\left(|\nabla u|^2-\gamma\frac{u^2}{|x|^2}-\lambda u^2\right)\, dx\geq \mu_{\gamma,s, \lambda}(\Omega).$$
 The second term in the right-hand-side of \eqref{eq:pf:min:4} is nonnegative due to \eqref{eq:pf:min:6}. Therefore, we get that $\int_\Omega\left(|\nabla u|^2-\gamma\frac{u^2}{|x|^2}-\lambda u^2\right)\, dx= \mu_{\gamma,s, \gamma}(\Omega)$. This proves the claim and ends the proof of the first part of Proposition \ref{tool.bis}.

Now assume that $0<\lambda <\lambda_1:=\lambda_1(L_\gamma)$, then 
for all $u\in H^1_0(\Omega)\setminus\{0\}$, 
\begin{eqnarray*}
J^\Omega_{\gamma,s}(u)=\frac{\int_{\Omega} (|\nabla u|^2-\gamma \frac{u^2}{|x|^2}-\lambda u^2)dx}{(\int_{\Omega}\frac{u^{\crits}}{|x|^s}dx)^{\frac{2}{\crits}}}
&\geq& 
\left(1-\frac{\lambda}{\lambda_1(L_\gamma)}\right)\frac{\int_\Omega(|\nabla u|^2-\gamma \frac{u^2}{|x|^2})\, dx}{\left(\int_\Omega\frac{|u|^{\crits}}{|x|^s}\, dx\right)^{\frac{2}{\crits}}}\\
&\geq&  \left(1-\frac{\lambda}{\lambda_1(L_\gamma)}\right)\left(1-\frac{4\gamma}{(n-2)^4}\right)\mu_{0,s}(\Omega).
\end{eqnarray*}
 Therefore $\mu_{\gamma,s, \lambda}(\Omega)>0$.
 \end{proof}

\section{Existence of extremals when either $s>0$ or $\{s=0$ and $\gamma \geq 0$\}}

In this section, we sketch the ideas behind the following result. Brezis-Nirenberg \cite{bn} pioneered this line of inquiry when $\gamma =0, s=0$ and $n\geq 4$. Janelli \cite{Jan} did the case where $0<\gamma < \frac{(n-2)^2}{4}-1$ and $s=0$, while Ruiz-Willem \cite{RW} considered the situation when $\gamma <0$. The remaining cases were dealt with in Ghoussoub-Robert \cite{gr5}.

\begin{theorem} Let $\Omega$ be a smooth bounded domain of $\rn$ such that $0\in\Omega$. Fix $\gamma<\frac{(n-2)^2}{4}$, $\lambda <\lambda_1(L_\gamma)$ and assume that either $s>0$ or $\left\{ s=0\hbox{ and }\gamma\geq 0\right\}$. 

\begin{enumerate}
\item If $\gamma\leq \frac{(n-2)^2}{4}-1$, then there are extremals for $\mu_{s,\gamma, \lambda}(\Omega)$ if and only if $\lambda>0$.
\item If $\gamma> \frac{(n-2)^2}{4}-1$, then there are extremals for $\mu_{s,\gamma, \lambda}(\Omega)$ if $m_{\gamma, -\lambda}(\Omega)>0$. 
\end{enumerate}
\end{theorem}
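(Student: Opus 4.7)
The plan is to apply Proposition \ref{tool.bis}: in each case it is enough to construct a trial function $\phi \in \huno \setminus \{0\}$ realizing $J^\Omega_{\gamma, s, \lambda}(\phi) < \mu_{\gamma, s}(\rn)$. Under the standing hypotheses on $(s,\gamma)$, the earlier attainability proposition for $\mu_{\gamma, s}(\rn)$ furnishes a positive extremal $U \in \dundeuxr$ with $U(x) \sim |x|^{-\bm}$ as $x \to 0$ and $U(x) \sim |x|^{-\bp}$ at infinity (for $s = \gamma = 0$, an Aubin--Talenti bubble). Set $U_\eps(x) := \eps^{-(n-2)/2} U(x/\eps)$ and fix $\eta \in C^\infty_c(\Omega)$ with $\eta \equiv 1$ on a neighborhood of $0$.

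For part (1), I would use the Brezis--Nirenberg-type trial function $\eta U_\eps$. A direct expansion yields
\[
J^\Omega_{\gamma, s, \lambda}(\eta U_\eps) = \mu_{\gamma, s}(\rn) + A_\eps - \lambda B_\eps + o(\max\{A_\eps, B_\eps\}),
\]
where $A_\eps = O(\eps^{\bp-\bm})$ comes from truncation by $\eta$ and $B_\eps \sim \|U_\eps\|_2^2$. The assumption $\gamma \leq \frac{(n-2)^2}{4} - 1$ is equivalent to $\bp - \bm \geq 2$, which is precisely the range where $U \in L^2(\rn)$ (with a logarithmic factor in the borderline case): this gives $B_\eps \sim c\eps^2$ or $c\eps^2 \ln \eps^{-1}$, so $B_\eps$ dominates $A_\eps$. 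For $\lambda > 0$, the term $-\lambda B_\eps$ produces the required strict inequality. Conversely, if $\lambda \leq 0$, then $-\lambda \int u^2 \geq 0$ forces $\mu_{\gamma, s, \lambda}(\Omega) \geq \mu_{\gamma, s}(\Omega) = \mu_{\gamma, s}(\rn)$, while the same test sequence (whose $B_\eps$ vanishes in the limit) gives the reverse inequality; any putative extremal extended by zero would then be an extremal of $\mu_{\gamma, s}(\rn)$, contradicting the fact that the profile \eqref{J} is strictly positive on $\rn \setminus \{0\}$.

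For part (2), $\bp - \bm < 2$, so $A_\eps$ and $\lambda B_\eps$ become of the same order $\eps^{\bp - \bm}$ and $\eta U_\eps$ alone no longer separates them. The remedy is to weave the Hardy-singular internal mass directly into the ansatz. Let $H \in C^\infty(\overline{\Omega}\setminus\{0\})$ be the positive solution of $(L_\gamma - \lambda) H = 0$ in $\Omega \setminus\{0\}$ with $H|_{\partial\Omega} = 0$, as provided by Proposition \ref{interior.mass}, normalized so that
\[
H(x) = c_1 |x|^{-\bp} + c_2 |x|^{-\bm} + o\bigl(|x|^{-\bm}\bigr) \quad \text{as } x \to 0,
\]
with $m_{\gamma, -\lambda}(\Omega) = c_2/c_1$. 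I would try a trial function of the form
\[
\phi_\eps := \eta U_\eps + \eps^{\frac{\bp-\bm}{2}}\bigl(H - c_1 \chi |x|^{-\bp}\bigr),
\]
where $\chi$ is an auxiliary cutoff equal to $1$ near $0$; this interpolates between $U_\eps$ near the singularity and a scaled regular part of $H$ away from it. A careful expansion should produce
\[
J^\Omega_{\gamma, s, \lambda}(\phi_\eps) = \mu_{\gamma, s}(\rn) - C\, m_{\gamma, -\lambda}(\Omega)\, \eps^{\bp - \bm} + o\bigl(\eps^{\bp-\bm}\bigr)
\]
with $C > 0$, at which point the positive mass hypothesis closes the case.

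The main obstacle will be this asymptotic expansion. Isolating the $m_{\gamma, -\lambda}(\Omega)$ contribution with the correct positive constant $C$ requires a delicate cancellation among the self-energy of $\eta U_\eps$ (with a priori divergent pieces of the form $\int |x|^{-2\bp}$ on shrinking annuli), the cross terms $\int \nabla U_\eps \cdot \nabla H$, and the errors generated by the cutoffs $\eta$ and $\chi$. My plan is to exploit the ODE satisfied by the radial profile of $U$, which permits explicit integration by parts against the homogeneous solutions $|x|^{-\bp}$ and $|x|^{-\bm}$ of $L_\gamma$, together with a Pohozaev identity for $L_\gamma - \lambda$ on the punctured region $\Omega \setminus B_r(0)$ passed to the limit $r \to 0$; this should extract the mass term with the desired sign and constant, in the spirit of the boundary-mass computations of Ghoussoub--Robert \cite{gr4}.
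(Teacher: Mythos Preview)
Your proposal is correct and follows essentially the same route as the paper. In the regime $\gamma\leq\frac{(n-2)^2}{4}-1$ the paper uses exactly the truncated bubbles $\eta U_\eps$ and obtains $J_{\gamma,s,\lambda}(\eta U_\eps)=\mu_{\gamma,s}(\rn)-\lambda C\eps^2+o(\eps^2)$ (with a $\ln\eps^{-1}$ factor at the threshold), while in the regime $\gamma>\frac{(n-2)^2}{4}-1$ it uses the corrected ansatz $v_\eps=\eta U_\eps+\eps^{(\bp-\bm)/2}\ell$ with $\ell:=H-\eta|x|^{-\bp}$, which is precisely your $\phi_\eps$ up to the inessential choice of cutoff; the resulting expansion $\mu_{\gamma,s}(\rn)-m_{\gamma,-\lambda}(\Omega)\eps^{\bp-\bm}+o(\eps^{\bp-\bm})$ matches your target.
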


\begin{proof}  We construct a minimizing sequence $u_\epsilon$ in $ H^1_0(\Omega)\setminus\{0\}$ for the functional 
$$J_{\gamma,s, \lambda}(u):=\frac{\int_\Omega\left(|\nabla u|^2-\gamma\frac{u^2}{|x|^2}-\lambda u^2\right)\, dx}{\left(\int_\Omega\frac{|u|^{\crits}}{|x|^s}\, dx\right)^{\frac{2}{\crits}}},$$
in such a way that $\mu_{s,\gamma, \lambda}(\Omega)<\mu_{s,\gamma}(\rn)$.

\medskip\noindent If either $s>0$ or $\gamma\geq 0$, then the infimum $\mu_{\gamma,s}(\rn)$ is achieved by 
the function 
$$U(x):=\frac{1}{\left(|x|^{\frac{(2-s)\bm}{n-2}}+|x|^{\frac{(2-s)\bp}{n-2}}\right)^{\frac{n-2}{2-s}}}
\hbox{ for }x\in\rn\setminus\{0\}.$$
Define the test-functions
$$u_\eps(x):=\eta(x) \eps^{-\frac{n-2}{2}}U(\eps^{-1}x)\hbox{ for all }x\in \Omega,$$
where $\eta\in C^\infty_c(\Omega)$ is such that $\eta(x)=1$ around $0\in\Omega$. A straightforward computation yields
\begin{equation}
J_{\gamma,s, \lambda }(u_\eps)=\mu_{\gamma,s}(\rn)+o(1) \quad \hbox{ as }\eps\to 0.
\end{equation}
Going further in the expansion, one can show the following:

\medskip\noindent{\bf Claim 1:} If $\gamma<\frac{(n-2)^2}{4}-1$, then 
\begin{equation}
J_{\gamma,s, \lambda}(u_\eps)=\mu_{\gamma,s}(\rn)-\lambda C\eps^2+o(\eps^2) \quad \hbox{ as } \eps\to 0,
\end{equation}
where 
\begin{equation}
C:=\frac{\int_{\rn}U^2\, dx}{\left(\int_{\rn}\frac{U^{\crits}}{|x|^s}\, dx\right)^{\frac{2}{\crits}}}<+\infty.
\end{equation}
Note that $C<+\infty$ if and only if $\gamma<\frac{(n-2)^2}{4}-1$, which happens if and only if $\beta_+(\gamma)-\beta_-(\gamma) > 2$. This explains the obstruction on the dimension in this situation, since the $L^2-$concentration allows to overlook the role of the cut-off function. 

Pushing the expansion to the limit, we have the following 

\medskip\noindent{\bf Claim 2:} If $\gamma=\frac{(n-2)^2}{4}-1$, then 
\begin{equation}
J_{\gamma,s, \lambda}(u_\eps)=\mu_{\gamma,s}(\rn)-\lambda C'\eps^2\ln(\eps^{-1})+O(\eps^2)\hbox{ as }\eps\to 0,
\end{equation}
where $C'$ is a positive consatnt.

\medskip\noindent When $\gamma>\frac{(n-2)^2}{4}-1$, the above test functions do not suffice, and one needs more global test functions . We therefore  let $H\in C^\infty(\overline{\Omega}\setminus\{0\})$ as in Proposition \ref{interior.mass}. Up to multiplying by a constant, we assume that $C_1=1$. We let $\ell \in H^1_0(\Omega)\cap C^0(\Omega)$ be such that
$$H(x)=\frac{\eta(x)}{|x|^{\bp}}+\ell(x)\hbox{ for all }x\in\Omega.$$
Note that $\ell (x)= \frac{m_{\gamma, -\lambda}(\Omega)}{|x|^{\bm}} +o(\frac{1}{|x|^{\bm}})$, where $m_{\gamma,-\lambda}(\Omega)$ is the Hardy-interior mass. The test-functions can be taken in this case to be
\begin{equation}
v_\eps(x):=u_\eps(x) + \eps^{\frac{\bp-\bm}{2}}\ell (x) \quad \hbox{for all $x\in \Omega$.}
\end{equation} 
One can then show the following.\\
 
\noindent{\bf Claim 3:} If $\frac{(n-2)^2}{4}-1<\gamma <\frac{(n-2)^2}{4}$, then 
 \begin{equation}
 J_{\gamma,s, \lambda}(u_\eps)=\mu_{\gamma,s}(\rn)-m_{\gamma, -\lambda}(\Omega)\eps^{\bp-\bm}+o\left(\eps^{\bp-\bm}\right) \quad \hbox{as $\eps\to 0$}.
 \end{equation}
 Note that in this case $\beta_+(\gamma)-\beta_-(\gamma) < 2$.
\end{proof}

\section{Existence of extremals when $s=0$ and $\gamma <0$}

Recall from the introduction that $R_{\gamma,\lambda}(x_0)$ is the Robin function at $x_0$, that is the value at $x_0$ of the regular part of the Green's function of $-\Delta-\gamma|x|^{-2}-\lambda$ at $x_0$.
We sketch the proof of the remaining cases.

\begin{theorem} Let $\Omega$ be a smooth bounded domain of $\rn$ such that $0\in\Omega$. Fix $\gamma<\frac{(n-2)^2}{4}$, $\lambda <\lambda_1(L_\gamma)$ and assume that  $s=0$ and $\gamma<0$.

\begin{enumerate}
\item If $n\geq 4$, then there are extremals for $\mu_{s,\gamma, \lambda}(\Omega)$ iff $\lambda>\frac{|\gamma|}{\max_{x\in \Omega}|x|^2}$.
\item If $n=3$, then there are extremals for $\mu_{s,\gamma, \lambda}(\Omega)$ provided there exists $x_0$ in $\Omega\setminus\{0\}$ such that $R_{\gamma,-\lambda}(x_0)>0$. 

\end{enumerate}

\end{theorem}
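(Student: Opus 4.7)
The strategy follows Proposition~\ref{tool.bis}: since $s=0$ and $\gamma<0$, one has $\mu_{\gamma,0}(\rn)=\mu_{0,0}(\rn)=K(n,2)^{-2}$ by Remark~\ref{used} combined with the discussion preceding Corollary~\ref{coro:cone:1}, so it is enough to exhibit a test function $w\in\huno$ with $J^\Omega_{\gamma,0,\lambda}(w)<K(n,2)^{-2}$; the Proposition then delivers an extremal. Conversely, for the necessity direction in~(1) one shows via a pointwise comparison that this strict inequality cannot hold.

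For~(1), assume first $\lambda>|\gamma|/r(\Omega)$ and pick $x_0\in\Omega$ with $|x_0|^2$ close enough to $r(\Omega)$ that $|\gamma|/|x_0|^2<\lambda$. Plug into $J^\Omega_{\gamma,0,\lambda}$ the standard Aubin-Talenti bubbles centered at $x_0$,
\[
u_\eps(x)=\eta(x)\left(\frac{\eps}{\eps^2+|x-x_0|^2}\right)^{(n-2)/2},
\]
with $\eta\in C^\infty_c(\Omega)$ identically $1$ near $x_0$. Since $x_0\neq 0$, the weight $|x|^{-2}$ is smooth on $\mathrm{supp}(u_\eps)$ for small $\eps$, and
\[
\int_\Omega\frac{u_\eps^2}{|x|^2}\,dx=\frac{1}{|x_0|^2}\int_\Omega u_\eps^2\,dx+o(\theta_n(\eps)),
\]
where $\theta_n(\eps)=\eps^2$ for $n\geq 5$ and $\theta_4(\eps)=\eps^2\ln\eps^{-1}$. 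Classical Aubin--Br\'ezis--Nirenberg estimates (\cite{aubin},\cite{bn}) give
\[
J^\Omega_{\gamma,0,\lambda}(u_\eps)=\frac{1}{K(n,2)^2}-C\left(\lambda-\frac{|\gamma|}{|x_0|^2}\right)\theta_n(\eps)+o(\theta_n(\eps)),\quad C>0,
\]
which is strictly below $K(n,2)^{-2}$ for $\eps$ small. Conversely, if $\lambda\leq|\gamma|/r(\Omega)$ then $|\gamma|/|x|^2-\lambda\geq 0$ on $\Omega$, so for every $u\in\huno$,
\[
\int_\Omega|\nabla u|^2-\gamma\int_\Omega\frac{u^2}{|x|^2}-\lambda\int_\Omega u^2=\int_\Omega|\nabla u|^2+\int_\Omega\left(\frac{|\gamma|}{|x|^2}-\lambda\right)u^2\geq\int_\Omega|\nabla u|^2,
\]
whence $\mu_{\gamma,0,\lambda}(\Omega)\geq\mu_{0,0}(\Omega)=K(n,2)^{-2}$. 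Combined with the upper bound $\mu_{\gamma,0,\lambda}(\Omega)\leq\mu_{\gamma,0}(\Omega)\leq K(n,2)^{-2}$ from Remark~\ref{used}, equality holds, and an extremal would force $\int_\Omega(|\gamma|/|x|^2-\lambda)u^2\,dx=0$, hence $u\equiv 0$ a.e.\ (the integrand is strictly positive off a set of measure zero), a contradiction.

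For~(2), in dimension three the $L^2$-mass of a bubble is only $O(\eps)$ and gets swallowed by the error terms in the naive expansion above, so one replaces $u_\eps$ by a Schoen-type test function built from the Green's function. Let $G$ be the Green's function of $L_\gamma-\lambda$ on $\Omega$ with pole at $x_0\in\Omega\setminus\{0\}$, normalized so that
\[
G(x)=c_{\gamma,-\lambda}(x_0)\left(\frac{1}{|x-x_0|}+R_{\gamma,-\lambda}(\Omega,x_0)\right)+o(1)\qquad\hbox{as }x\to x_0.
\]
Define $v_\eps$ to coincide with an Aubin bubble $U_\eps$ on a small ball $B_\delta(x_0)\subset\Omega\setminus\{0\}$ and with $\eps^{1/2}G$ outside $B_{2\delta}(x_0)$, interpolating on the annulus. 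A matched asymptotic expansion, exploiting the Pohozaev/Schoen boundary identity on the gluing annulus, then yields
\[
J^\Omega_{\gamma,0,\lambda}(v_\eps)=\frac{1}{K(3,2)^2}-C\,R_{\gamma,-\lambda}(\Omega,x_0)\,\eps+o(\eps)
\]
with $C>0$; positivity of the Robin function thus gives $J^\Omega_{\gamma,0,\lambda}(v_\eps)<K(3,2)^{-2}$ for $\eps$ small.

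The main obstacle is the $n=3$ construction: one must first establish existence and the two-term asymptotic expansion of the Green's function of $L_\gamma-\lambda$ at an interior point $x_0\neq 0$ (non-trivial because the Hardy potential sits at the distinct point $0$ and is not in the Kato class), and then push the matched expansion of $J^\Omega_{\gamma,0,\lambda}(v_\eps)$ far enough to read off the Robin function as the coefficient of $\eps$ with the correct sign. This is the Hardy-Schr\"odinger analogue of Schoen's positive-mass strategy for the low-dimensional Yamabe problem, where the delicate cancellation between the bubble and Green's function contributions on the gluing annulus is what reveals the mass term.
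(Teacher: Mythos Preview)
Your proposal is correct and follows essentially the same route as the paper: the same bubble test functions centered at $x_0\neq 0$ for $n\geq 4$, the same pointwise comparison for the necessity direction, and the same Schoen-type positive-mass strategy for $n=3$. The only cosmetic difference is in the $n=3$ construction: you describe a gluing of the Aubin bubble with $\eps^{1/2}G$ across an annulus (Schoen's original style), whereas the paper uses the additive test function $u_\eps(x)=\eta(x)\bigl(\tfrac{\eps}{\eps^2+|x-x_0|^2}\bigr)^{1/2}+\eps^{1/2}\beta(x)$, where $\beta$ is the regular part of the Green's function; both yield the same expansion $J=\mu_{0,0}(\rn)-R_{\gamma,-\lambda}(\Omega,x_0)\,\eps+o(\eps)$.
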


\begin{proof} By Theorem \ref{th:ext:cone}, this is the case when $\mu_{0, 0}(\rn)=\mu_{\gamma, 0}(\rnp)$. 
Consider the following known extremal for $\mu_{0,0}(\rn)$, 
$$U(x):=\frac{1}{\left(1+|x|^{2}\right)^{\frac{n-2}{2}}} \hbox{ for }x\in\rn.$$
Fix $x_0\in\Omega$, $x_0\neq 0$, and define the test-function
$$u_\eps(x):=\eta(x) \eps^{-\frac{n-2}{2}}U(\eps^{-1}(x-x_0))\hbox{ for all }x\in \Omega,$$
where $\eta\in C^\infty_c(\Omega)$ is such that $\eta(x)=1$ around $x_0\in\Omega$. A straightforward computation yields
$$J_{\gamma,0, \lambda}(u_\eps)=\mu_{0,0}(\rn)+o(1)\hbox{ as }\eps\to 0,$$
which yields that $\mu_{\gamma, 0, \lambda}(\Omega)\leq \mu_{0,0}(\rn)$.

\medskip\noindent Note now that if $\lambda\leq \frac{|\gamma|}{\max_{x\in \Omega}|x|^2}$, then $\lambda+\frac{\gamma}{|x|^2}\leq 0$ for all $x\in\Omega$, and therefore $\mu_{\gamma, 0, \lambda}(\Omega)\geq \mu_{0,0}(\Omega)$. We therefore have equality, and there is no extremal  for $\mu_{\gamma, 0, \lambda}(\Omega)$ since the extremals on $\rn$ are rescaled and translated versions of $U$.

On the other hand, one can argue as in Aubin \cite{aubin} and prove the following

\medskip\noindent {\bf Claim 1:}   If  $x_0\in\Omega\setminus \{0\}$ is such $\lambda+\frac{\gamma}{|x_0|^2}>0$ and $n\geq 5$, then 
\begin{equation}
J_{\gamma,0, \lambda}(u_\eps)=\mu_{\gamma,0}(\rn)-\left(\lambda+\frac{\gamma}{|x_0|^2} \right)C\eps^2+o(\eps^2)\hbox{ as }\eps\to 0,
\end{equation}
where $$C:=\frac{\int_{\rn}U^2\, dx}{\left(\int_{\rn}U^{\crit}\, dx\right)^{\frac{2}{\crit}}}<+\infty.$$
Note that $C<+\infty$ iff $n\geq 4$, in which case the $L^2-$concentration again allows to overlook the cut-off function. 

For $n=4$ one needs to push the expansion further.

\medskip\noindent {\bf Claim 2:}   If  $x_0\in\Omega\setminus \{0\}$ is such $\lambda+\frac{\gamma}{|x_0|^2}>0$, and $n =4$, then 
\begin{equation}
J_{\gamma,0, \lambda}(u_\eps)=\mu_{\gamma, 0}(\rn)-\left(\lambda+\frac{\gamma}{|x_0|^2} \right) C'\eps^2\ln(\eps^{-1})+O(\eps^2) \quad \hbox{ as }\eps\to 0,
\end{equation}
where $C'$ is a positive constant.

\medskip\noindent In order to deal with the case $n=3$, global test-functions are again required. We let $G_{x_0}\in C^\infty(\overline{\Omega}\setminus\{0\})$ be the Green's function of $-\Delta-\lambda-\gamma|x|^{-2}$ at $x_0$. Up to multiplying by a constant, we may assume that $C_1=1$. Let $\beta\in H^1_0(\Omega)\cap C^0(\Omega)$ be such that
$$G_{x_0}(x)=\omega_2^{-1}\left(\frac{\eta(x)}{|x-x_0|}+\beta(x)\right)\hbox{ for all }x\in\Omega\setminus\{x_0\}.$$
Note that $\beta(x_0)=R_{\gamma,\lambda}(x_0)$ is the Robin function at $x_0$. 

Define now the test-function
\begin{equation}
u_\eps(x):=\eta(x)\left(\frac{\eps}{\eps+|x|^{2}}\right)^{\frac{1}{2}}+\eps^{\frac{1}{2}} \beta(x)\quad \hbox{
for all $x\in \Omega$. }
\end{equation}
One can then show the following 

\medskip\noindent {\bf Claim 3:}   If  $x_0\in\Omega\setminus \{0\}$ is such $\lambda+\frac{\gamma}{|x_0|^2}>0$ and $n =3$, then 
\begin{equation}
J_{\gamma,0, \lambda}(u_\eps)=\mu_{\gamma,0}(\rn)-R_{\gamma,\lambda}(x_0)\eps +o\left(\eps\right) \quad \hbox{ as }\eps\to 0.
\end{equation}
\end{proof}

\part{When $0$ is a boundary singularity for the operator $L_\gamma$}

\section{\, Analytic conditions for the existence of extremals when $0\in \partial \Omega$}

As mentioned in the introduction, the case when the singularity $0\in \partial \Omega$ is  more intricate as far as  the operator $-\Delta -\frac{\gamma}{|x|^2}$ is concerned. This is already apparent in the following linear situation.

\begin{proposition}\label{prop:gamma} $\gamma_H$ satisfies the following properties on the class of bounded smooth domains $\Omega$ in $\rn$ such that $0\in \partial \Omega$:
\begin{enumerate}

\item If $0\in \partial \Omega$, then $\frac{(n-2)^2}{4}<\gamma_H(\Omega)\leq\frac{n^2}{4}$. 
\item $\gamma_H(\rnp)=\frac{n^2}{4}$, and $\gamma_H(\Omega)=\frac{n^2}{4}$ for every $\Omega$ such that $0\in \partial \Omega$ and $\Omega \subset \rnp$.
\item We have $\inf\{\gamma_H(\Omega);\,  0\in\partial\Omega\}=\frac{(n-2)^2}{4}$.

\item For every $\epsilon>0$, there exists a smooth domain $\rnp\subset \Omega_\epsilon \subset \rn$ such that $0\in \partial\Omega_\epsilon$ and $\frac{n^2}{4}-\epsilon \leq \gamma_H(\Omega_\epsilon) < \frac{n^2}{4}$. 

\end{enumerate}

\end{proposition}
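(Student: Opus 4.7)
The plan is to combine two basic tools: the domain monotonicity $\Omega\subset\Omega'\Rightarrow \gamma_H(\Omega)\geq\gamma_H(\Omega')$ (extension by zero), and Proposition~\ref{cowan}, which turns lower bounds on $\gamma_H$ into the construction of a positive function $\rho$ with a prescribed quotient $-\Delta\rho/\rho$. A third ingredient is the attainment result of Ghoussoub--Robert \cite{gr4} already quoted in this section: for $\Omega$ smooth with $0\in\partial\Omega$, if $\gamma_H(\Omega)<n^2/4$ then $\gamma_H(\Omega)$ is attained. These three tools will carry parts (1) and (2); parts (3) and (4) will then rely on explicit domain constructions.

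For (2), Proposition~\ref{cowan} applied to $\rho(x)=x_1|x|^{-n/2}$ on $\rnp$ gives $-\Delta\rho/\rho=n^2/(4|x|^2)$, which is exactly the computation already written down in \eqref{rnpk} for $k=1$; hence $\gamma_H(\rnp)\geq n^2/4$, and the reverse inequality follows from the upper bound in (1) applied to a bounded exhaustion of $\rnp$. If $\Omega\subset\rnp$, then monotonicity gives $\gamma_H(\Omega)\geq\gamma_H(\rnp)=n^2/4$, and (1) gives the matching upper bound. For the upper bound in (1), I would use the smoothness of $\partial\Omega$ at $0$ to find a diffeomorphism $\varphi:U\cap\rnp\to V\cap\Omega$ with $\varphi(0)=0$ and $d\varphi_0=\mathrm{Id}$; the pullback by $\varphi$ of a minimizing family for $\gamma_H(\rnp)$ concentrating at $0$ then yields a minimizing family on $\Omega$ with Rayleigh quotient $n^2/4+o(1)$. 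The strictness in the lower bound of (1) is where the attainment result enters: monotonicity against the Euclidean Hardy inequality gives $\gamma_H(\Omega)\geq (n-2)^2/4$; if equality held, then since $(n-2)^2/4<n^2/4$ the quantity would be attained by some $\phi\in H^1_0(\Omega)$, and the extension of $\phi$ by zero to $H^1_0(\rn)$ would realize the non-attainable infimum $\gamma_H(\rn)=(n-2)^2/4$, a contradiction.

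For (3), take $\Omega_\epsilon:=B_R(0)\setminus\overline{B_\epsilon(\epsilon e_1)}$, so that $0\in\partial\Omega_\epsilon$; part (1) gives $\gamma_H(\Omega_\epsilon)>(n-2)^2/4$, and I would prove the matching upper bound by testing with $u_{\delta,\epsilon}=(1-\chi_\epsilon)\eta(x)|x|^{-(n-2)/2+\delta}$, where $\chi_\epsilon$ is the $H^1_0$-capacity potential of $\overline{B_\epsilon(\epsilon e_1)}$. The Rayleigh quotient of the unperturbed $\eta(x)|x|^{-(n-2)/2+\delta}$ on $B_R$ tends to $(n-2)^2/4$ as $\delta\to 0$ by a direct radial ODE computation, and the capacity correction $\|\chi_\epsilon\|_{H^1}^2=O(\epsilon^{n-2})$ is negligible after choosing $\epsilon$ small once $\delta$ has been fixed. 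For (4), take $\Omega_\epsilon:=\{x:x_1>-f_\epsilon(x')\}$, where $f_\epsilon\geq 0$ is smooth, compactly supported, nontrivial, with $f_\epsilon(0)=0$ and $\|f_\epsilon\|_{C^2}\to 0$ as $\epsilon\to 0$, so that $\rnp\subsetneq\Omega_\epsilon$ and $0\in\partial\Omega_\epsilon$. Monotonicity gives $\gamma_H(\Omega_\epsilon)\leq\gamma_H(\rnp)=n^2/4$, and a perturbation argument using the minimizing family from (2) yields $\gamma_H(\Omega_\epsilon)\to n^2/4$.

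The main obstacle is the strict inequality $\gamma_H(\Omega_\epsilon)<n^2/4$ in (4): monotonicity alone only yields $\leq$, so one must exhibit either an explicit function in $H^1_0(\Omega_\epsilon)$ with Rayleigh quotient strictly below $n^2/4$, or show that $\gamma_H(\Omega_\epsilon)$ is attained (which forces strictness via the attainment dichotomy). My proposed route is the first: start from the formal half-space profile $\rho(x)=x_1|x|^{-n/2}$, modify it across the bump $\Omega_\epsilon\setminus\overline{\rnp}$ to a positive function on $\Omega_\epsilon$ vanishing on $\partial\Omega_\epsilon$, and regularize near $0$ and at infinity. Each modification contributes a correction to the quotient; the crux is checking that the leading bump contribution, where the domain strictly contains $\rnp$ while $\rho$ only formally saturates $n^2/4$ on $\rnp$, is strictly negative. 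This is the analogue in the $\gamma_H$-setting of the standard variational fact that enlarging a domain with an unattained infimum strictly decreases the constant, and I would make it quantitative by a Taylor expansion of the quotient in the amplitude of $f_\epsilon$.
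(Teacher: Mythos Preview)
Your overall strategy---monotonicity, the supersolution criterion of Proposition~\ref{cowan}, and explicit domain constructions---is the paper's as well; the differences are in the specific constructions.

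For the upper bound in (1), the paper's route is shorter than your boundary-chart pullback: since $\partial\Omega$ is smooth at $0$ there is an interior tangent ball $B_r\subset\Omega$ with $0\in\partial B_r$, and one checks directly that $\gamma_H(B_r)=n^2/4$ (e.g.\ via an inversion sending $B_r$ to $\rnp$, or via Proposition~\ref{cowan} with an appropriate $\rho$); monotonicity then gives $\gamma_H(\Omega)\le\gamma_H(B_r)=n^2/4$. Your attainment-based argument for the \emph{strict} lower bound in (1) is clean and is not spelled out in the paper's sketch. For (3), the paper removes a thin half-cylinder $\{x_1\le 0,\ |x'|\le\delta\}$ from a domain containing $0$ in its interior, rather than a small tangent ball; both reductions are capacity estimates of the same flavor, and the paper then passes to smooth domains by monotone sandwiching. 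For (4), the paper's construction is essentially yours---a graph perturbation of $\partial\rnp$ fixing $0$---with the limit $\gamma_H(\tilde\Omega_t)\to n^2/4$ coming from the fact that the perturbing diffeomorphism is $C^1$-close to the identity.

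You correctly flag the strict inequality $\gamma_H(\Omega_\epsilon)<n^2/4$ in (4) as the real obstacle; the paper's sketch does not detail it either and defers to \cite{gm} and \cite{gr4}. Your Taylor-expansion route is reasonable, but note that the formal saturating profile $x_1|x|^{-n/2}$ is not in $D^{1,2}(\rnp)$, so you must regularize at $0$ and at infinity \emph{before} perturbing across the bump, and then verify that the favorable first-order bump contribution dominates the cutoff errors; this bookkeeping is where the actual work lies. Also, your parenthetical ``attainment dichotomy'' shortcut does not apply here as stated: the attainment result (Theorem~\ref{tool}) is for bounded $\Omega$, whereas your $\Omega_\epsilon\supset\rnp$ is unbounded, so you cannot simply invoke it to force strictness.
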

The above mentioned properties of $\gamma_H$ were noted in \cite{gm} and \cite{gr4}. We sketch the proofs. We have already noted in section 1,  that $\gamma_H(\rn)=\frac{(n-2)^2}{4}$, while equation (\ref{rnpk}) yields that $\gamma_H(\rnp)=\frac{n^2}{4}$. It is also easy to see that if $B_r$ is a ball of radius $r$ such that $0\in \partial B_r$, then we also have $\gamma_H (B_r)=\gamma_H(\rnp)=\frac{n^2}{4}$. If now $\partial \Omega$ is smooth at $0\in \partial \Omega$, we can always find such a ball with $B_r\subset \Omega$, from which follows that $\gamma_H (\Omega)\geq \gamma_H (B_r)=\frac{n^2}{4}$.

To prove 3), one first shows that $\gamma_H(\rn)$ can be approached by the following nonsmooth conical domains. Let $\Omega_0$ be a bounded domain of $\rn$ such that $0\in\Omega_0$ (i.e., it is not on the boundary). Given $\delta>0$, define
$$\Omega_\delta:=\Omega_0\setminus \{(x_1,x')/\, x_1\leq 0\hbox{ and }|x'|\leq\delta\}.$$
For $\delta>0$ small enough, $0\in\partial\Omega$, and one can show that 
$
\lim_{\delta\to 0}\gamma_H(\Omega_\delta)=\frac{(n-2)^2}{4}.
$
Note that this works for $n\geq 4$. A different construction is needed for $n=3$. 
Now to check the infimum for smooth domains, note that for each $\delta>0$ small, there exists $\Omega_\delta'$ a smooth bounded domain of $\rn$ such that $\Omega_\delta\subset\Omega_\delta'$ and $0\in \Omega_\delta'$. Since $\Omega\mapsto\gamma_H(\Omega)$ is nonincreasing, we have that $\gamma_H(\rn)\leq \gamma_H(\Omega_\delta')\leq \gamma_H(\Omega_\delta)$ and therefore
$\limsup\limits_{\delta\to 0}\gamma_H(\Omega_\delta')=\frac{(n-2)^2}{4}.$

For 4) let $\varphi\in C^\infty(\rr^{n-1})$ be such that $0\leq\varphi\leq 1$, $\varphi(0)=0$, and $\varphi(x')=1$ for all $x'\in\rr^{n-1}$ such that $|x'|\geq 1$. For $t\geq 0$, define $\Phi_t(x_1,x'):=(x_1-t\varphi(x'), x')$ for all $(x_1,x')\in\rn$. Set $\tilde{\Omega}_t:=\Phi_t(\rnp)$. Now note that  $\lim_{\eps\to 0}\gamma_H(\tilde{\Omega}_t)=\gamma_H(\rnp)=\frac{n^2}{4}$. Since $\varphi\geq 0$, we have that $\rnp\subset \tilde{\Omega}_t$ for all $t>0$. It now suffices to take $\Omega_\eps:=\tilde{\Omega}_t$ for $t$ small enough.

As to whether $\gamma_H(\Omega)$ is attained or not, it depends -- in contrast with the case when $0\in \Omega$ -- on whether it is strictly less than  $\frac{n^2}{4}$. It is a particular case of the following general result, which is key to the sequel.

\begin{theorem} \label{tool} Let $\Omega$ be a bounded domain in $\rn$ ($n\geq 3$)  such that $0\in \partial \Omega$, and assume that $\gamma<\frac{n^2}{4}$ and $0\leq s\leq 2$. 
\begin{enumerate}
\item If $\mu_{\gamma, s}(\Omega)<\mu_{\gamma,s}(\rnp)$, then there are extremals for  $\mu_{\gamma, s}(\Omega)$. \\
 In particular,  If $\gamma_H(\Omega)< \frac{n^2}{4}$, then the best constant in the Hardy inequality on $\Omega$ is attained in $H^1_0(\Omega)$.

\item If $\gamma <\gamma_H(\Omega)$ then $\mu_{\gamma, s}(\Omega)>0$, and if also $\mu_{\gamma, s}(\Omega) <\mu_{\gamma,s}(\rnp)$ and $s <2$, then there exists a positive solution to the equation 
\begin{eqnarray} \label{pos:bis}
\left\{ \begin{array}{llll}
-\Delta u-\gamma \frac{u}{|x|^2}&=&\frac{u^{2^*(s)-1}}{|x|^s} \ \ &\text{on } \Omega\\
\hfill u&>&0 &\text{on }\partial \Omega\\
\hfill u&=&0 &\text{on }\partial \Omega.
\end{array} \right.
\end{eqnarray} 
\item If $\gamma_H(\Omega)<\gamma <\frac{n^2}{4}$ then $\mu_{\gamma, s}(\Omega)<0$, and  
 if $s < 2$ then there exists a positive solution to the equation 
\begin{eqnarray} \label{neg}
\left\{ \begin{array}{llll}
-\Delta u-\gamma \frac{u}{|x|^2}&=&-\frac{u^{2^*(s)-1}}{|x|^s} \ \ &\text{on } \Omega\\
\hfill u&>&0 &\text{on }\partial \Omega\\
\hfill u&=&0 &\text{on }\partial \Omega.
\end{array} \right.
\end{eqnarray} 
\end{enumerate}

\end{theorem}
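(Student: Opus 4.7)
Part (1) is the main analytic ingredient; parts (2) and (3) then follow by combining (1) with coercivity (or its failure) of $L_\gamma$ and with the strong maximum / Harnack principle. The argument for (1) follows the concentration-compactness template of Theorem \ref{th:ext:cone} and Proposition \ref{tool.bis}, with the key new feature that the model problem at $0\in\partial\Omega$ is $\rnp$ (not $\rn$).

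\textbf{Proof of (1).} Let $(u_k)\subset \huno$ be minimizing with $\int_\Omega|u_k|^{\crits}/|x|^s\,dx=1$ and $N(u_k):=\int_\Omega(|\nabla u_k|^2-\gamma u_k^2/|x|^2)\,dx\to \mu:=\mu_{\gamma,s}(\Omega)$. First I would show that $(u_k)$ is bounded in $\huno$: when $\gamma<\gamma_H(\Omega)$, Hardy gives $N(u_k)\geq(1-\gamma/\gamma_H(\Omega))\int|\nabla u_k|^2$; when $\gamma\in[\gamma_H(\Omega),n^2/4)$, if $\|\nabla u_k\|_2\to\infty$, the normalized $v_k:=u_k/\|\nabla u_k\|_2$ satisfies $\int v_k^2/|x|^2\to 1/\gamma$, which contradicts the fact that mass of $v_k^2/|x|^2$ can only concentrate at $0$ (the sole singularity of the weight), where the local Hardy constant is $n^2/4>\gamma$. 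Extract $u_k\rightharpoonup u$ weakly in $\huno$ and strongly in $L^q_{\mathrm{loc}}(\overline\Omega\setminus\{0\})$ for all $q<2^\star$, and set $\theta_k:=u_k-u$. The Brezis-Lieb lemma — applicable because $|x|^{-s}$ is locally bounded off $0$ — yields
\begin{equation*}
1=\alpha+\int_\Omega\frac{|\theta_k|^{\crits}}{|x|^s}\,dx+o(1),\qquad N(u_k)=N(u)+N(\theta_k)+o(1),
\end{equation*}
where $\alpha:=\int_\Omega|u|^{\crits}/|x|^s\,dx\in[0,1]$. Straightening $\partial\Omega$ near $0$ by a smooth diffeomorphism and rescaling exhibits $\theta_k$ as asymptotically modeled on $\dundeuxrnp$, so the Hardy-Sobolev inequality on $\rnp$ gives
\[\liminf_{k\to\infty}N(\theta_k)\geq \mu_{\gamma,s}(\rnp)(1-\alpha)^{2/\crits}.\]
Combined with the trivial bound $\mu\,\alpha^{2/\crits}\leq N(u)$, this yields
\[\mu\geq \mu\,\alpha^{2/\crits}+\mu_{\gamma,s}(\rnp)(1-\alpha)^{2/\crits}.\]
Together with the strict inequality $\mu<\mu_{\gamma,s}(\rnp)$, any $\alpha\in[0,1)$ is ruled out, so $\alpha=1$ and $u$ is an extremal. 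The Hardy consequence is the instance $s=2,\gamma=0$: the hypothesis $\gamma_H(\Omega)<n^2/4$ reads precisely $\mu_{0,2}(\Omega)<\mu_{0,2}(\rnp)$.

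\textbf{Proofs of (2) and (3).} For (2), $N(u)\geq(1-\gamma/\gamma_H(\Omega))\int|\nabla u|^2$ together with Hardy-Sobolev gives $\mu_{\gamma,s}(\Omega)>0$. When additionally $\mu<\mu_{\gamma,s}(\rnp)$, part (1) produces an extremal, and since $|u|$ yields the same quotient I may take $u\geq 0$. Rescaling (possible because $\mu>0$), the Euler-Lagrange equation is (\ref{pos:bis}); elliptic regularity ($s<2$) and the strong maximum principle / Hopf lemma for the coercive operator $L_\gamma$ give $u>0$ in $\Omega$. For (3), the Hardy consequence of (1) supplies a positive $\phi\in\huno$ with $\int|\nabla\phi|^2=\gamma_H(\Omega)\int\phi^2/|x|^2$; then $N(\phi)=(\gamma_H(\Omega)-\gamma)\int\phi^2/|x|^2<0$, so $\mu\leq J^\Omega_{\gamma,s}(\phi)<0$. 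Since $\gamma<n^2/4=\gamma_H(\rnp)$, the operator $L_\gamma$ is coercive on $\dundeuxrnp$, hence $\mu_{\gamma,s}(\rnp)>0>\mu$, and part (1) again yields an extremal; replacing it by its absolute value and rescaling (now with $\mu<0$) produces a nonnegative weak solution $v$ of (\ref{neg}). Rewriting (\ref{neg}) as $-\Delta v+(v^{\crits-2}/|x|^s-\gamma/|x|^2)v=0$ with coefficient locally bounded on $\Omega\setminus\{0\}$, Harnack's inequality delivers $v>0$ throughout $\Omega\setminus\{0\}$.

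\textbf{Main obstacle.} The delicate step is the concentration-compactness analysis in part (1) in the indefinite regime $\gamma\in[\gamma_H(\Omega),n^2/4)$: one must justify boundedness of minimizing sequences via a careful local half-space Hardy bound, verify the Brezis-Lieb lemma with the singular weight $|x|^{-s}$, and check that the diffeomorphic straightening of $\partial\Omega$ near $0$ produces exactly the sharp constant $\mu_{\gamma,s}(\rnp)$ (and not some modified constant) for the bubble contribution.
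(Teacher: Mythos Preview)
Your proposal is correct and follows essentially the same route as the paper. The only organizational difference is that the paper first isolates your ``main obstacle'' as a standalone lemma (Proposition~\ref{lem:ineq:eps}): for every $\eps>0$ there is $C_\eps>0$ with
\[
\Bigl(\int_\Omega \frac{|u|^{\crits}}{|x|^s}\,dx\Bigr)^{2/\crits}\le \bigl(\mu_{\gamma,s}(\rnp)^{-1}+\eps\bigr)\int_\Omega\Bigl(|\nabla u|^2-\gamma\frac{u^2}{|x|^2}\Bigr)\,dx+C_\eps\int_\Omega u^2\,dx,
\]
proved exactly by your boundary-straightening/localization argument; once this is in hand, the $s=2$ case gives boundedness of minimizing sequences uniformly across the indefinite range $\gamma\in[\gamma_H(\Omega),n^2/4)$ (cleaner than your ad hoc argument), and applying it to $\theta_k$ together with $\theta_k\to 0$ in $L^2(\Omega)$ yields precisely your bubble estimate $\liminf N(\theta_k)\ge \mu_{\gamma,s}(\rnp)(1-\alpha)^{2/\crits}$, after which the Br\'ezis--Lieb splitting concludes as you wrote.
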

Here again one starts by establishing the following improved inequality on bounded domains. See Ghoussoub-Robert \cite{gr4}. 

\begin{proposition}\label{lem:ineq:eps} Assume $\gamma<\frac{n^2}{4}$ and $s\in [0,2]$. If $\Omega$ is a bounded domain of $\rn$ such that $0\in \partial \Omega$, then for any $\eps>0$, there exists $C_\eps>0$ such that for all $u\in H^1_0(\Omega)$, 
\begin{equation}\label{ineq:sobo:eps:bis}
\left(\int_\Omega \frac{|u|^{\crits}}{|x|^s}\, dx\right)^{\frac{2}{\crits}}\leq (\frac{1}{\mu_{\gamma,s}(\rnp)}+\epsilon)\int_\Omega\left(|\nabla u|^2-\gamma\frac{u^2}{|x|^2}\right)\, dx+C_\eps\int_\Omega u^2\, dx.
\end{equation}
\end{proposition}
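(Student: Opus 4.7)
The plan is to isolate the boundary singularity at $0$ by a partition of unity, compare the piece supported near $0$ with the sharp inequality on $\rnp$, and control the piece supported away from $0$ by the ordinary Sobolev embedding (where the weights $|x|^{-s}$ and $|x|^{-2}$ are bounded). Fix a small $\delta>0$, to be chosen in terms of $\eps$ at the end, and pick nonnegative $\eta_1,\eta_2\in C^\infty(\rn)$ with $\eta_1^2+\eta_2^2\equiv 1$, $\eta_1\equiv 1$ on $B_{\delta/2}(0)$, and $\mathrm{supp}\,\eta_1\subset B_\delta(0)$. Expanding $|\nabla(\eta_i u)|^2$ and using $\eta_1\nabla\eta_1+\eta_2\nabla\eta_2=\tfrac{1}{2}\nabla(\eta_1^2+\eta_2^2)=0$ yields the exact identity
\begin{equation*}
\sum_{i=1}^{2}\int_\Omega\!\Bigl(|\nabla(\eta_i u)|^2-\gamma\tfrac{(\eta_i u)^2}{|x|^2}\Bigr)\,dx=\int_\Omega\!\Bigl(|\nabla u|^2-\gamma\tfrac{u^2}{|x|^2}\Bigr)\,dx+\int_\Omega(|\nabla\eta_1|^2+|\nabla\eta_2|^2)u^2\,dx,
\end{equation*}
and since $(\eta_1+\eta_2)^2\geq \eta_1^2+\eta_2^2=1$, the triangle inequality in $L^{\crits}(\Omega,|x|^{-s})$ gives, for any $\tau>0$,
\begin{equation*}
\Bigl(\int_\Omega\tfrac{|u|^{\crits}}{|x|^s}\,dx\Bigr)^{\frac{2}{\crits}}\leq (1+\tau)\Bigl(\int_\Omega\tfrac{|\eta_1 u|^{\crits}}{|x|^s}\,dx\Bigr)^{\frac{2}{\crits}}+\bigl(1+\tfrac{1}{\tau}\bigr)\Bigl(\int_\Omega\tfrac{|\eta_2 u|^{\crits}}{|x|^s}\,dx\Bigr)^{\frac{2}{\crits}}.
\end{equation*}

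For the near-zero piece, I use that $\partial\Omega$ is smooth at $0$: there is a diffeomorphism $\varphi$ from a neighborhood $U$ of $0$ onto a neighborhood of $0$ in $\rn$ with $\varphi(0)=0$, $d\varphi_0=\mathrm{Id}$, and $\varphi(U\cap\rnp)=\varphi(U)\cap\Omega$, as in the bubble construction discussed earlier. For $\delta$ small enough, $\mathrm{supp}(\eta_1 u)\subset\varphi(U)$, and I set $v:=(\eta_1 u)\circ\varphi\in\dundeuxrnp$, compactly supported in $U\cap\rnp$. The pulled-back metric satisfies $g_{ij}(y)=\delta_{ij}+O(|y|)$, and $|\varphi(y)|=|y|(1+O(|y|))$, $|\det d\varphi(y)|=1+O(|y|)$, so on $\mathrm{supp}\,v$ (where $|y|\leq C\delta$) a direct change of variables gives
\begin{equation*}
\int_\Omega\tfrac{|\eta_1 u|^{\crits}}{|x|^s}\,dx=(1+O(\delta))\int_{\rnp}\tfrac{|v|^{\crits}}{|y|^s}\,dy,\qquad Q_\Omega(\eta_1 u)=(1+o_\delta(1))\,Q_{\rnp}(v),
\end{equation*}
with $Q_\Omega(w):=\int_\Omega(|\nabla w|^2-\gamma w^2/|x|^2)\,dx$ and $Q_{\rnp}$ defined analogously. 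The second equality is the key point: one writes $\int|\nabla v|^2=(1+O(\delta))\int|\nabla(\eta_1 u)|^2$ and $\int v^2/|y|^2=(1+O(\delta))\int(\eta_1 u)^2/|x|^2$, then bounds the resulting error by $C\delta\int|\nabla(\eta_1 u)|^2$ using Hardy on $\rnp$ (which controls $\int v^2/|y|^2$ by $\int|\nabla v|^2$ since $\gamma_H(\rnp)=n^2/4$), and finally rewrites this in the form $o_\delta(1)Q_\Omega(\eta_1 u)$ using the coercivity $\int|\nabla(\eta_1 u)|^2\leq C_\gamma Q_\Omega(\eta_1 u)$, which is available for all $\gamma<n^2/4$ through the pull-back to $\rnp$. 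Applying the definition of $\mu_{\gamma,s}(\rnp)$ to $v$ then yields
\begin{equation*}
\Bigl(\int_\Omega\tfrac{|\eta_1 u|^{\crits}}{|x|^s}\,dx\Bigr)^{\frac{2}{\crits}}\leq\bigl(\mu_{\gamma,s}(\rnp)^{-1}+o_\delta(1)\bigr)\,Q_\Omega(\eta_1 u),
\end{equation*}
with $o_\delta(1)\to 0$ as $\delta\to 0$, uniformly in $u$.

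On the far piece $\mathrm{supp}(\eta_2 u)\subset\Omega\cap\{|x|\geq\delta/2\}$, both $|x|^{-s}$ and $|x|^{-2}$ are bounded by $(2/\delta)^2$, so the Sobolev embedding $H^1_0(\Omega)\hookrightarrow L^{\crit}(\Omega)\hookrightarrow L^{\crits}(\Omega)$ on the bounded set $\Omega$ gives $\bigl(\int_\Omega|\eta_2 u|^{\crits}/|x|^s\,dx\bigr)^{2/\crits}\leq K_\delta\int_\Omega|\nabla(\eta_2 u)|^2\,dx\leq K'_\delta\int_\Omega u^2\,dx+K''_\delta\,Q_\Omega(\eta_2 u)$, where the last step reintroduces the $-\gamma(\eta_2 u)^2/|x|^2$ term using the bounded weight. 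Plugging the two pieces into the triangle inequality and using the decomposition identity of the first paragraph to write $Q_\Omega(\eta_1 u)+Q_\Omega(\eta_2 u)=Q_\Omega(u)+\int_\Omega u^2(|\nabla\eta_1|^2+|\nabla\eta_2|^2)\,dx$, I would choose $\tau=\tau(\eps)$ so that $(1+\tau)\mu_{\gamma,s}(\rnp)^{-1}\leq\mu_{\gamma,s}(\rnp)^{-1}+\eps/2$, then $\delta=\delta(\eps)$ so that $(1+\tau)o_\delta(1)\leq\eps/2$; all other contributions are $L^2$-terms with $\delta$- and $\tau$-dependent but $u$-independent constants, which are absorbed into $C_\eps\int_\Omega u^2\,dx$. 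The main obstacle is the pull-back on the near-zero piece: the metric, Jacobian, and weight errors must all be packaged as a single $(1+o_\delta(1))$ multiplicative perturbation of $\mu_{\gamma,s}(\rnp)^{-1}Q_{\rnp}(v)$, uniformly in $u$, and this packaging relies crucially on $\gamma<\gamma_H(\rnp)=n^2/4$ to make $Q_{\rnp}$ control $\int|\nabla v|^2$.
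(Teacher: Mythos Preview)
Your strategy---localize near $0$, straighten the boundary by a diffeomorphism, and compare with the sharp inequality on $\rnp$---is exactly the paper's, and your near-zero estimate
\[
\Bigl(\int_\Omega\tfrac{|\eta_1 u|^{\crits}}{|x|^s}\,dx\Bigr)^{\frac{2}{\crits}}\leq\bigl(\mu_{\gamma,s}(\rnp)^{-1}+o_\delta(1)\bigr)\,Q_\Omega(\eta_1 u)
\]
is correct and is the heart of the matter. The gap is in how you recombine the two pieces.

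After your Young-type splitting, the far piece carries the factor $(1+1/\tau)K''_\delta\,Q_\Omega(\eta_2 u)$, with $K''_\delta=K_\delta$ your Sobolev constant on $\{|x|\geq\delta/2\}$. This is \emph{not} an $L^2$-term: $Q_\Omega(\eta_2 u)$ contains $\int_\Omega\eta_2^2|\nabla u|^2\,dx$, which cannot be absorbed into $C_\eps\int_\Omega u^2\,dx$. The IMS identity $Q_\Omega(\eta_1 u)+Q_\Omega(\eta_2 u)=Q_\Omega(u)+\int(|\nabla\eta_1|^2+|\nabla\eta_2|^2)u^2$ only helps if the two $Q$-terms carry the \emph{same} coefficient; yours carry $(1+\tau)(\mu^{-1}+o_\delta(1))$ and $(1+1/\tau)K_\delta$ respectively. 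Since you need $\tau$ small to make the first close to $\mu^{-1}$, the second blows up. In the worst case $s=0$ one has $K_\delta=K(n,2)^2=\mu_{0,0}(\rn)^{-1}$, which can equal $\mu_{\gamma,0}(\rnp)^{-1}$ (e.g.\ when $\gamma\leq 0$), so there is no slack at all for the factor $(1+1/\tau)$.

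The paper avoids the asymmetry altogether. It splits via the triangle inequality in $L^{\crits/2}$,
\[
\Bigl(\int_\Omega\tfrac{|u|^{\crits}}{|x|^s}\,dx\Bigr)^{\frac{2}{\crits}}=\bigl\|u^2\bigr\|_{\frac{\crits}{2},|x|^{-s}}\leq\bigl\|\sqrt{\eta}\,u\bigr\|_{\crits,|x|^{-s}}^2+\bigl\|\sqrt{1-\eta}\,u\bigr\|_{\crits,|x|^{-s}}^2,
\]
so both pieces enter with coefficient $1$. It then arranges that the far piece \emph{also} has gradient coefficient at most $\mu_{\gamma,s}(\rnp)^{-1}+\eps$: for $s=0$ this uses $K(n,2)^2\leq\mu_{\gamma,0}(\rnp)^{-1}$ (Remark~\ref{used}); for $0<s<2$ it interpolates $\|\sqrt{1-\eta}\,u\|_{\crits}^2\leq\nu\|\sqrt{1-\eta}\,u\|_{\crit}^2+C_\nu\|\sqrt{1-\eta}\,u\|_2^2$ and chooses $\nu$ so small that $\nu K(n,2)^2<\mu_{\gamma,s}(\rnp)^{-1}+\eps$; for $s=2$ the far term is already $L^2$. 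With equal coefficients, the two localized quadratic forms recombine into $(\mu^{-1}+\eps)\int_\Omega(|\nabla u|^2-\gamma u^2/|x|^2)\,dx$ plus $L^2$-remainders. Your argument is easily repaired along these lines: drop the $\tau$-split in favor of the $L^{\crits/2}$-triangle inequality, and treat the far piece by interpolation (for $s>0$) or by the inequality $K(n,2)^2\leq\mu_{\gamma,0}(\rnp)^{-1}$ (for $s=0$).
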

\noindent{\it Proof of  Proposition \ref{lem:ineq:eps}:} 
Fix $\epsilon>0$.
We first claim that there exists $\delta_\epsilon>0$ such that for all $u\in C^\infty_c(\Omega\cap B_{\delta_\epsilon}(0))$, 
\begin{equation}\label{eq:step:1}
\left(\int_{\Omega\cap B_{\delta_\epsilon}(0)} \frac{|u|^{\crits}}{|x|^s}\, dx\right)^{\frac{2}{\crits}}\leq (\mu_{\gamma,s}(\rnp)^{-1}+\epsilon)\int_{\Omega\cap B_{\delta_\epsilon}(0)}\left(|\nabla u|^2-\gamma\frac{u^2}{|x|^2}\right)\, dx.
\end{equation}
Indeed, for two open subsets of $\rn$ containing $0$, we may define a diffeomorphism $\varphi: U\to V$ 
such that $\varphi(0)=0$, $\varphi(U\cap\rnp)=\varphi(U)\cap \Omega$ and $\varphi(U\cap\partial\rnp)=\varphi(U)\cap \partial\Omega$. Moreover, we can also assume that $d\varphi_0$ is a linear isometry. In particular 
\begin{equation}\label{eq:pf:1.00}
|\varphi^\star\eucl-\eucl|(x)\leq C|x|\hbox{ and }|\varphi(x)|=|x|\cdot(1+O(|x|))
\end{equation}
for $x\in U$. If now $u\in C^\infty_c(\varphi(B_\delta(0))\cap\Omega)$, then $v:=u\circ\varphi\in C^\infty_c(B_\delta(0)\cap\rnp)$. If $g:=\varphi^{-1\star}\eucl$ denotes the metric induced by $\varphi$, then we get  from  \eqref{eq:pf:1.00}, 
 \begin{eqnarray}\label{eq:pf:2.00}
\qquad \left(\int_\Omega \frac{|u|^{\crits}}{|x|^s}\, dx\right)^{\frac{2}{\crits}}&\leq&\left(\int_{B_\delta(0)\cap\rnp} \frac{|v|^{\crits}}{|\varphi(x)|^s}|\hbox{Jac } \varphi(x)|\, dx\right)^{\frac{2}{\crits}}\nonumber\\
&\leq & (1+C\delta)\left(\int_{B_\delta(0)\cap\rnp} \frac{|v|^{\crits}}{|x|^s}\, dx\right)^{\frac{2}{\crits}}\nonumber\\
&\leq & (1+C\delta)\mu_{\gamma,s}(\rnp)^{-1}\int_{B_\delta(0)\cap\rnp}\left(|\nabla v|^2-\gamma\frac{v^2}{|x|^2} \right)\, dx\nonumber\\
&\leq & \frac{1+C\delta}{\mu_{\gamma,s}(\rnp)}\int_{\varphi(B_\delta(0))\cap\Omega}\left(|\nabla u|_{g}^2-\frac{\gamma u^2}{|\varphi^{-1}(x)|^2}\right)|\hbox{Jac } \varphi^{-1}(x)|\, dx\nonumber\\
&\leq&  (1+C_1\delta)\mu_{\gamma,s}(\rnp)^{-1}\int_{\Omega}\left(|\nabla u|^2-\gamma\frac{u^2}{|x|^2}\right)\, dx\nonumber\\
&& +C_2\delta \int_{\Omega}\left(|\nabla u|^2
+\frac{u^2}{|x|^2}\right)\, dx. 
\end{eqnarray}
We also have that
\begin{eqnarray*}
\int_\Omega\frac{u^2}{|x|^2}\, dx&=&\int_{\varphi(B_\delta(0)\cap\rnp)}\frac{u^2}{|x|^2}\, dx=\int_{B_\delta(0)\cap\rnp}\frac{v^2}{|\varphi(x)|^2}|\hbox{Jac}(\varphi)(x)|\, dx\\
&=&\int_{B_\delta(0)\cap\rnp}\frac{v^2}{|x|^2}(1+O(|x|)\, dx\leq (1+C_1\delta) \int_{\rnp}\frac{v^2}{|x|^2}\, dx
\end{eqnarray*}
and
\begin{eqnarray*}
\int_\Omega|\nabla u|^2\, dx&=&\int_{\varphi(B_\delta(0)\cap\rnp)}|\nabla u|^2\, dx=\int_{B_\delta(0)\cap\rnp}|\nabla v|^2_{\varphi^\star\eucl}|\hbox{Jac}(\varphi)(x)|\, dx\\
&=&\int_{B_\delta(0)\cap\rnp}|\nabla v|^2(1+O(|x|)\, dx\geq (1-C_2\delta) \int_{\rnp}|\nabla v|^2\, dx,
\end{eqnarray*}
where $C_1,C_2>0$ are independent of $\delta$ and $v$. Hardy's inequality \eqref{ineq:hardy:rk} then yields for all $u\in C^\infty_c(\varphi(B_\delta(0)\cap\rnp))$,
\begin{equation}\label{ineq:hardy:opt:1}
\frac{n^2}{4}\int_\Omega\frac{u^2}{|x|^2}\, dx\leq \frac{1+C_1\delta}{1-C_2\delta}\int_\Omega|\nabla u|^2\, dx\leq (1+C_3\delta)\int_{\Omega}|\nabla u|^2\, dx.
\end{equation}
Since $\gamma<\frac{n^2}{4}$, there exists then $c>0$ such that for $\delta>0$ small enough,
\begin{equation*}
c^{-1}\int_{\Omega}|\nabla u|^2\, dx\leq \int_{\Omega}\left(|\nabla u|^2-\gamma\frac{u^2}{|x|^2}\right)\, dx\leq c\int_{\Omega}|\nabla u|^2\, dx
\end{equation*}
for all $u\in C^\infty_c(\varphi(B_\delta(0))\cap\Omega)$. Plugging these latest inequalities in \eqref{eq:pf:2.00} yields \eqref{eq:step:1} by taking $\delta_\epsilon$ small enough. 

\medskip\noindent Consider now $\eta\in C^\infty(\rn)$ such that  $\sqrt{\eta},\sqrt{1-\eta}\in C^2(\rn)$, 
such that $\eta(x)=1$ for $x\in B_{\delta_\eps/2}(0)$ and $\eta(x)=0$ for $x\not\in B_{\delta_\eps}(0)$. We shall use the notation 
\[
\|w\|_{{p,|x|^{-s}}}=\left(\int_\Omega \frac{|w|^p}{|x|^s}\, dx\right)^{1/p}.
\]
For $u\in C^\infty_c(\Omega)$, use  H\"older's inequality to write
\begin{eqnarray*}
\left(\int_\Omega \frac{|u|^{\crits}}{|x|^s}\, dx\right)^{\frac{2}{\crits}}&=&\Vert u^2\Vert_{\frac{\crits}{2},|x|^{-s}}=\Vert \eta u^2+(1-\eta)u^2\Vert_{\frac{\crits}{2},|x|^{-s}}\\
&\leq & \Vert \eta u^2\Vert_{\frac{\crits}{2},|x|^{-s}} +\Vert (1-\eta) u^2\Vert_{\frac{\crits}{2},|x|^{-s}}\\
&\leq & \Vert \sqrt{\eta} u\Vert^2_{\crits,|x|^{-s}} +\Vert \sqrt{1-\eta} u\Vert^2_{\crits,|x|^{-s}}.
\end{eqnarray*} 
Since $\sqrt{\eta}u\in C^\infty_c(B_{\delta_\eps}(0)\cap\Omega)$, it follows from inequality \eqref{eq:step:1} 
 that
\begin{eqnarray}
\left(\int_\Omega \frac{|u|^{\crits}}{|x|^s}\, dx\right)^{\frac{2}{\crits}}
&\leq & (\mu_{\gamma,s}(\rnp)^{-1}+\epsilon)\int_\Omega\left(|\nabla (\sqrt{\eta}u)|^2-\gamma\frac{\eta u^2}{|x|^2}\right)\, dx\nonumber\\
&& +\Vert \sqrt{1-\eta} u\Vert^2_{\crits,|x|^{-s}}\nonumber\\
&\leq & (\mu_{\gamma,s}(\rnp)^{-1}+\epsilon)\int_\Omega\eta \left(|\nabla u|^2-\gamma\frac{u^2}{|x|^2}\right)\, dx + C\int_\Omega u^2\, dx\nonumber\\
&&+\Vert \sqrt{1-\eta} u\Vert^2_{\crits,|x|^{-s}}\label{eq:pf:3.00}
\end{eqnarray} 
\medskip\noindent {\bf Case 1: $s=0$.} Then $\crits=\crit$ and it follows from Sobolev's inequality that
\begin{eqnarray}\label{eq:pf:4.00}
\Vert \sqrt{1-\eta} u\Vert^2_{\crits,|x|^{-s}}&\leq& K(n,2)^2\int_\Omega |\nabla(\sqrt{1-\eta}u)|^2\, dx\nonumber \\
&\leq& K(n,2)^2\int_\Omega(1-\eta)|\nabla u|^2\, dx+C\int_\Omega u^2\, dx,
\end{eqnarray}
where $K(n,2)$ is the optimal Sobolev constant. 
 Since $s=0$, it follows from Remark \ref{used}  
 that $K(n,2)^2\leq \mu_{\gamma,s}(\rnp)^{-1}$, and from  \eqref{eq:pf:4.00} that
\begin{eqnarray}\label{eq:pf:5.00}
\Vert \sqrt{1-\eta} u\Vert^2_{\crits,|x|^{-s}}&\leq&  (\mu_{\gamma,s}(\rnp)^{-1}+\epsilon)\int_\Omega(1-\eta)\left(|\nabla u|^2-\gamma\frac{u^2}{|x|^2}\right)\, dx\nonumber\\
&&+C\int_\Omega u^2\, dx.
\end{eqnarray}
Plugging together \eqref{eq:pf:3.00} and \eqref{eq:pf:5.00} yields \eqref{ineq:sobo:eps:bis} when $s=0$. \\

\medskip\noindent {\bf Case 2: $0<s<2$.} We let $\nu>0$ be a positive number to be fixed later. Since $2<\crits<\crit$, the interpolation inequality yields the existence of $C_\nu>0$ such that
\begin{eqnarray*}
\Vert \sqrt{1-\eta} u\Vert^2_{\crits,|x|^{-s}}&\leq & C \Vert \sqrt{1-\eta} u\Vert^2_{\crits}\\
&\leq& C\left( \nu \Vert \sqrt{1-\eta} u\Vert^2_{\crit}+C_\nu \Vert \sqrt{1-\eta} u\Vert^2_{2}\right)\\
&\leq& C\left( \nu K(n,2)^2 \Vert \nabla(\sqrt{1-\eta} u)\Vert^2_{2}+C_\nu \Vert \sqrt{1-\eta} u\Vert^2_{2}\right).
\end{eqnarray*}
We choose $\nu>0$ such that  $\nu K(n,2)^2 <\mu_{\gamma,s}(\rnp)^{-1}+\epsilon$. Then we get \eqref{eq:pf:5.00} and we conclude \eqref{ineq:sobo:eps:bis} in the case when $2>s>0$ by combining it with \eqref{eq:pf:3.00}.

\medskip\noindent {\bf Case 3: $s=2$.} This is the easiest case, since then 
\[
\Vert \sqrt{1-\eta} u\Vert^2_{\crits,|x|^{-s}}=\int_\Omega \frac{((1-\eta)u)^2}{|x|^2}\, dx \leq C_\delta \int_\Omega u^2\, dx.
\]
This completes the proof of \eqref{ineq:sobo:eps:bis} for all $s\in [0,2]$.
\hfill $\Box$

The following corollary is an easy consequence of the above.

\begin{proposition}\label{prop:pptes:inf:bis} Let $\Omega$ be a bounded smooth domain such that $0\in \partial \Omega$ and assume $0\leq s\leq 2$.  
\begin{enumerate}
\item If $\gamma <\frac{n^2}{4}$, then  
\begin{equation}
\hbox{$ -\infty <\mu_{\gamma, s, \lambda}(\Omega) \leq \mu_{\gamma, s}(\rnp),$ for each $\lambda \in \rr$, }
 \end{equation}
 and
 \begin{equation}
 \sup_{\lambda\in\rr}\mu_{\gamma, s, \lambda}(\Omega)= \mu_{\gamma, s}(\rnp).
 \end{equation}
 In particular, 
\begin{equation} \label{lam}
\sup\limits_{\lambda \in \rr}\mu_{0, 2, \lambda}(\Omega)=\frac{n^2}{4}.
\end{equation}
\item If $\gamma >\frac{n^2}{4}$, then $\mu_{\gamma, s}(\Omega)=-\infty$.
\end{enumerate}
\end{proposition}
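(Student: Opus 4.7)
For Part (1), the finiteness $\mu_{\gamma,s,\lambda}(\Omega)>-\infty$ will follow directly from Proposition~\ref{lem:ineq:eps}: fix any $\epsilon>0$, rearrange \eqref{ineq:sobo:eps:bis}, and subtract $\lambda\int_\Omega u^2\, dx$ from both sides to get a lower bound on the numerator of $J^\Omega_{\gamma,s,\lambda}(u)$ in terms of $\bigl(\int_\Omega |u|^{\crits}/|x|^s\, dx\bigr)^{2/\crits}$ and $\int_\Omega u^2\, dx$; then dominate $\int_\Omega u^2\, dx$ by $C(\Omega,s)\bigl(\int_\Omega |u|^{\crits}/|x|^s\, dx\bigr)^{2/\crits}$ via H\"older with conjugate exponents $\crits/2$ and $\crits/(\crits-2)$ (the auxiliary integral of a positive power of $|x|$ over bounded $\Omega$ is finite) when $s\in[0,2)$, or by $\int_\Omega u^2\, dx\leq R(\Omega)^2\int_\Omega u^2/|x|^2\, dx$ when $s=2$. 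The upper bound $\mu_{\gamma,s,\lambda}(\Omega)\leq\mu_{\gamma,s}(\rnp)$ comes from standard concentration-at-$0$ test functions: pick $V\in C^\infty_c(\rnp)$ with $J^{\rnp}_{\gamma,s}(V)$ close to $\mu_{\gamma,s}(\rnp)$, set $V_\rho(y):=\rho^{-(n-2)/2}V(y/\rho)$, and transplant $\tilde V_\rho:=V_\rho\circ\varphi^{-1}$ via a boundary chart $\varphi$ with $\varphi(0)=0$ and $d\varphi_0=\mathrm{id}$. Since $J^{\rnp}_{\gamma,s}$ is scale-invariant under $V\mapsto V_\rho$, and the subcritical term satisfies $|\lambda|\int_\Omega \tilde V_\rho^2\, dx = O(\rho^2)\to 0$, we get $J^\Omega_{\gamma,s,\lambda}(\tilde V_\rho)\to J^{\rnp}_{\gamma,s}(V)$ as $\rho\to 0$.

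For the supremum identity, $\lambda\mapsto\mu_{\gamma,s,\lambda}(\Omega)$ is non-increasing, so $\sup_\lambda\mu_{\gamma,s,\lambda}(\Omega) = \lim_{\lambda\to-\infty}\mu_{\gamma,s,\lambda}(\Omega)$. Applying Proposition~\ref{lem:ineq:eps} with $\epsilon>0$ fixed and taking $\lambda$ sufficiently negative so that $-\lambda \geq C_\epsilon/(\mu_{\gamma,s}(\rnp)^{-1}+\epsilon)$, the $L^2$ remainder acquires a favourable sign and we obtain
\begin{equation*}
J^\Omega_{\gamma,s,\lambda}(u)\geq \bigl(\mu_{\gamma,s}(\rnp)^{-1}+\epsilon\bigr)^{-1}\quad\text{for all } u\in D^{1,2}(\Omega)\setminus\{0\}.
\end{equation*}
Letting $\lambda\to-\infty$ and then $\epsilon\to 0$ yields $\sup_\lambda\mu_{\gamma,s,\lambda}(\Omega)\geq\mu_{\gamma,s}(\rnp)$, which combined with the upper bound proves the equality. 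The identity \eqref{lam} is the specialization to $s=2,\gamma=0$, since $\mu_{0,2}(\rnp)=n^2/4$ by \eqref{rnpk}.

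For Part (2) (with $s\in[0,2)$, so $\crits>2$), the hypothesis $\gamma>n^2/4=\gamma_H(\rnp)$ means the Hardy inequality on $\rnp$ fails with constant $\gamma$, so there exists $V\in C^\infty_c(\rnp)$ with $N(V):=\int|\nabla V|^2-\gamma\int V^2/|x|^2 =: -a<0$ and $\mathrm{supp}(V)\subset\{c_0\leq|x|\leq C_0\}\cap\rnp$. Setting $V_j(x):=\Lambda^{j(n-2)/2}V(\Lambda^j x)$ for a fixed $\Lambda > C_0/c_0$, the annular supports of $V_1,\dots,V_k$ are pairwise disjoint, and since both $N(\cdot)$ and $\int|\cdot|^{\crits}/|x|^s$ are scale-invariant under $V\mapsto V_\rho$, additivity over disjoint supports gives $N(U_k)=-ka$ and $\int U_k^{\crits}/|x|^s\, dx = kD$ for $U_k:=\sum_{j=1}^k V_j$ and $D:=\int V^{\crits}/|x|^s\, dx$. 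Hence
\begin{equation*}
J^{\rnp}_{\gamma,s}(U_k) = -\frac{a}{D^{2/\crits}}\, k^{1-2/\crits} \longrightarrow -\infty \quad\text{as } k\to\infty,
\end{equation*}
since $1-2/\crits>0$. Transplanting each $U_k$ to $\Omega$ via $\varphi$ after a preliminary dilation $(U_k)_{\rho_k}$ with $\rho_k\to 0$ produces $\tilde U_k\in H^1_0(\Omega)$ with $J^\Omega_{\gamma,s}(\tilde U_k)=J^{\rnp}_{\gamma,s}(U_k)+o(1)$, so $\mu_{\gamma,s}(\Omega)=-\infty$.

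The main obstacle is ensuring that the chart distortion in Part (2) does not wash out the divergence $J^{\rnp}_{\gamma,s}(U_k)\to-\infty$. This is overcome by a two-step choice: for each fixed $k$, the scale-invariance of $J^{\rnp}_{\gamma,s}$ allows us to dilate $U_k$ to have arbitrarily small support contained in a region where $\varphi$ differs from the identity by an arbitrarily small amount, so the chart-induced errors are $o(1)$ for each fixed $k$. This yields $\mu_{\gamma,s}(\Omega)\leq J^{\rnp}_{\gamma,s}(U_k)$ for every $k$, and $k\to\infty$ gives the conclusion.
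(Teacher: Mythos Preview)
Your proof is correct and follows precisely the route the paper indicates: the paper states that Proposition~\ref{prop:pptes:inf:bis} is ``an easy consequence'' of Proposition~\ref{lem:ineq:eps}, and your argument for Part~(1) unpacks exactly that implication (rearranging \eqref{ineq:sobo:eps:bis} and absorbing the $L^2$ remainder either via H\"older against the weighted $L^{\crits}$ norm, or via the bound $|x|\leq R(\Omega)$ when $s=2$), together with the standard boundary-chart test functions for the upper bound. Your construction for Part~(2), summing disjointly supported dilates of a single function with negative Hardy quotient so that the numerator scales like $-ka$ while the denominator scales like $(kD)^{2/\crits}$, is a clean and correct way to force $J\to-\infty$; you are also right to restrict this argument to $s\in[0,2)$, since for $s=2$ one has $\crits=2$, the exponent $1-2/\crits$ vanishes, and indeed $\mu_{\gamma,2}(\Omega)=\gamma_H(\Omega)-\gamma$ is finite rather than $-\infty$.
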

Note that the case $\gamma=\frac{n^2}{4}$ is unclear as it seems that anything can happen at that value of $\gamma$. For example, if $\gamma_H(\Omega)<\frac{n^2}{4}$ then $\mu_{\frac{n^2}{4},s}(\Omega)<0$, while if $\gamma_H(\Omega)=\frac{n^2}{4}$ then $\mu_{\frac{n^2}{4},s}(\Omega)\geq 0$. It is our guess that many examples reflecting different regimes can be constructed.

\section{Analysis of the operator $L_\gamma=-\Delta -\frac{\gamma}{|x|^2}$ when $0\in \partial \Omega$}

In the sequel, we shall be looking for geometric conditions on $\Omega$ that insures that $\mu_{\gamma, s}(\Omega) <\mu_{\gamma,s}(\rnp)$. As before, 
we need to compute the functional $J^\Omega_{\gamma, s}$ at bubbles modeled on extremals for $\mu_{\gamma,s}(\rnp)$ and to make a Taylor expansion, hoping that one succeeds in getting below the energy threshold. But at this stage, a difficulty occurs: the extremals for $\mu_{\gamma,s}(\rnp)$ are not explicit, and therefore the coefficients that appear in the estimate of $J^\Omega_{\gamma, s}$ at the bubbles are not explicit enough. One needs to know more about the profile of the solutions for the linear and nonlinear equations involving the operator $L_\gamma$ on $\rnp$.  

As noted in the introduction, the most basic solutions for $L_\gamma u=0$, with $u=0$ on $\partial \rnp$ are of the form $u(x)=x_1|x|^{-\alpha}$, and a straightforward computation yields
$-\Delta (x_1|x|^{-\alpha})=\frac{\alpha(n-\alpha)}{|x|^2}x_1|x|^{-\alpha}$ on $\rnp$, 
which means that
\begin{equation}\nonumber
\hbox{$\left(-\Delta-\frac{\gamma}{|x|^2}\right)\left(x_1|x|^{-\alpha}\right)=0$ on $\rnp$,}
\end{equation}
for $\alpha\in \{\am,\ap\}$ where $\alpha_{\pm}(\gamma):=\frac{n}{2}\pm\sqrt{\frac{n^2}{4}-\gamma}.$ 
This turned out to be a general fact since we shall show that $x\mapsto d(x, \partial \Omega)|x|^{-\am}$ is essentially the profile at $0$ of any variational solution --positive or not-- of equations of the form $L_\gamma u=f(x, u)$ on a domain $\Omega$, as long as the nonlinearity $f$ is dominated by $C(|v|+\frac{|v|^{\crits-1}}{|x|^s})$.  

We use the following terminology. Say that $u\in \dundeux_{loc, 0}$ if there exists $\eta\in C^\infty_c(\rn)$ such that $\eta\equiv 1$ around $0$ and $\eta u\in \dundeux$. Note that if $u\in \dundeux_{loc, 0}$, then  $\eta u\in \dundeux$ for all $\eta\in C^\infty_c(\rn)$. Say that $u\in \dundeux_{loc, 0}$ is a weak solution to the equation 
\[-\Delta u=F\in \left(\dundeux_{loc, 0}\right)^\prime,
\]
 if for any $\varphi\in \dundeux$ and $\eta\in C^\infty_c(\rn)$, we have 
$\int_\Omega (\nabla u,\nabla (\eta\varphi))\, dx=\left\langle F ,\eta\varphi\right\rangle.$

The following theorem was established by Ghoussoub-Robert in \cite{gr4}.

\begin{theorem}[Optimal regularity and Generalized Hopf's Lemma] \label{Hopf} Let $\Omega$ be a smooth domain in $\rn$ such that $0\in \partial \Omega$, and let $f: \Omega\times \rr\to \rr$ be a Caratheodory function such that
$$|f(x, v)|\leq C|v| \left(1+\frac{|v|^{\crits-2}}{|x|^s}\right)\hbox{ for all }x\in \Omega\hbox{ and }v\in \rr.$$
Assume $\gamma<\frac{n^2}{4}$ and  let $u\in \dundeux_{loc, 0}$ be such that for some $\tau>0$, 
\begin{equation}\label{regul:eq.0}
-\Delta u-\frac{\gamma+O(|x|^\tau)}{|x|^2}u=f(x,u)\hbox{ weakly in }\dundeux_{loc, 0}.
\end{equation}
 Then, there exists $K\in\rr$ such that
\begin{equation}\label{eq:hopf.0}
\lim_{x\to 0}\frac{u(x)}{d(x,\partial\Omega)|x|^{-\am}}=K.
\end{equation}
Moreover, if $u\geq 0$ and $u\not\equiv 0$, we then have that $K>0$.
\end{theorem}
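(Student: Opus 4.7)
The plan is to reduce the problem to the half-space $\rnp$ via a boundary straightening, then combine elliptic regularity, comparison with an explicit barrier, and a blow-up analysis culminating in a Liouville-type classification on $\rnp$. First I would fix a diffeomorphism $\varphi$ sending a neighborhood of $0$ in $\rnp$ onto a neighborhood of $0$ in $\Omega$, with $\varphi(0)=0$ and $d\varphi_0=\mathrm{Id}$. The pullback $\tilde u := u\circ\varphi$ solves an equation of the same type, with $d(x,\partial\Omega)$ comparable to $x_1$ and with the perturbation absorbed into the $O(|x|^\tau)$ term. A Brezis--Kato bootstrap driven by the Hardy--Sobolev inequality and the growth bound $|f(x,u)|\leq C|u|(1+|u|^{\crits-2}/|x|^s)$ upgrades $\tilde u$ to $L^p_{loc}$ near $0$ for every $p<\infty$ and hence to $C^2_{loc}(\overline{V}\setminus\{0\})$ by Schauder theory.

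Next I would establish the sharp upper bound $|\tilde u(x)|\leq C\,x_1|x|^{-\am}$ on a small half-ball. The correct barrier is $\Phi(x):=x_1|x|^{-\am}$, for which a direct computation gives $L_\gamma\Phi=0$ on $\rnp$; the $O(|x|^\tau)$ perturbation of the potential only adds an error of order $|x|^{\tau-\am-1}\Phi$ that is negligible on a sufficiently small ball. Since $\am<n/2$, the function $\Phi$ lies in $H^1_{loc}$ near $0$ with $\Phi^2/|x|^2\in L^1_{loc}$, so one can set up a weighted coercive bilinear form coming from the Hardy inequality on $\rnp$ (which controls $\gamma<n^2/4$) and apply a Moser--Stampacchia comparison to deduce $|\tilde u|\leq M\Phi$, choosing $M$ large relative to the bounded trace of $\tilde u/\Phi$ on $\partial(B_r\cap\rnp)\setminus\{0\}$.

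The main obstacle is upgrading this pointwise bound to the existence of a unique limit $K$. My plan is a blow-up analysis: set $v_r(x):=r^{\am-1}\tilde u(rx)$ for $x\in(B_R\setminus B_{1/R})\cap\rnp$. The upper bound gives $|v_r(x)|\leq Cx_1|x|^{-\am}$ uniformly in $r$, and the growth of $f$ together with this bound forces the rescaled right-hand side to tend to $0$ in $L^p_{loc}$ as $r\to 0$, so elliptic estimates make $\{v_r\}$ precompact in $C^2_{loc}(\overline{\rnp}\setminus\{0\})$. Any cluster point $v_0$ satisfies $L_\gamma v_0=0$ on $\rnp$, $v_0=0$ on $\partial\rnp$, and $|v_0(x)|\leq Cx_1|x|^{-\am}$. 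Separating variables in spherical coordinates on $\rnp$, the only admissible radial exponents are $\am$ and $\ap$; the growth bound eliminates $\ap$, and the Dirichlet condition selects the first spherical harmonic $x_1/|x|$, so $v_0(x)=K\,x_1|x|^{-\am}$ for some $K\in\rr$. To show $K$ does not depend on the chosen sequence $r_k\to 0$, I would use an iterative Cauchy argument: expanding $\tilde u(x)=K_r\Phi(x)+w_r(x)$ at scale $r$, the function $w_r$ satisfies an equation of the same form with strictly improved decay, and the analogous blow-up at scale $r/2$ yields $|K_{r/2}-K_r|=O(r^\delta)$ for some $\delta>0$, making $(K_{2^{-k}})_k$ Cauchy.

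For the positivity statement when $u\geq 0$ and $u\not\equiv 0$, the strong maximum principle applied to $L_\gamma u=f(x,u)$, with the coefficient $f(x,u)/u$ in an admissible Kato-type class after the regularity step, gives $u>0$ in $\Omega$. Then on the half-sphere $\partial B_r\cap\overline{\Omega}$ for small $r>0$, the continuous positive function $\tilde u/\Phi$ is bounded below by some $m>0$, and $m\Phi$ is a sub-solution of the perturbed operator on $B_r\cap\rnp$ vanishing on $\partial\rnp\cap B_r$; comparison yields $\tilde u\geq m\Phi$ near $0$, whence $K\geq m>0$.
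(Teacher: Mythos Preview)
Your blow-up scheme in the third paragraph is a sound way to extract the limit \emph{once} the sharp pointwise bound $|\tilde u|\leq C\,x_1|x|^{-\am}$ is available, and it is close in spirit to what the paper does after reducing to the linear problem. The gap is in how you reach that bound.

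You assert that a Brezis--Kato bootstrap driven by the Hardy--Sobolev inequality pushes $\tilde u$ into $L^p_{loc}$ near $0$ for every $p<\infty$. This is exactly the step the paper flags as failing: because $\gamma/|x|^2$ is not in the Kato class, the Nash--Moser/Brezis--Kato iteration saturates at a finite exponent $p_0$ strictly below the optimal one, and never reaches all $p<\infty$. What the partial improvement \emph{does} buy is enough integrability to write $f(x,u)=a(x)u$ with $a$ small in the relevant norm, so that the nonlinearity can be absorbed into the $O(|x|^\tau)/|x|^2$ perturbation and the problem becomes \emph{linear}. The paper then builds super- and sub-solutions for that linear equation with the profile $d(x,\partial\Omega)|x|^{-\am}$, and the comparison closes cleanly. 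Your barrier step, by contrast, compares $|\tilde u|$ with $M\Phi$ while the critical term $|u|^{\crits-1}/|x|^s$ is still present on the right-hand side; this term cannot be absorbed into the coercive form without already knowing a pointwise bound on $u$ of the very type you are proving, so the argument is circular as written. The fix is precisely the paper's: bootstrap partially, linearize, and only then run the barrier comparison.
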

This theorem can be seen as a generalization of Hopf's Lemma \cite{gt} in the following sense: when $\gamma=0$ (and therefore $\am=0$), the classical Nash-Moser regularity scheme then yields that $u\in C^1_{loc}$, and when   $u\geq 0$, $u\not\equiv 0$, Hopf's comparison principle yields $\partial_\nu u(0)<0$, which is really a reformulation of \eqref{eq:hopf.0} in the case where  $\am=0$.

The proof of this theorem is quite interesting since, unlike the regular case (i.e., when $L_\gamma=L_0=-\Delta$) or in the   situation when the singularity $0$ is in the interior of the domain $\Omega$, the application of the standard Nash-Moser iterative scheme is not sufficient to obtain the required regularity. Indeed, the scheme only yields the existence of $p_0$, with $1<p_0<\frac{n}{\am-1}$ such that  $u\in L^p$ for all $p<p_0$. Unfortunately, $p_0$ does not reach $\frac{n}{\am-1}$, which is the optimal rate of integration needed to obtain the profile \eqref{eq:hopf.0} for $u$. However, the improved order $p_0$ is enough to allow for the inclusion of the nonlinearity $f(x,u)$ in the linear term of \eqref{regul:eq.0}. We are then reduced to the analysis of the linear equation, that is \eqref{regul:eq.0} with $f(x,u)\equiv 0$. When $u\geq 0$, $u\not\equiv 0$, we get the conclusion by constructing super- and sub- solutions to the linear equation behaving like \eqref{eq:hopf.0}.  

As a corollary, one obtains a relatively detailed description of the profile of variational solutions  of (\ref{one}) on $\rnp$, which improves greatly on a result of Chern-Lin \cite{CL5}, hence allowing us to construct sharper test functions and to prove existence of solutions for (\ref{one}) when $\gamma=\frac{n^2-1}{4}$. 

In order to deal with the remaining cases for $\gamma$, that is when $\gamma \in (\frac{n^2-1}{4}, \frac{n^2}{4})$, Ghoussoub-Robert \cite{gr4} prove the following result which describes the general profile of any positive solution of $L_\gamma u=a(x)u$, albeit variational or not.  

\begin{theorem}[Classification of singular solutions]
Assume $\gamma < \frac{n^2}{4}$ and let $u\in C^2(B_\delta(0)\cap (\overline{\Omega}\setminus\{0\}))$ be such that
\begin{equation}
\left\{\begin{array}{ll}
-\Delta u-\frac{\gamma+O(|x|^\tau)}{|x|^2}u=0 &\hbox{\rm in } \Omega\cap B_\delta(0)\\
u > 0&\hbox{\rm in } \Omega\cap B_\delta(0)\\
u=0&\hbox{\rm on } (\partial\Omega\cap B_\delta(0))\setminus \{0\}.\end{array}\right.
\end{equation}
Then, there exists $K>0$ such that
$$\hbox{either }u(x)\sim_{x\to 0}K\frac{d(x,\partial\Omega)}{|x|^{\am}}\quad \hbox{ or } \quad u(x)\sim_{x\to 0}K\frac{d(x,\partial\Omega)}{|x|^{\ap}}.$$
In the first case, the solution $u$ is variational; in the second case, it is not.
\end{theorem}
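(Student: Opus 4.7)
The plan is to first flatten the boundary via a $C^2$-diffeomorphism $\varphi$ with $\varphi(0)=0$ and $d\varphi_0=\mathrm{Id}$, reducing to the setting where $\Omega\cap B_\delta(0)=\rnp\cap B_\delta(0)$, while the equation keeps the form $-\Delta u-(\gamma+O(|x|^\tau))|x|^{-2}u=0$ on the half-ball with $u=0$ on the flat component. On $\rnp$, the unperturbed operator $-\Delta-\gamma|x|^{-2}$ admits exactly two homogeneous positive kernel elements vanishing on $\partial\rnp$, namely $x\mapsto x_1|x|^{-\ap}$ and $x\mapsto x_1|x|^{-\am}$. The strategy is to extract from $u$ a ``singular'' component proportional to a perturbed version of the first, then apply the preceding optimal regularity / Hopf theorem to the variational remainder.

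\textbf{Local model solutions and upper barrier.}
Next, on a small punctured half-ball, I would construct positive local solutions $\Phi_\pm\in C^2$ of the full perturbed PDE, vanishing on the flat boundary, with $\Phi_\pm(x)\sim x_1|x|^{-\alpha_\pm(\gamma)}$ as $x\to 0$. A Frobenius-type ansatz $\Phi_\pm=x_1|x|^{-\alpha_\pm(\gamma)}+R_\pm$ reduces the problem to solving a linear equation for $R_\pm$ with source $O(|x|^{\tau-\alpha_\pm(\gamma)-1})$; this is carried out by a fixed-point argument in a weighted Banach space adapted to the indicial gap $\ap-\am=2\sqrt{n^2/4-\gamma}>0$. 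A maximum principle comparison on the punctured half-ball, using coercivity from $\gamma<n^2/4$ together with a standard removable-singularity argument, then yields $u\leq C\Phi_+$ for some $C>0$ near $0$.

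\textbf{Decomposition and application of Hopf.}
Now define the sharp threshold
\begin{equation*}
c^*:=\inf\{c\geq 0:\ u\leq c\Phi_+\text{ on some punctured neighborhood of }0\}\in [0,C],
\end{equation*}
and set $w:=u-c^*\Phi_+$. By the strong maximum principle and the minimality of $c^*$, $w$ is a nonnegative solution of the same PDE with $w(x)=o(x_1|x|^{-\ap})$ at $0$. The crucial step is then to promote this smallness into the variational membership $w\in D^{1,2}_{loc,0}$, after which the Hopf theorem applies. Two cases arise: if $w\equiv 0$ then $u=c^*\Phi_+\sim c^*\, x_1|x|^{-\ap}$ (forcing $c^*>0$ by positivity of $u$), giving the second alternative; otherwise $w>0$ and $w(x)\sim K\, x_1|x|^{-\am}$ with $K>0$, and either $c^*=0$ giving the first alternative $u\sim K\, x_1|x|^{-\am}$, or $c^*>0$ in which case the $\ap$-term in $u=c^*\Phi_++w$ dominates since $|x|^{-\ap}\gg |x|^{-\am}$ near $0$, yielding again $u\sim c^*\, x_1|x|^{-\ap}$.

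\textbf{Main obstacle.}
The hard part is the passage from the pointwise bound $w(x)=o(x_1|x|^{-\ap})$ to square-integrability $w\in D^{1,2}_{loc,0}$: pointwise smallness alone does not suffice, and one must rule out intermediate growth rates $x_1|x|^{-\alpha}$ with $\am<\alpha<\ap$. This is handled by a spectral-gap fact on the Dirichlet angular operator on the hemisphere: the principal eigenfunction yields exactly the exponents $\ap,\am$, while the next angular mode produces strictly larger exponents. A weighted energy estimate on $w$, combined with the minimality of $c^*$ (which precludes any residual $\ap$-component), then forces $w$ to decay like $x_1|x|^{-\alpha}$ for some $\alpha<n/2$, placing $w$ in $D^{1,2}_{loc,0}$. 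Equally delicate is verifying that $w$ remains a genuine weak solution without a spurious point mass at $0$; this requires a removable-singularity lemma for $L_\gamma$ in the coercive range $\gamma<n^2/4$.
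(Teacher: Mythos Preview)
The paper does not supply a proof of this classification theorem here; it only cites \cite{gr4}. So the comparison is against the logical structure of your argument rather than a written proof in the text.

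There is a genuine gap in your decomposition step. With $c^*:=\inf\{c\ge 0:\ u\le c\,\Phi_+\ \text{on some punctured neighborhood of }0\}$, what you actually obtain is $\limsup_{x\to 0} w(x)/\Phi_+(x)=0$ for $w:=u-c^*\Phi_+$; you do \emph{not} get $w\ge 0$, nor the two-sided estimate $w(x)=o(x_1|x|^{-\ap})$. The latter would require $\liminf_{x\to 0}u/\Phi_+=c^*$ as well, i.e.\ that the limit of $u/\Phi_+$ exists, which is precisely the heart of the dichotomy you are trying to prove. The strong maximum principle cannot close this on its own: the domain is punctured at $0$, the coefficient $(\gamma+O(|x|^\tau))/|x|^2$ is singular, and even after dividing by $\Phi_+$ to remove the zeroth-order term, strict negativity of $w/\Phi_+$ in the interior does not yield a uniform bound $w/\Phi_+\le -\varepsilon$ as $x\to 0$, so you cannot contradict the minimality of $c^*$. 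In short, the ``minimality $\Rightarrow$ $w\ge 0$ and $w=o(\Phi_+)$'' step is circular as written.

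The machinery you invoke only at the end (the Emden--Fowler substitution $t=-\log|x|$ and the Dirichlet spectral gap on the hemisphere) is in fact needed earlier, to establish that $\lim_{x\to 0}u(x)/\Phi_+(x)$ exists. One projects onto the first angular mode $\theta_1$ and analyzes the resulting second-order ODE in $t$, whose characteristic exponents are $\pm(\ap-\ams)/2$; monotonicity or energy arguments for this ODE rule out oscillation of the leading coefficient and force $u/\Phi_+\to c^*\in[0,\infty)$. Only once that limit is secured can you subtract $c^*\Phi_+$, verify the remainder is $o(\Phi_+)$, upgrade it to $D^{1,2}_{loc,0}$, and invoke the Hopf-type theorem. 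Your outline has the right ingredients but applies them in the wrong order: the angular spectral analysis must precede, not follow, the subtraction of the singular profile.
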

This result then allows us to completely classify all positive solutions to $L_\gamma u=0$ on $\rnp$. One can therefore deduce the following.
\begin{proposition} Assume $\gamma < \frac{n^2}{4}$ and let $u\in C^2(\overline{\rnp}\setminus \{0\})$ be such that
\begin{equation}
\left\{\begin{array}{ll}
-\Delta u-\frac{\gamma}{|x|^2}u=0 &\hbox{\rm in } \rnp\\
u>0&\hbox{\rm in }  \rnp\\
u=0&\hbox{\rm on } \partial \rnp.\end{array}\right.
\end{equation}
Then, there exist $\lambda-, \lambda_+\geq 0$ such that
\begin{equation}
u(x)=\lambda_- x_1|x|^{-\am}+\lambda_+ x_1|x|^{-\ap}\hbox{ for all }x\in \rnp.
\end{equation}
\end{proposition}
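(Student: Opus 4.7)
The strategy is to combine a separation of variables on the half-sphere with the Classification of Singular Solutions just stated, applied both at the origin and, via a Kelvin inversion, at infinity. Write $x=r\theta$ with $r=|x|$ and $\theta\in\mathbb{S}^{n-1}_+$, fix an $L^2$-orthonormal basis $(\phi_k)_{k\geq 1}$ of Dirichlet eigenfunctions of $-\Delta_{\mathbb{S}^{n-1}}$ on $\mathbb{S}^{n-1}_+$ with eigenvalues $0<\mu_1<\mu_2\leq\cdots$, and recall that $\mu_1=n-1$ and $\phi_1(\theta)=c_n\theta_1$ for some $c_n>0$. Expanding $u(r\theta)=\sum_{k\geq 1}g_k(r)\phi_k(\theta)$, the equation $L_\gamma u=0$ gives for each $k$ the Euler ODE
\[
-g_k''(r)-\frac{n-1}{r}g_k'(r)+\frac{\mu_k-\gamma}{r^2}g_k(r)=0,
\]
whose characteristic exponents
\[
a_k^{\pm}=\frac{2-n}{2}\pm\sqrt{\frac{(n-2)^2}{4}+\mu_k-\gamma}
\]
are real and distinct since $\gamma<n^2/4$. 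Hence $g_k(r)=A_k r^{a_k^+}+B_k r^{a_k^-}$, and a direct check gives $a_1^+=1-\am$, $a_1^-=1-\ap$, so the $k=1$ contribution is exactly $c_n^{-1}(A_1 x_1|x|^{-\am}+B_1 x_1|x|^{-\ap})$. The claim thus reduces to showing $A_k=B_k=0$ for every $k\geq 2$.

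To kill the $B_k$'s, apply the Classification to $u$ near $0$: since $u>0$ is a $C^2$ solution of $L_\gamma u=0$ in a punctured neighborhood of $0$ in $\rnp$ vanishing on the boundary, there exist $K>0$ and $\alpha_0\in\{\am,\ap\}$ such that $u(x)\sim K x_1|x|^{-\alpha_0}$ as $x\to 0$. Projecting against $\phi_k$ for $k\geq 2$ and using $\int_{\mathbb{S}^{n-1}_+}\theta_1\phi_k\,d\sigma=0$ gives $g_k(r)=o(r^{1-\alpha_0})$ as $r\to 0$. Since $k\mapsto\mu_k$ is strictly increasing, so is $k\mapsto-a_k^-$, and thus $a_k^-<a_1^-\leq 1-\alpha_0$ for every $k\geq 2$. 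The branch $B_k r^{a_k^-}$ would therefore be strictly more singular than $r^{1-\alpha_0}$ at the origin, and the $o$-asymptotic forces $B_k=0$.

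To kill the $A_k$'s, consider the Kelvin transform $\tilde u(x):=|x|^{2-n}u(x/|x|^2)$. By the conformal identity $\Delta(|x|^{2-n}u(x/|x|^2))=|x|^{-n-2}(\Delta u)(x/|x|^2)$ and the invariance of $|x|^{-2}$ under the inversion, $\tilde u$ is again a positive $C^2$ solution of $L_\gamma\tilde u=0$ on $\rnp$ vanishing on $\partial\rnp\setminus\{0\}$. The Classification applied to $\tilde u$ at $0$ then yields, after unwinding the inversion and using the identity $n-\alpha_\pm(\gamma)=\alpha_\mp(\gamma)$, an asymptotic $u(y)\sim K' y_1|y|^{-\alpha_\infty}$ as $|y|\to+\infty$ with $\alpha_\infty\in\{\am,\ap\}$. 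The same orthogonal projection onto $\phi_k$ for $k\geq 2$, now taking $r\to+\infty$ and using the strict monotonicity $a_k^+>a_1^+$, forces $A_k=0$.

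With only the $k=1$ mode surviving, one obtains $u(x)=\lambda_- x_1|x|^{-\am}+\lambda_+ x_1|x|^{-\ap}$ for some $\lambda_\pm\in\rr$. Positivity of $u$ and of $x_1$ on $\rnp$ give $\lambda_+ r^{-\ap}+\lambda_- r^{-\am}\geq 0$ for all $r>0$; the $\ap$-term dominates as $r\to 0^+$, forcing $\lambda_+\geq 0$, and the $\am$-term dominates as $r\to+\infty$, forcing $\lambda_-\geq 0$. The delicate step, and the main technical point where I would be most careful, is the orthogonal projection used to eliminate the higher modes: it is crucial that the Classification furnishes a genuine $\sim$-asymptotic (not merely an upper bound), so that after subtracting the $\phi_1$-component the residual is truly $o$-small and the strict ordering of the exponents $a_k^\pm$ can be leveraged to kill the corresponding Fourier coefficients.
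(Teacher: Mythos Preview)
Your argument is correct and is precisely the natural way to make rigorous the deduction the paper only hints at (``One can therefore deduce the following''), by combining the local Classification theorem at $0$ with its Kelvin-transformed version at infinity and a spherical-harmonic expansion on the half-sphere. One cosmetic point: the eigenvalues $\mu_k$ need not be \emph{strictly} increasing in $k$ (multiplicities can occur), but all you actually use is $\mu_k>\mu_1$ for $k\geq 2$, which follows from the simplicity of the first Dirichlet eigenvalue on $\mathbb{S}^{n-1}_+$; this already gives the strict inequalities $a_k^-<a_1^-$ and $a_k^+>a_1^+$ that you need.
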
\vskip 10pt

\section{The profile of the extremals for $\mu_{\gamma, s}(\rnp)$} 

The following is a useful description of the solution profile for the extremals on $\rnp$. We shall give below a proof of the symmetry.  

\begin{theorem}\label{th:sym}
Let $n\geq 3$, $s\in [0,2)$, $\gamma<\frac{n^2}{4}$. We consider $u\in D^{1,2}(\rnp)\setminus \{0\}$ such that $u\geq 0$ and 
\begin{equation}\label{sys:sym}
-\Delta u-\frac{\gamma}{|x|^2}u=\frac{u^{\crits-1}}{|x|^s} \hbox{ weakly in }\rnp.
\end{equation}
Then, the following hold:
\begin{enumerate}
\item  $u\circ\sigma=u$ for all isometry of $\rn$ such that $\sigma(\rnp)=\rnp$. In particular, there 
exists $v\in C^2(\rr_+\times \rr)$ such that for all $x_1>0$ and all $x'\in\rr^{n-1}$, 
$$u(x_1,x')=v(x_1,|x'|).$$
\item If $u\not\equiv 0$, then there exist $K_1,K_2>0$ such that
\begin{equation}\nonumber
u(x)\sim_{x\to 0}K_1\frac{x_1}{|x|^{\am}}\quad \hbox{ and } \quad u(x)\sim_{|x|\to +\infty}K_2\frac{x_1}{|x|^{\ap}}.
\end{equation}
\end{enumerate}
\end{theorem}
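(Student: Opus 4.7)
My plan is to address Part~1 (symmetry) by a moving-plane argument, and Part~2 by applying Theorem~\ref{Hopf} directly at $0$, and then via a Kelvin transform at infinity.

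\textbf{Symmetry.} The isometries $\sigma$ of $\rn$ preserving $\rnp$ under which \eqref{sys:sym} is invariant must fix the origin (the Hardy term is not translation-invariant); they form the group $O(n-1)$ acting on $x'$. Since $O(n-1)$ is generated by hyperplane reflections, it suffices to show that $u$ is invariant under every reflection $\sigma_e$ about a hyperplane $\{x\cdot e=0\}$ with unit $e\in \{e_1\}^\perp$. Fix such $e$, set $T_\lambda=\{x\cdot e=\lambda\}$, $\Sigma_\lambda=\{x\in\rnp:x\cdot e>\lambda\}$ for $\lambda\geq 0$, and let $\sigma_\lambda$ be the reflection about $T_\lambda$. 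The key monotonicity, $|\sigma_\lambda(x)|^2=|x|^2-4\lambda(x\cdot e-\lambda)\leq |x|^2$ for $x\in\Sigma_\lambda$ and $\lambda\geq 0$, gives $|\sigma_\lambda x|^{-2}\geq |x|^{-2}$ and $|\sigma_\lambda x|^{-s}\geq |x|^{-s}$ on $\Sigma_\lambda$. Setting $u_\lambda=u\circ\sigma_\lambda$ and $w_\lambda=u-u_\lambda$, a direct computation shows
\[
-\Delta w_\lambda - c_\lambda(x)\, w_\lambda = h_\lambda(x) \quad \text{on } \Sigma_\lambda,
\]
with $c_\lambda=\gamma/|x|^2+p\,\xi_\lambda^{p-1}/|x|^s$ (where $p=\crits-1$ and $\xi_\lambda$ lies between $u$ and $u_\lambda$), and $h_\lambda=\gamma u_\lambda(|x|^{-2}-|\sigma_\lambda x|^{-2})+u_\lambda^p(|x|^{-s}-|\sigma_\lambda x|^{-s})$ of a definite sign. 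Moreover $w_\lambda$ vanishes on $T_\lambda\cup(\partial\rnp\cap\Sigma_\lambda)$ and decays at infinity. For $\lambda$ large, the $L^{\crits}(|x|^{-s})$-tail of $u$ on $\Sigma_\lambda$ is small (by decay of $u$), and a Hardy-Sobolev-type integral maximum principle applied to $w_\lambda^-$ forces $w_\lambda\geq 0$. Decreasing $\lambda$ and setting $\lambda_0:=\inf\{\lambda\geq 0 : w_\mu\geq 0 \text{ on } \Sigma_\mu,\, \forall \mu\geq \lambda\}$, the strong maximum principle combined with a Hopf-type perturbation gives $\lambda_0=0$. The mirror argument with $-e$ then yields $u=u\circ\sigma_e$, i.e., invariance under $\sigma_e$.

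\textbf{Asymptotic profile near $0$.} I apply Theorem~\ref{Hopf} directly to $u$. The nonlinearity $f(x,v)=v^{\crits-1}/|x|^s$ trivially satisfies $|f(x,v)|\leq |v|(1+|v|^{\crits-2}/|x|^s)$, and $u\in D^{1,2}(\rnp)\subset D^{1,2}_{loc,0}$. The theorem produces $K_1\in\rr$ with $u(x)\sim K_1\, d(x,\partial\rnp)/|x|^{\am}=K_1\, x_1/|x|^{\am}$ as $x\to 0$; positivity $K_1>0$ follows from $u\geq 0$, $u\not\equiv 0$.

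\textbf{Asymptotic profile at infinity.} I use the Kelvin transform $v(y):=|y|^{2-n}u(y/|y|^2)$, which preserves $D^{1,2}(\rnp)$ since $y\mapsto y/|y|^2$ is an involution of $\rnp$, and is nonnegative. The conformal identity $\Delta v(y)=|y|^{-n-2}(\Delta u)(y/|y|^2)$, together with $|y/|y|^2|=|y|^{-1}$ and $u(y/|y|^2)=|y|^{n-2}v(y)$, shows that the Hardy and Hardy-Sobolev terms rescale critically, so $v$ satisfies the \emph{same} equation $-\Delta v-\gamma v/|y|^2=v^{\crits-1}/|y|^s$ on $\rnp$. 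Applying Theorem~\ref{Hopf} to $v$ at $0$ gives $v(y)\sim K\,y_1/|y|^{\am}$ as $y\to 0$, with $K>0$. Translating back via $x=y/|y|^2$, $|y|=1/|x|$, $y_1=x_1/|x|^2$, and $\am+\ap=n$, yields $u(x)=|y|^{n-2}v(y)\sim K_2\, x_1/|x|^{\ap}$ as $|x|\to\infty$ with $K_2=K>0$.

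\textbf{Main obstacle.} The hardest step is the moving-planes argument, because the Hardy potential $\gamma/|x|^2$ is unbounded near $0$, so the coefficient $c_\lambda$ in the equation for $w_\lambda$ does not belong to $L^\infty(\Sigma_\lambda)$, and classical narrow-domain maximum principles fail verbatim. One must instead use integral Hardy-Sobolev type maximum principles (in the spirit of Proposition~\ref{lem:ineq:eps}) to absorb the singular coefficient, exploiting that the nonlinear part of $c_\lambda$ has small $L^{\crits/(p-1)}(|x|^{-s})$-norm on $\Sigma_\lambda$ for $\lambda$ large. The latter relies in turn on the decay of $u$ at infinity, which is obtained from a Moser-type iteration starting from $u\in L^{\crits}(\rnp, |x|^{-s}dx)$.
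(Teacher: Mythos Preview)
Your overall strategy is sound, and for Part~2 it coincides with the paper's: the behaviour at $0$ is a direct application of Theorem~\ref{Hopf}, and the behaviour at infinity follows from the same theorem applied to the Kelvin transform $v(y)=|y|^{2-n}u(y/|y|^2)$, since the inversion $y\mapsto y/|y|^2$ preserves $\rnp$ and the equation is invariant under it.

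For Part~1, however, your route is genuinely different from the paper's. The paper does \emph{not} run the moving-plane method directly on the unbounded half-space. Instead it first performs the conformal change
\[
v(x):=|x|^{2-n}\,u\!\left(-\vec{e}_1+\frac{x}{|x|^2}\right),\qquad x\in D:=B_{1/2}\Bigl(\tfrac{1}{2}\vec{e}_1\Bigr),
\]
which sends $\rnp$ to the bounded ball $D$ and transforms \eqref{sys:sym} into an equation on $D$ with two boundary singularities at $0$ and $\vec{e}_1$. The moving planes are then slid on this compact domain, using only the classical strong maximum principle and Hopf lemma (the required boundary behaviour of $v$ at $0$ and $\vec{e}_1$ being furnished by Theorem~\ref{Hopf}). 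This compactification completely sidesteps the two technical issues you flagged---the decay of $u$ at infinity needed to start the process, and the integral maximum principle needed to handle the singular coefficient on an unbounded domain. Your direct approach on $\rnp$ is in the spirit of Chern--Lin \cite{CL5} and can be made to work, but it is heavier machinery.

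One point to watch in your sketch: you write that $h_\lambda$ has ``a definite sign'', but this depends on the sign of $\gamma$. On $\Sigma_\lambda$ one has $|\sigma_\lambda x|<|x|$, so $|x|^{-2}-|\sigma_\lambda x|^{-2}<0$, and the Hardy contribution $\gamma u_\lambda(|x|^{-2}-|\sigma_\lambda x|^{-2})$ is $\leq 0$ only when $\gamma\geq 0$. The paper makes this explicit: the displayed inequality $-\Delta w\geq 0$ in its moving-plane step carries a footnote ``this is where $\gamma\geq 0$ is used'', and for $\gamma<0$ it defers to a separate argument of Chern--Lin. Your write-up should make the same distinction, as the sign of the forcing term---and hence the direction in which the planes are slid---changes with the sign of $\gamma$.
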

The above theorem yields in particular, the existence of a solution $U$ for (\ref{sys:sym}) which satisfies for some $C>0$, the estimates
\begin{equation}\label{bnd:U.0}
U(x)\leq C x_1 |x|^{-\ap} \quad \hbox{ and \quad $|\nabla U(x)|\leq C |x|^{-\ap}$ for all $x\in\rnp$. }
\end{equation}
Noting that 
\begin{eqnarray*}
\gamma < \frac{n^2-1}{4} & \Leftrightarrow & \ap-\am > 1, 
\end{eqnarray*}
it follows from   \eqref{bnd:U.0}, that whenever $\gamma<\frac{n^2-1}{4}$, then  $|x'|^2|\partial_1 U|^2=O(|x'|^{2-2\ap})$ as $|x'|\to +\infty$ on $\partial\rnp=\rr^{n-1}$, from which we could deduce that $x'\mapsto |x'|^2|\partial_1 U(x')|^2$ is in $L^1(\partial\rnp)$. This estimate --which does not hold when $\gamma>\frac{n^2-1}{4}$ --  is key for the construction of test functions for $\mu_{\gamma, s}(\Omega)$ based on the solution  $U$ of (\ref{sys:sym}), in the case when $\gamma\leq \frac{n^2-1}{4}$.  

\medskip The proof of symmetry goes as follows. It was established by Chern-Lin \cite{CL5}) for $\gamma <0$ and by Ghoussoub-Robert \cite{gr1} in the case when $\gamma = 0$, a proof which extends immediately to the case $\gamma \geq 0$. Here is a sketch.

Denoting by $\vec{e}_1$ the first vector of the canonical 
basis of $\rn$,  we consider the open ball $D:=B_{1/2}\left(\frac{1}{2}\vec{e}_1\right)$
and define
\begin{equation*}
v(x):=|x|^{2-n}u\left(-\vec{e}_1+\frac{x}{|x|^2}\right)
\end{equation*}
for all $x\in D$. As one checks, $v\in D^{1,2}(D)$ and
\begin{equation}\label{eq:v}
-\Delta v-\gamma \frac{v}{|x|^2\left|x-\vec{e}_1\right|^2}=\frac{v^{\crits-1}}{|x|^s\left|x-\vec{e}_1\right|^s}\hbox{ weakly in }D.
\end{equation}
It then follows from standard regularity theory and Theorem \ref{Hopf} that $v\in C^2(\overline{D}\setminus\{0,\vec{e}_1\})$ and that there exists $K_1,K_2>0$ such that
\begin{equation*}
v(x)\sim_{x\to 0}K_1 \frac{d(x,\partial D)}{|x|^{\am}}\hbox{ and } v(x)\sim_{x\to \vec{e}_1}K_2 \frac{d(x,\partial D)}{|x-\vec{e}_1|^{\am}}.
\end{equation*} 
We now use the moving plane method to prove the symmetry property of  $v$, which 
is defined on a ball. 
For 
$\mu\geq 0$ and $x=(x',x_n)\in \rn$, where $x'\in \rr^{n-1}$ and 
$x_n\in\rr$, we let
$$x_\mu=(x',2\mu-x_n)\hbox{ and }D_\mu=\{x\in D/\, x_\mu\in D\}.$$
It follows from Hopf's Lemma that there exists $\epsilon_0>0$ such that for any $\mu\in 
(1-\epsilon_0,1)$, we have that $D_\mu\neq\emptyset$ 
and $v(x)\geq v(x_\mu)$ for all $x\in D_\mu$ such that $x_n\leq\mu$. We 
let $\mu\geq 0$. We say that $(P_\mu)$ holds if:
\[
\hbox{ $D_{\mu}\neq \emptyset$ and
$v(x)\geq v(x_{\mu})$
for all $x\in D_{\mu}$ such that $x_n\leq\mu$.}
\]
We let
\begin{equation}\label{def:lambda}
\lambda:=\min\left\{\mu\geq 0;\,  (P_{\nu})\hbox{ holds for all }\nu\in 
\left(\mu,1\right) \right\}.
\end{equation}
We claim that $\lambda=0$. Indeed, otherwise we have $\lambda>0$, $D_{\lambda}\neq\emptyset$ and that $(P_{\lambda})$ holds. 
We let
$$w(x):=v(x)-v(x_{\lambda})$$
for all $x\in D_{\lambda}\cap\{x_n<\lambda\}$. Since $(P_{\lambda})$ 
holds, we have that $w(x)\geq 0$ for all $x\in 
D_{\lambda}\cap\{x_n<\lambda\}$. With the equation (\ref{eq:v}) of $v$ and 
$(P_{\lambda})$, we get that\footnote{this is where $\gamma\geq 0$ is used}
\begin{eqnarray*}
-\Delta w&=& \frac{v(x)^{\crits-1}}{|x+|x|^2\vec{e}_1|^s}- 
\frac{v(x_{\lambda})^{\crits-1}}{|x_{\lambda}+|x_{\lambda}|^2\vec{e}_1|^s}+\gamma\left(\frac{v(x)}{|x+|x|^2\vec{e}_1|^2}- 
\frac{v(x_{\lambda})}{|x_{\lambda}+|x_{\lambda}|^2\vec{e}_1|^2}\right)\\
&\geq &v(x_{\lambda})^{\crits-1}\left(\frac{1}{|x+|x|^2\vec{e}_1|^s}- 
\frac{1}{|x_{\lambda}+|x_{\lambda}|^2\vec{e}_1|^s}\right)\\
&&+\gamma v(x_\lambda) \left(\frac{1}{|x+|x|^2\vec{e}_1|^2}- 
\frac{1}{|x_{\lambda}+|x_{\lambda}|^2\vec{e}_1|^2}\right)
\end{eqnarray*}
for all $x\in D_{\lambda}\cap\{x_n<\lambda\}$. With straightforward 
computations, we have that
\begin{eqnarray*}
&& |x_{\lambda}|^2-|x|^2=4\lambda(\lambda-x_n)\\
&& 
|x_{\lambda}-|x_{\lambda}|^2\vec{e}_1|^2-|x-|x|^2\vec{e}_1|^2=(|x_{\lambda}|^2-|x|^2)\left(1+|x_{\lambda}|^2+|x|^2-2x_1)\right)
\end{eqnarray*}
for all $x\in\rn$. It follows that $-\Delta w(x)>0$ for all $x\in 
D_{\lambda}\cap\{x_n<\lambda\}$. Note that we have used that $\lambda>0$. 
It then follows from Hopf's Lemma and the strong comparison principle that
\begin{equation}\label{ppty:w}
w>0\hbox{ in }D_\lambda\cap\{x_n<\lambda\}\hbox{ and }\frac{\partial 
w}{\partial \nu}<0\hbox{ on }D_\lambda\cap\{x_n=\lambda\}.
\end{equation}
By definition, there exists a sequence 
$(\lambda_i)_{i\in\mathbb{N}}\in\rr$ and a sequence 
$(x^i)_{i\in\mathbb{N}}\in D$ such that $\lambda_i<\lambda$, $x^i\in 
D_{\lambda_i}$, $(x^i)_n<\lambda_i$, $\lim_{i\to 
+\infty}\lambda_i=\lambda$ and
\begin{equation}\label{ineq:sym:2}
v(x^i)<v((x^i)_{\lambda_i})
\end{equation}
for all $i\in\mathbb{N}$. Up to extraction a subsequence, we assume that 
there exists $x\in \overline{D}_{\lambda}\cap\{x_n\leq \lambda\}$ such 
that $\lim_{i\to +\infty}x^i=x$ with $x_n\leq \lambda$. Passing to the 
limit $i\to +\infty$ in (\ref{ineq:sym:2}), we get that $v(x)\leq 
v(x_{\lambda})$. It follows from this last inequality and (\ref{ppty:w}) 
that $v(x)-v(x_\lambda)=w(x)=0$, and then $x\in 
\partial(D_{\lambda}\cap\{x_n<\lambda\})$.

\smallskip\noindent{\it Case 1:} If $x\in\partial D$. Then 
$v(x_{\lambda})=0$ and $x_{\lambda}\in\partial D$. Since $D$ is a ball and 
$\lambda>0$, we get that $x=x_{\lambda}\in\partial D$. Since $v$ is $C^1$, 
we get that there exists $\tau_i\in ((x^i)_n,2\lambda_i-(x^i)_n)$ such 
that
$$v(x^i)-v((x^i)_{\lambda_i})=\partial_n v((x')^i,\tau_i)\times 
2((x^i)_n-\lambda_i)$$
Letting $i\to +\infty$, using that $(x^i)_n<\lambda_i$ and 
(\ref{ineq:sym:2}), we get that $\partial_n v(x)\geq 0$. On the other 
hand, we have that
$$\partial_n v(x)=\frac{\partial v}{\partial\nu}(x)\cdot 
(\nu(x)|\vec{e}_n)=\frac{\lambda}{|x-\vec{e}_1/2|}\frac{\partial 
v}{\partial\nu}(x).$$
Therefore $\frac{\partial 
v}{\partial\nu}(x)\leq 0$: this is a contradiction with Hopf's Lemma.

\smallskip\noindent{\it Case 2:} If $x\in D$. Since $v(x_{\lambda})=v(x)$, 
we then get that $x_{\lambda}\in D$. Since $x\in 
\partial(D_{\lambda}\cap\{x_n<\lambda\})$, we then get that $x\in 
D\cap\{x_n=\lambda\}$. With the same argument as in the preceding step, we 
get that $\partial_n v(x)\geq 0$. On the other hand, with (\ref{ppty:w}), we get that $2\partial_n v(x)=\partial_n w(x)<0$. A contradiction.

This proves that $\lambda=0$ in either one of the two cases considered above.  It now follows from the definition (\ref{def:lambda}) of 
$\lambda$ that $v(x',x_n)\geq v(x',-x_n)$ for all $x\in D$ such that 
$x_n\leq 0$. With the same technique, we get the reverse inequality, and 
then, we get that $v(x',x_n)=v(x',-x_n)$ for all $x=(x',x_n)\in D$. In other words, $v$ is symmetric with respect 
to the hyperplane $\{x_n=0\}$. The same analysis holds for any 
hyperplane containing $\vec{e_1}$. Coming back to the initial function 
$u$, this complete the proof of the symmetry of $u$.

\section{Extremals when either $s>0$ or $\{s=0$, $\gamma>0$ and $n\geq 4$\}} 
 
 Recall that if $0\in \partial \Omega$, then $\frac{(n-2)^2}{4} <\gamma_H(\Omega) \leq \frac{n^2}{4}$. If now $\gamma_H(\Omega)\leq \gamma < \frac{n^2}{4}$, then $\mu_{\gamma, s}(\Omega) \leq 0 <\mu_{\gamma, s}(\rnp)$ and it is therefore attained. 
 In this section, we deal with the more interesting cases when $\gamma <\gamma_H(\Omega)\leq \frac{n^2}{4}$. In the sequel, $H_\Omega(0)$ will denote the mean curvature of $\partial \Omega$ at $0$. The orientation is chosen such that the mean curvature of the canonical sphere (as the boundary of the ball) is positive. 
 
  We now outline the proof of the following existence result.

\begin{theorem} \label{main} Let $\Omega$ be a smooth bounded domain in $\rn$ ($n\geq 3$) with $0\in \partial \Omega$ so that $\frac{(n-2)^2}{4} <\gamma_H(\Omega) \leq \frac{n^2}{4}$. Let $0\leq s < 2$ and $\gamma <\gamma_H(\Omega)$. \\
Assume that either  $s>0$ or \{$s=0$, $n\geq 4$ and $\gamma>0$\}.
\begin{enumerate}
\item If $0<\gamma\leq \frac{n^2-1}{4}$, and the mean curvature of $\partial \Omega$ at $0$ is negative, then there are extremals for $\mu_{\gamma, s}(\Omega)$. 

\item If $\frac{n^2-1}{4}<\gamma <\frac{n^2}{4}$, and the Hardy-singular boundary mass $b_\gamma(\Omega)$ is positive, then there are extremals for $\mu_{\gamma, s}(\Omega)$.

 \end{enumerate}

  \end{theorem}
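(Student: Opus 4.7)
By Theorem~\ref{tool}, and since $\gamma<\gamma_H(\Omega)$ guarantees $\mu_{\gamma,s}(\Omega)>0$, it is enough to prove the strict energy inequality
\begin{equation*}
\mu_{\gamma,s}(\Omega)<\mu_{\gamma,s}(\rnp).
\end{equation*}
Under the standing assumptions (either $s>0$, or $\{s=0,\ \gamma>0,\ n\geq 4\}$), Corollary~\ref{coro:cone:1} furnishes an extremal $U\in D^{1,2}(\rnp)$ for $\mu_{\gamma,s}(\rnp)$; Theorem~\ref{th:sym} then gives the axial symmetry $U(x_1,x')=v(x_1,|x'|)$ and the two-sided profile $U(x)\sim K_1\,x_1|x|^{-\am}$ at $0$ and $U(x)\sim K_2\,x_1|x|^{-\ap}$ at infinity, with $|\nabla U(x)|=O(|x|^{-\ap})$ at infinity. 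I will test $J^\Omega_{\gamma,s}$ on truncated rescalings of $U$, suitably corrected in the high-$\gamma$ regime by the Hardy-singular function $H$ produced in Theorem~\ref{def:mass:intro}.

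\textbf{Step 1: Case $0<\gamma\leq\tfrac{n^2-1}{4}$, i.e.\ $\ap-\am\geq 1$.} First, straighten $\partial\Omega$ near $0$ by a diffeomorphism $\varphi$ sending a half-ball of $\rnp$ onto a neighbourhood $V$ of $0$ in $\Omega$, with $\varphi(0)=0$ and $d\varphi_0=\mathrm{Id}$; the second-order expansion of the pulled-back Euclidean metric at $0$ generates exactly the second fundamental form $II_0$ of $\partial\Omega$. Fix a cut-off $\eta\in C^\infty_c(V)$ with $\eta\equiv 1$ near $0$ and set
\begin{equation*}
\ue(x):=\eta(x)\,\eps^{-(n-2)/2}\,U\!\left(\varphi^{-1}(x)/\eps\right).
\end{equation*}
The strict bound $\gamma<\tfrac{n^2-1}{4}$ is equivalent to $\ap>(n+1)/2$, which makes $x'\mapsto |x'|^2|\partial_1 U(0,x')|^2$ integrable on $\partial\rnp$. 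Carrying out the expansion (gradient, Hardy, and Hardy--Sobolev terms) and using the axial symmetry to replace the contraction of $II_0$ with $|\nabla U|^2$ on $\partial\rnp$ by the mean curvature, one obtains, with an explicit $c_n>0$,
\begin{equation*}
J^\Omega_{\gamma,s}(\ue)=\mu_{\gamma,s}(\rnp)+c_n\,H_\Omega(0)\,\eps+o(\eps)\qquad(\gamma<\tfrac{n^2-1}{4}),
\end{equation*}
with the limiting case $\gamma=\tfrac{n^2-1}{4}$ producing a logarithmic factor $\eps\log\eps^{-1}$ of the same sign. Since $H_\Omega(0)<0$ by hypothesis, this yields the desired strict inequality for small $\eps$.

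\textbf{Step 2: Case $\tfrac{n^2-1}{4}<\gamma<\tfrac{n^2}{4}$, i.e.\ $\ap-\am<1$.} Now $\ap\leq(n+1)/2$ and the previous boundary integral diverges, so local test functions can no longer close the estimate. I invoke the singular Hardy function $H\in C^2(\overline\Omega\setminus\{0\})$ of Theorem~\ref{def:mass:intro}, normalised so that
\begin{equation*}
H(x)=\frac{\eta(x)\,d(x,\partial\Omega)}{|x|^{\ap}}+\beta(x),\qquad \beta(x)=\frac{b_\gamma(\Omega)\,d(x,\partial\Omega)}{|x|^{\am}}+o\!\left(\frac{d(x,\partial\Omega)}{|x|^{\am}}\right),
\end{equation*}
and define the globally corrected test functions
\begin{equation*}
\ve(x):=\ue(x)+\eps^{(\ap-\am)/2}\,\beta(x).
\end{equation*}
The exponent $(\ap-\am)/2$ is chosen exactly so that the interaction $\int(\nabla\ue\cdot\nabla\beta-\gamma\,\ue\beta/|x|^2)$ contributes at the same order as the remaining local terms. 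Integration by parts on $\partial B_R(0)\cap\Omega$ combined with $L_\gamma H=0$ and the matched asymptotics of $U$ at infinity and of $\beta$ at $0$ produces, with an explicit $c'_n>0$,
\begin{equation*}
J^\Omega_{\gamma,s}(\ve)=\mu_{\gamma,s}(\rnp)-c'_n\,b_\gamma(\Omega)\,\eps^{\ap-\am}+o\!\left(\eps^{\ap-\am}\right),
\end{equation*}
and $b_\gamma(\Omega)>0$ delivers the strict inequality. Theorem~\ref{tool} then produces an extremal for $\mu_{\gamma,s}(\Omega)$.

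\textbf{Main obstacle.} The delicate point in both cases is the interplay between the non-explicit profile of $U$ and the precise order of the next term in the expansion. In (1), the dimensional computation succeeds only because Theorem~\ref{th:sym} symmetrises the contribution of the principal curvatures into the mean curvature, and only because $\ap>(n+1)/2$ keeps the governing boundary integral finite. In (2), one must verify that the cross term between the local bubble and the global correction $\beta$ generates exactly the ratio $c_2/c_1=b_\gamma(\Omega)$, and that all remainders from truncation, from the change of variables, and from the $o$-terms in the asymptotic expansions of $U$ and of $H$ are genuinely of higher order than $\eps^{\ap-\am}$. This balancing of scales is the heart of the argument.
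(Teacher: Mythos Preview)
Your proposal is correct and follows essentially the same approach as the paper's own proof: reduce to the strict inequality via Theorem~\ref{tool}, use the extremal $U$ for $\mu_{\gamma,s}(\rnp)$ together with its symmetry and asymptotic profile from Theorem~\ref{th:sym}, test on truncated rescalings $u_\eps$ through a boundary chart in the regime $\gamma\leq\frac{n^2-1}{4}$ (with the logarithmic correction at the borderline), and add the global correction $\eps^{(\ap-\am)/2}\beta$ coming from the singular solution $H$ of Theorem~\ref{def:mass:intro} when $\gamma>\frac{n^2-1}{4}$. The resulting expansions and the identification of the leading coefficients with $H_\Omega(0)$ (via the symmetry of $U$) and with $b_\gamma(\Omega)$ are exactly those of the paper.
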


\begin{proof}According to Theorem \ref{tool}, in order to establish existence of extremals,  it suffices to show that  $\mu_{\gamma,s}(\Omega)<\mu_{\gamma,s}(\rnp)$. The rest of the section consists of showing that the above mentioned geometric conditions lead to such gap.

Since either $s>0$ or \{$s=0$, $n\geq 4$ and $\gamma>0$\}, we have seen in section 5 that there exists  $U\in D^{1,2}(\rnp)\setminus \{0\}$, $U\geq 0$, that is a minimizer for $\mu_{\gamma,s}(\rnp)$. In other words, 
\begin{equation}\nonumber
J^{\rnp}_{\gamma,s}(U)=\frac{\int_{\rnp}\left(|\nabla U|^2-\frac{\gamma}{|x|^2}U^2\right)\, dx}{\left(\int_{\rnp}\frac{|U|^{\crits}}{|x|^s}\, dx\right)^{\frac{2}{\crits}}}=\mu_{\gamma,s}(\rnp), 
\end{equation}
and there exists $\lambda>0$ such that
\begin{equation}\left\{\begin{array}{lll}\label{eq:U}
-\Delta U-\frac{\gamma}{|x|^2}U&=\lambda\frac{U^{\crits-1}}{|x|^s}&\hbox{ in }\rnp\\
\hfill U&>0 &\hbox{ in }\rnp\\
\hfill U&=0&\hbox{ in }\partial\rnp.
\end{array}\right.\end{equation}
By the results of section 10,  there are $K_1,K_2>0$ such that 
\begin{equation}\label{asymp:U}
U(x)\sim_{x\to 0}K_1\frac{x_1}{|x|^{\ams}}\hbox{ and }U(x)\sim_{|x|\to +\infty}K_2\frac{x_1}{|x|^{\aps}},
\end{equation}
and $U(x_1,x')=\tilde{U}(x_1, |x'|)$ for all $(x_1,x')\in\rnp$ for some function $\tilde U$ on $\rr^+\times \rr$. 

Here and in the sequel, we write for convenience
$$\aps:=\ap\hbox{ and }\ams:=\am.$$
In particular, there exists $C>0$ such that
\begin{equation}\label{bnd:U}
\hbox{$U(x)\leq C x_1 |x|^{-\aps}\hbox{ and }|\nabla U(x)|\leq C |x|^{-\aps}$ for all $x\in\rnp$.}
\end{equation}
One constructs suitable test-functions for each range of $\gamma$. \\

\noindent For $r>0$, we let $\tilde{B}_{r}:=(-r,r)\times B_{r}^{(n-1)}(0)\subset \rr\times\rr^{n-1},$ 
and denote  
$V_+:=V\cap\rnp$
for any given $V\subset\rn$. Since $\Omega$ is smooth, then, up to a rotation, there exists  $\delta>0$ and $\varphi_0: B_\delta^{(n-1)}(0)\to \rr$ such that $\varphi_0(0)=|\nabla\varphi_0(0)|=0$ and 
\begin{equation}\label{def:phi}
\left\{\begin{array}{llll}
\varphi: & \tilde{B}_{3\delta} & \to &\rn\\
& (x_1,x') & \mapsto & (x_1+\varphi_0(x'), x'),
\end{array}\right.
\end{equation}
that is a diffeomorphism onto its image such that
$$\varphi(\tilde{B}_{3\delta}\cap\rnp)=\varphi(\tilde{B}_{3\delta})\cap\Omega\hbox{ and }\varphi(\tilde{B}_{3\delta}\cap\partial\rnp)=\varphi(\tilde{B}_{3\delta})\cap\partial\Omega.$$
Let $\eta\in C^\infty_c(\rn)$ be such that $\eta(x)=1$ for all $x\in \tilde{B}_\delta$, $\eta(x)=0$ for all $x\not\in \tilde{B}_{2\delta}$. For $\epsilon>0$, define 
\begin{equation}
u_\epsilon(x):=\left(\eta\epsilon^{-\frac{n-2}{2}}U(\epsilon^{-1}\cdot)\right)\circ \varphi^{-1}(x)\hbox{ for }x\in\varphi(\tilde{B}_\delta)\cap\Omega\hbox{ and }0\hbox{ elsewhere.}
\end{equation}
Note that  $(\ue)_{\eps>0}\in \dundeux$. One aims for  a Taylor expansion of $J^\Omega_{s,\gamma}(\ue)$ as $\epsilon\to 0$. 

Given $(a_\epsilon)_{\epsilon>0}\in\rr$, $\Theta_\gamma(a_\epsilon)$ will denote a quantity such that, as $\epsilon\to 0$.
$$\Theta_\gamma(a_\epsilon):=\left\{\begin{array}{ll}
o(a_\epsilon) & \hbox{ if }\gamma<\frac{n^2-1}{4}\\
O(a_\epsilon) & \hbox{ if }\gamma=\frac{n^2-1}{4}\\
\end{array}\right. 
$$
Tedious calculations eventually show that as $\eps\to 0$,
\begin{eqnarray}\label{dev:J:2}
\quad \quad J^\Omega_{\gamma, s}(\ue)&=& \mu_{\gamma,s}(\rnp)\left(1+\epsilon \frac{H_\Omega(0)\int_{\partial\rnp\cap \tilde{B}_{\eps^{-1}\delta}} |x'|^2(\partial_1 U)^2\, dx' }{2(n-1)\lambda \int_{\rnp}\frac{|U|^{\crits}}{|x|^s}\, dx} +\Theta_\gamma(\eps)\right)
\end{eqnarray}
\noindent{\bf Claim 1:} If $\gamma<\frac{n^2-1}{4}$, then we have 
\begin{equation}\label{asymp:J:gamma:small}
J(u_\epsilon)=\mu_{\gamma,s}(\rnp)\left(1+c_{\gamma,s}\cdot H_\Omega(0)\cdot \eps +o(\eps)\right)\hbox{ when }\eps\to 0.
\end{equation}
where  $c_{\gamma,s}>0$ is a positive constant.

Indeed, note that $\gamma<\frac{n^2-1}{4}  \Leftrightarrow  \aps-\ams>1$, and the bound \eqref{bnd:U} yields $|x'|^2|\partial_1 U|^2=O(|x'|^{2-2\aps})$ when $|x'|\to +\infty$. Since $\partial\rnp=\rr^{n-1}$, we then get that $x'\mapsto |x'|^2|\partial_1 U(x')|^2$ is in $L^1(\partial\rnp)$, and therefore \eqref{dev:J:2} yields \eqref{asymp:J:gamma:small}
with
$$c_{\gamma, s}:=\frac{\int_{\partial\rnp} |x'|^2(\partial_1 U)^2\, dx'} {2(n-1)\lambda \int_{\rnp}\frac{|U|^{\crits}}{|x|^s}\, dx}>0.$$

\noindent{\bf Claim 2:} If $\gamma=\frac{n^2-1}{4}$, then we have 
\begin{equation}\label{asymp:J:gamma:crit}
J(u_\epsilon)=\mu_{\gamma,s}(\rnp)\left(1+c'_{\gamma,s}\cdot H_\Omega(0)\cdot \eps\ln\frac{1}{\eps} +o(\eps\ln\frac{1}{\eps})\right)\hbox{ when }\eps\to 0.
\end{equation}
where $c'(\gamma,s)$ is a positive constant.

 Indeed, it follows from \eqref{asymp:U} 
that
$$\lim_{x\to +\infty}|x'|^{\aps}|\partial_1 U(0,x')|=K_2>0.$$
Since $2\aps-2=n-1$, we get that
$$\int_{\partial\rnp\cap \tilde{B}_{\eps^{-1}\delta}} |x'|^2(\partial_1 U)^2\, dx'=\omega_{n-1}K_2^2\ln\frac{1}{\eps}+o\left(\ln\frac{1}{\eps}\right)\quad \hbox{as $\eps\to 0$.}$$
Therefore, \eqref{dev:J:2} yields \eqref{asymp:J:gamma:crit} with
$$c'_{\gamma,s}:=\frac{\omega_{n-1}K_2^2 }{2(n-1)\lambda \int_{\rnp}\frac{|U|^{\crits}}{|x|^s}\, dx}>0.$$

Now we consider the case when $\frac{n^2-1}{4} <\gamma <\frac{n^2}{4}$. One starts by considering $H\in C^2(\Omega)$ as in Proposition \ref{def:mass:intro} such that 
\begin{equation}\label{exp:H}
\hbox{$H(x)=\frac{d(x,\partial\Omega)}{|x|^{\aps}}+b_\gamma(\Omega)\frac{d(x,\partial\Omega)}{|x|^{\ams}}+ o\left(\frac{d(x,\partial\Omega)}{|x|^{\ams}}\right)$ \quad when $x\to 0$. }
\end{equation}
As above, fix $\eta\in C^\infty_c(\rn)$ such that $\eta(x)=1$ for all $x\in \tilde{B}_\delta$, $\eta(x)=0$ for all $x\not\in \tilde{B}_{2\delta}$. Define $\beta$ such that
$$H(x)=\left(\eta\frac{x_1}{|x|^{\aps}}\right)\circ \varphi^{-1}(x)+\beta(x)\quad \hbox{for all $x\in \Omega$. }$$
An essential point underlying the analysis of this case is that since $\aps-\ams<1$, we have
$$\hbox{$|x|=o\left(|x|^{\aps-\ams}\right)$\quad as $x\to 0$. }$$
This implies for example that $\beta\in H^1_0(\Omega)$ and that
\begin{equation}\label{asymp:beta}
\hbox{$\beta(x)=b_\gamma(\Omega)\frac{d(x,\partial\Omega)}{|x|^{\ams}}+ o\left(\frac{d(x,\partial\Omega)}{|x|^{\ams}}\right)$\quad as $x\to 0$. }
\end{equation}
Choose again $U$ as in \eqref{eq:U}. Up to multiplication by a constant, we can assume that 
\begin{equation}\label{asymp:U:bis}
U(x)\sim_{x\to 0}K_1\frac{x_1}{|x|^{\ams}}\hbox{ and }U(x)\sim_{|x|\to +\infty}\frac{x_1}{|x|^{\aps}}.
\end{equation}
The test-functions that one need to analyze here are defined as:
\begin{equation}\label{def:ue:mass}
v_\epsilon(x):=\left(\eta\epsilon^{-\frac{n-2}{2}}U(\epsilon^{-1}\cdot)\right)\circ \varphi^{-1}(x)+\epsilon^{\frac{\aps-\ams}{2}}\beta(x)\hbox{ for }x\in\Omega\hbox{ and }\epsilon>0.
\end{equation}
Note that for any $k\geq 0$, we have
\begin{equation}\label{lim:ue:H}
\lim_{\eps\to 0}\frac{v_\eps}{\eps^{\frac{\aps-\ams}{2}}}=H\hbox{ in }C^k_{loc}(\overline{\Omega}\setminus\{0\}).
\end{equation}
The ultimate goal is to establish the following expansion as $\eps\to 0$.\\

\noindent{\bf Claim 3:} If $\frac{n^2-1}{4} <\gamma <\frac{n^2}{4}$, then we have 
\begin{equation}\label{asymp:J:gamma:crit.bis}
J(u_\epsilon)=\mu_{\gamma,s}(\rnp)\left(1-c_{\gamma,s}''b_\gamma(\Omega)\eps^{\aps-\ams}+o\left(\eps^{\aps-\ams}\right)\right) \, \hbox{as $\eps\to 0$,}
\end{equation}
where 
$$
c''_{\gamma,s}:=\frac{\left(\aps-\frac{n}{2}\right)\omega_{n-1}}{n\lambda \int_{\rnp}\frac{U^{\crits}}{|x|^s}\, dx}>0.
$$ 
\end{proof}

\section{The remaining $3$-dimensional cases}

It is easy to see that if $s=0$ and $\gamma\leq 0$, then $\mu_{\gamma,s}(\Omega)=\mu_{0,0}(\rn)$
and there is no extremal for $\mu_{\gamma,s}(\Omega)$. So the remaining case is when  $n=3$, $s=0$ and $\gamma>0$. 
But, we have seen that in this case, there may or may not be extremals for $\mu_{\gamma, 0}(\rnp)$. If they do exist, we can then argue as before --using the same test functions-- to conclude that there are extremals under the same conditions, that is if either $\gamma\leq\frac{n^2-1}{4}$ and the mean curvature of $\partial \Omega$ at $0$ is negative, or $\gamma>\frac{n^2-1}{4}$ and the Hardy-singular boundary mass $b_\gamma(\Omega)$ is positive. 

However, if no extremal exist for $\mu_{\gamma, 0}(\rnp)$, then we have seen in section 5, that
\begin{equation}
\mu_{\gamma, 0}(\rnp)= \inf_{u\in D^{1,2}(\rn)\setminus\{0\}}\frac{\int_{\rn}|\nabla u|^2\, dx}{\left(\int_{\rn}|u|^{\crit}\, dx\right)^{\frac{2}{\crit}}},
\end{equation}
and therefore we are back to the case where the boundary singularity does not contribute anything. This means that 
one needs to resort to the standard notion of mass $R_{\gamma, 0}(\Omega, x_0)$ for a domain $\Omega$ associated to an interior point $x_0\in \Omega$ and construct test-functions in the spirit of Schoen.

\begin{theorem}  Let $\Omega$ be a bounded smooth domain of $\rr^3$ such that $0\in \partial \Omega$, in such a way that $\frac{1}{4}<\gamma_H(\Omega) \leq \frac{9}{4}$.
\begin{enumerate}
\item If $\gamma_H(\Omega)\leq \gamma <\frac{9}{4}$, then there are extremals for $\mu_{\gamma,0}(\Omega)$.
\item If $0<\gamma <\gamma_H(\Omega)$, 
and if there exists $x_0\in\Omega$ such that $R_{\gamma, 0}(\Omega, x_0)>0$, then there are  extremals for  $\mu_{\gamma,0}(\Omega)$, under either one of the following conditions:
\begin{enumerate}
\item  $\gamma\leq 2$ and the mean curvature of $\partial \Omega$ at $0$ is negative.
\item  $\gamma>2$ and the boundary mass $b_\gamma(\Omega)$ is positive.
\end{enumerate}  
\end{enumerate}
\end{theorem}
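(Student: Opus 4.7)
The plan is, in both parts, to verify the sufficient condition $\mu_{\gamma,0}(\Omega) < \mu_{\gamma,0}(\rnp)$, so that existence of extremals follows from Theorem~\ref{tool}. Throughout $n=3$, $s=0$, and $2^{\star}=6$.

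For Part~(1), fix $\gamma\in[\gamma_H(\Omega),9/4)$. Since $\gamma_H(\Omega)<9/4=n^2/4$, Theorem~\ref{tool}(1) produces a nontrivial $\varphi\in H^1_0(\Omega)$ attaining $\gamma_H(\Omega)$. If $\gamma=\gamma_H(\Omega)$, the numerator of $J^\Omega_{\gamma,0}(\varphi)$ vanishes, so $\mu_{\gamma,0}(\Omega)=0$ is attained by $\varphi$ itself. If $\gamma>\gamma_H(\Omega)$, then $J^\Omega_{\gamma,0}(\varphi)<0$, whence $\mu_{\gamma,0}(\Omega)<0\le\mu_{\gamma,0}(\rnp)$ (the latter since $\gamma<9/4=\gamma_H(\rnp)$), and Theorem~\ref{tool} applies.

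For Part~(2), fix $0<\gamma<\gamma_H(\Omega)$ and $x_0\in\Omega$ with $R_{\gamma,0}(\Omega,x_0)>0$. I would split according to whether $\mu_{\gamma,0}(\rnp)$ is attained, a question the remark following Theorem~\ref{th:ext:cone} leaves open in this regime. \emph{Affirmative case:} a positive minimizer $U\in D^{1,2}(\rnp)$ with the symmetry and asymptotics of Theorem~\ref{th:sym} is available, and the proof of Theorem~\ref{main} carries over verbatim---the corresponding test functions produce the expansions \eqref{asymp:J:gamma:small} (for $\gamma<2$), \eqref{asymp:J:gamma:crit} (for $\gamma=2=(n^2-1)/4$), or \eqref{asymp:J:gamma:crit.bis} (for $\gamma>2$), so the hypothesis $H_\Omega(0)<0$ or $b_\gamma(\Omega)>0$ already supplies the gap (the Robin mass condition is unused here). \emph{Negative case:} Theorem~\ref{th:ext:cone} forces $\mu_{\gamma,0}(\rnp)=1/K(3,2)^2$, and the boundary singularity at $0$ becomes invisible at leading order; I would then mimic Claim~3 of Section~7. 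Taking $G_{x_0}\in C^\infty(\overline{\Omega}\setminus\{x_0\})$ to be the Green function of $L_\gamma$ at $x_0$ (available because $L_\gamma$ is coercive on $H^1_0(\Omega)$), decomposing $G_{x_0}=\omega_2^{-1}\bigl(\eta(x)|x-x_0|^{-1}+\beta(x)\bigr)$ with $\beta\in H^1_0(\Omega)\cap C^0(\Omega)$ and $\beta(x_0)=R_{\gamma,0}(\Omega,x_0)$, and setting
\begin{equation*}
u_\epsilon(x):=\eta(x)\left(\frac{\epsilon}{\epsilon^2+|x-x_0|^2}\right)^{1/2}+\epsilon^{1/2}\beta(x),
\end{equation*}
one expects the expansion
\begin{equation*}
J^\Omega_{\gamma,0}(u_\epsilon)=\frac{1}{K(3,2)^2}-c\,R_{\gamma,0}(\Omega,x_0)\,\epsilon+o(\epsilon) \quad \text{as } \epsilon\to 0,
\end{equation*}
for an explicit $c>0$. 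Positivity of the mass then closes the gap and Theorem~\ref{tool} finishes the argument.

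The main technical obstacle is this $\epsilon$-expansion in the negative case. Although $u_\epsilon$ concentrates at the interior point $x_0\neq 0$, the Hardy term $\gamma/|x|^2$ remains globally present in $L_\gamma$, so $G_{x_0}$, its regular part $\beta$, and the mass $R_{\gamma,0}(\Omega,x_0)$ all carry a nontrivial $\gamma$-dependence. Running the Pohozaev-style integration by parts of Section~7 in this setting requires checking: (i) that the $L^2$-integrability near $x_0$ of the bubble, which is what makes dimension $3$ the decisive case, persists once the smooth factor $\gamma/|x_0|^2+O(|x-x_0|)$ is inserted; (ii) that the singular behaviour of $\beta$ near the boundary point $0$, governed by $L_\gamma$ rather than by $-\Delta$, does not corrupt the $O(\epsilon)$ coefficient; and (iii) that no cancellation occurs, so that the coefficient is indeed $-c\,R_{\gamma,0}(\Omega,x_0)$ with $c>0$.
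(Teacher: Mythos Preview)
Your proposal is correct and follows essentially the same route as the paper's own (sketched) argument in Section~13: for Part~(1) you reduce to $\mu_{\gamma,0}(\Omega)\le 0<\mu_{\gamma,0}(\rnp)$, and for Part~(2) you split on whether $\mu_{\gamma,0}(\rnp)$ admits an extremal, invoking the boundary test functions of Theorem~\ref{main} in the affirmative case and the Schoen-type interior test functions built from the Robin function in the negative case. Your identification of the technical points (i)--(iii) that need checking in the negative case is accurate, and the paper does not carry them out in more detail than you do.
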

\section{Examples of domains with positive mass}\label{sec:ex:mass}
We now assume that $\gamma\in (\frac{n^2-1}{4},\frac{n^2}{4})$ and would like to construct domains with either negative or positive mass. Since $\rnp$ is the main reference set in this theory, one needs to define a notion of mass for certain unbounded sets that include $\rnp$. For that, define 
the following Kelvin transformation.
For any $x_0\in \rn$, let
\begin{equation}
i_{x_0}(x):=x_0+|x_0|^2\frac{x-x_0}{|x-x_0|^2} \quad \hbox{for all $x\in\rn\setminus\{x_0\}$.}
\end{equation}
 The inversion $i_{x_0}$ is clearly the identity map on  $\partial B_{|x_0|}(x_0)$ (the ball of center $x_0$ and of radius $|x_0|$), and in particular $i_{x_0}(0)=0$.

\begin{defi} Say that a domain $\Omega\subset\rn$ ($0\in\partial\Omega$) is {\it conformally bounded} if there exists $x_0\not\in \overline{\Omega}$ such that $i_{x_0}(\Omega)$  is a smooth bounded domain of $\rn$ having both $0$ and $x_0$ on its boundary $ \partial(i_{x_0}(\Omega))$. 
\end{defi}
 
 The following proposition shows that the notion of mass  
 extends to unbounded domains that are conformally bounded. 
 \begin{proposition}\label{prop:pert:mass} Let $\Omega$ be a conformally bounded domain in $\rn$   such that $0\in\partial\Omega$. Assume that $\gamma_H(\Omega)>\frac{n^2-1}{4}$ and that $\gamma\in \left(\frac{n^2-1}{4}, \gamma_H(\Omega)\right)$. Then, up to a multiplicative constant, there exists a unique function $H\in C^2(\overline{\Omega}\setminus\{0\})$ such that
\begin{equation}\label{def:H:2}
\left\{\begin{array}{ll}
-\Delta H-\frac{\gamma}{|x|^2}H=0&\hbox{\rm in }\Omega\\
\hfill H>0&\hbox{\rm  in }\Omega\\
\hfill H=0&\hbox{\rm on }\partial\Omega\setminus\{0\}\\
\hfill H(x)\leq C|x|^{1-\ap}&\hbox{\rm for }x\in\Omega.
\end{array}\right.
\end{equation}
Moreover, there exists $c_1>0$ and $c_2\in\rr$ such that
$$H(x)=c_1\frac{d(x,\partial\Omega)}{|x|^{\ap}}+c_2\frac{d(x,\partial\Omega)}{|x|^{\am}}+o\left(\frac{d(x,\partial\Omega)}{|x|^{\am}}\right) \quad \hbox{as $x\to 0$. }$$
We define the mass $b_\gamma(\Omega):=\frac{c_2}{c_1}$, which is independent of the choice of $H$ in \eqref{def:H:2}.
\end{proposition}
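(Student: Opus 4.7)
The plan is to transport the problem to the bounded smooth domain $\tilde\Omega := i_{x_0}(\Omega)$ via the Kelvin inversion $\varphi := i_{x_0}$, apply Theorem \ref{def:mass:intro} (or a close variant of it) on $\tilde\Omega$, and pull the resulting function back to $\Omega$. Since $\varphi(0)=0$ and $\varphi$ sends infinity to $x_0\in\partial\tilde\Omega$, this inversion simultaneously compactifies $\Omega$ and converts the required decay at infinity into an ordinary boundary condition near the finite point $x_0$.

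First I would record the transformation law. For $\tilde u$ on $\tilde\Omega$, set $u(x):=(|x_0|/|x-x_0|)^{n-2}\tilde u(\varphi(x))$. Using the classical Kelvin identity for the Laplacian together with $|\varphi(x)|=|x_0||x|/|x-x_0|$, a direct computation yields
\[
L_\gamma u(x)=\left(\frac{|x_0|}{|x-x_0|}\right)^{n+2}\tilde L\tilde u(\varphi(x)),\qquad \tilde L:=-\Delta-\frac{\gamma |x_0|^2}{|y|^2|y-x_0|^2}.
\]
Near $y=0$, $\tilde L$ is a perturbation of $L_\gamma$ by a potential of order $|y|^{-1}$, i.e., of the form $(\gamma+O(|y|^\tau))/|y|^2$ with $\tau=1$, to which the classification in Theorem \ref{Hopf} applies. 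Moreover, the same Kelvin change of variables gives $\int_\Omega u^2/|x|^2\,dx=|x_0|^2\int_{\tilde\Omega}\tilde u^2/(|y|^2|y-x_0|^2)\,dy$ and preserves the Dirichlet integral, so the coercivity hypothesis $\gamma<\gamma_H(\Omega)$ is equivalent to coercivity of $\tilde L$ on $D^{1,2}(\tilde\Omega)$. I would then invoke the bounded-domain construction behind Theorem \ref{def:mass:intro} for $\tilde L$ on $\tilde\Omega$ to produce a positive function $\tilde H\in C^2(\overline{\tilde\Omega}\setminus\{0\})$ with $\tilde L\tilde H=0$, $\tilde H=0$ on $\partial\tilde\Omega\setminus\{0\}$, unique up to positive multiples, admitting the two-term expansion $\tilde H(y)=\tilde c_1 d(y,\partial\tilde\Omega)/|y|^{\alpha_+}+\tilde c_2 d(y,\partial\tilde\Omega)/|y|^{\alpha_-}+o(d/|y|^{\alpha_-})$ as $y\to 0$.

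Setting $H(x):=(|x_0|/|x-x_0|)^{n-2}\tilde H(\varphi(x))$ then gives $L_\gamma H=0$ on $\Omega$ by the transformation formula, while positivity and the boundary condition on $\partial\Omega\setminus\{0\}$ are inherited from $\tilde H$. The decay $H(x)\leq C|x|^{1-\alpha_+}$ follows from the Hopf-type bound $\tilde H(y)=O(|y-x_0|)$ near $x_0$, which yields $H(x)=O(|x|^{1-n})$, together with the algebraic fact $\alpha_+<n-1$ valid in the range $\gamma>(n^2-1)/4$, $n\geq 3$ (since $(n^2-1)/4\geq n-1$ for $n\geq 3$). For the expansion at $0$, the differential $d\varphi(0)$ is a Euclidean reflection, so $|\varphi(x)|=|x|(1+O(|x|))$ and $d(\varphi(x),\partial\tilde\Omega)=d(x,\partial\Omega)(1+O(|x|))$, while $(|x_0|/|x-x_0|)^{n-2}=1+O(|x|)$. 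Substituting into the expansion of $\tilde H$ produces remainder terms of order $O(d/|x|^{\alpha_\pm-1})$, which are absorbed in $o(d/|x|^{\alpha_-})$ precisely because $\alpha_+-\alpha_-<1$ in our regime, so the coefficients carry over: $c_1=\tilde c_1$, $c_2=\tilde c_2$.

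Uniqueness up to a positive multiple follows from a short coercivity argument. If $H_1,H_2$ both satisfy the conclusions with $c_1^{(i)}=1$, then $u:=H_1-H_2$ is a weak solution of $L_\gamma u=0$ on $\Omega$, vanishes on $\partial\Omega\setminus\{0\}$, behaves like $(c_2^{(1)}-c_2^{(2)})d/|x|^{\alpha_-}$ at $0$, and is $O(|x|^{1-\alpha_+})$ at infinity; the exponents $\alpha_-<n/2<\alpha_+$ guarantee that $u\in D^{1,2}(\Omega)$, so the coercivity of $L_\gamma$ coming from $\gamma<\gamma_H(\Omega)$ forces $u\equiv 0$ and hence $c_2^{(1)}=c_2^{(2)}$. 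The main obstacle is transferring Theorem \ref{def:mass:intro} from $L_\gamma$ to $\tilde L$: although the classification of Theorem \ref{Hopf} already accommodates an $O(|y|^{-1})$ perturbation at $0$, isolating the second coefficient $\tilde c_2$ in that setting requires revisiting the construction of $\tilde H$ carefully, or equivalently one can sidestep it by exhausting $\Omega$ by bounded smooth subdomains $\Omega_R$ with the same geometry near $0$, applying Theorem \ref{def:mass:intro} directly on each $\Omega_R$, and passing to the limit in the normalized mass functions.
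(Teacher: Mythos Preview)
The paper does not actually prove this proposition; it is stated in Section~14 as a result imported from \cite{gr4}, with no argument given beyond the definition of conformally bounded domains and the remark that $b_\gamma(\rnp)=0$. So there is no ``paper's own proof'' to compare against directly.

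That said, your approach is precisely the one the definition is engineered for: the Kelvin inversion $i_{x_0}$ is introduced in the paper exactly so that $\tilde\Omega=i_{x_0}(\Omega)$ becomes a smooth bounded domain with $0\in\partial\tilde\Omega$, to which the bounded-domain theory (Theorem~\ref{def:mass:intro}) applies. Your computation of the transformed operator $\tilde L=-\Delta-\gamma|x_0|^2/(|y|^2|y-x_0|^2)$ is correct, and you are right that near $y=0$ this is of the form covered by Theorem~\ref{Hopf} with $\tau=1$. The decay argument at infinity via the Hopf bound at $x_0$ and the inequality $\alpha_+<n-1$ (valid because $\gamma>(n^2-1)/4\geq n-1$ for $n\geq 3$) is clean, as is the uniqueness argument by coercivity once you know the difference lies in $D^{1,2}(\Omega)$.

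You have also correctly flagged the one genuine issue: Theorem~\ref{def:mass:intro} as stated in this paper is for the exact operator $L_\gamma$, not for $\tilde L$, so extracting the \emph{second} coefficient $\tilde c_2$ requires either reproving the two-term expansion for operators with an $O(|y|^\tau)$ perturbation of the Hardy potential, or your alternative route via exhaustion by bounded subdomains. Either works; the first is what is done in \cite{gr4}, and is consistent with the fact that Theorem~\ref{Hopf} (the Hopf lemma) is already stated in the paper for perturbed potentials $(\gamma+O(|x|^\tau))/|x|^2$, which strongly suggests the full mass construction is carried out at that level of generality in the source paper.
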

\noindent One can easily check that $\rnp$ is a conformally bounded domain   (take $x_0:=(-1,0,\dots,0)$), and the results of section 10 indicate that $b_\gamma(\rnp)=0$. Since the Hardy b-mass is strictly increasing and continuous, it follows that 
 the mass is negative whenever $\Omega\subset \rnp=T_0\partial\Omega$. In particular, $b_\gamma(\Omega)<0$ if $\Omega$ is convex and $\frac{n^2-1}{4}<\gamma<\frac{n^2}{4}$. 
 
This also suggests that a conformally bounded set strictly containing $\rnp$ must have positive mass, which was proved by Ghoussoub-Robert \cite{gr4}.  

\begin{proposition}\label{prop:ex:pos:unbounded} Let $\Omega$ be a conformally bounded domain such that $0\in\partial\Omega$. Assume that $\gamma_H(\Omega)>\frac{n^2-1}{4}$ and fix $\gamma\in \left(\frac{n^2-1}{4},\gamma_H(\Omega)\right)$. Then $b_\gamma(\Omega)>0$ if $\rnp\subsetneq \Omega$, and $b_\gamma(\Omega)<0$ if $\Omega\subsetneq \rnp$.
\end{proposition}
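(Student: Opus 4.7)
My approach is to compare $H_\Omega$ with the explicit canonical solution on the half-space, $\bar H(x):=x_1|x|^{-\ap}$, which satisfies $L_\gamma\bar H=0$ in $\rnp$, vanishes on $\partial\rnp$, and whose expansion at $0$ has $c_1=1$, $c_2=0$, so that $b_\gamma(\rnp)=0$. Since $H_\Omega$ is determined up to a positive constant and $b_\gamma=c_2/c_1$ is scale-invariant, I normalize $c_1=1$. I will apply Green's identity to the pair $(\bar H,H_\Omega)$ on a suitable truncated domain (depending on whether $\Omega\subset\rnp$ or $\rnp\subset\Omega$), with a small ball around $0$ removed, and extract from the integral on $\partial B_\eps$ a term proportional to $b_\gamma(\Omega)$; comparing with the remaining boundary integral, whose sign is fixed by the strict inclusion, will determine the sign of $b_\gamma(\Omega)$.

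\medskip\noindent\textbf{Case $\Omega\subsetneq\rnp$.} Both $\bar H$ and $H_\Omega$ solve $L_\gamma=0$ on $\Omega$. Green's identity on $D_\eps:=\Omega\setminus\overline{B_\eps(0)}$, combined with $H_\Omega\equiv 0$ on $\partial\Omega\setminus\{0\}$, yields
\begin{equation*}
\int_{\partial\Omega\setminus B_\eps}(-\bar H\,\partial_{\nu}H_\Omega)\,d\sigma+\int_{\Omega\cap\partial B_\eps}\bigl(\bar H\,\partial_r H_\Omega-H_\Omega\,\partial_r\bar H\bigr)\,d\sigma=0.
\end{equation*}
The first integrand is nonnegative and strictly positive on the nonempty set $\partial\Omega\cap\rnp$, since $\partial_{\nu}H_\Omega<0$ by Hopf's lemma and $\bar H>0$ for $x_1>0$. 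For the second, I substitute the asymptotic expansion of $H_\Omega$ from Theorem~\ref{def:mass:intro}, use the tangency estimate $d(x,\partial\Omega)-x_1=O(|x|^2)$, and the identity $\ap+\am=n$, to obtain
\begin{equation*}
\int_{\Omega\cap\partial B_\eps}\bigl(\bar H\,\partial_r H_\Omega-H_\Omega\,\partial_r\bar H\bigr)\,d\sigma\;\longrightarrow\;b_\gamma(\Omega)(\ap-\am)\int_{S^{n-1}_+}\omega_1^2\,d\omega\quad\text{as }\eps\to 0.
\end{equation*}
Controlling the error requires $\gamma>\tfrac{n^2-1}{4}$, i.e.\ $\ap-\am<1$, which ensures that the boundary-discrepancy contribution of order $|x|^{2-\ap}$ is dominated by the mass term of order $|x|^{1-\am}$ as $x\to 0$. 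Passing to the limit yields a strictly positive quantity plus a positive multiple of $b_\gamma(\Omega)$ equal to zero, forcing $b_\gamma(\Omega)<0$.

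\medskip\noindent\textbf{Case $\rnp\subsetneq\Omega$.} Now I apply Green's identity on $\rnp\setminus\overline{B_\eps(0)}$, intersected with $B_R$ if $\Omega$ is unbounded. The roles swap: $\bar H\equiv 0$ on $\partial\rnp$ while $H_\Omega>0$ on $\partial\rnp\setminus\{0\}$, since the latter lies in the interior of $\Omega$. Using $\partial_1\bar H|_{x_1=0}=|x'|^{-\ap}$, the boundary integral on $\partial\rnp$ equals $-\int H_\Omega(0,x')|x'|^{-\ap}\,dx'<0$, while the $\partial B_\eps$ integral converges again to $C\,b_\gamma(\Omega)$ with $C=(\ap-\am)\int_{S^{n-1}_+}\omega_1^2\,d\omega>0$. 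The identity then forces $b_\gamma(\Omega)>0$.

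\medskip\noindent\textbf{Main obstacles.} The technical heart of the argument is twofold: (i) substituting the asymptotic expansion of $H_\Omega$ in the $\partial B_\eps$ integral and showing the error is $o(1)$, which is made possible precisely by $\ap-\am<1$ so that the mass term dominates the discrepancy between the two distance functions $d(x,\partial\Omega)$ and $x_1$; and (ii) justifying the vanishing of the boundary integral on $\partial B_R$ when $\Omega$ is unbounded, where the hypothesis of conformal boundedness enters through the decay bound $H(x)\leq C|x|^{1-\ap}$ built into the characterization of $H$.
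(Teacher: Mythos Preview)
Your Green's identity approach is correct in outline, and the sign bookkeeping checks out: in Case~1 the strictly positive boundary term on $\partial\Omega\cap\rnp$ forces $b_\gamma(\Omega)<0$, and in Case~2 the strictly positive contribution $\int_{\partial\rnp}H_\Omega(0,x')|x'|^{-\ap}\,dx'$ forces $b_\gamma(\Omega)>0$; the condition $\gamma>\frac{n^2-1}{4}$ is precisely what makes the integrals near $0$ converge and the error terms subordinate.

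The paper itself does not carry out this computation. It proceeds at a higher level: it records (as a general property of the Hardy-singular boundary mass, proved in the companion paper~\cite{gr4}) that $\Omega\mapsto b_\gamma(\Omega)$ is \emph{strictly increasing} under inclusion, and observes from the classification in Section~10 that $b_\gamma(\rnp)=0$. The proposition is then immediate. Your argument is essentially a direct proof of that monotonicity in the special case where one of the two domains is $\rnp$ and the explicit solution $\bar H=x_1|x|^{-\ap}$ is available; the general monotonicity is proved by the same Green's identity device with $\bar H$ replaced by $H_{\Omega'}$ on the larger domain. So the two routes are the same computation packaged differently: the paper isolates monotonicity as a reusable black box, while you compute the specific instance by hand.

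Two technical points deserve more attention in your write-up. First, the $\partial B_\eps$ limit requires the asymptotics of $\partial_r H_\Omega$, not just of $H_\Omega$; the expansion in Proposition~\ref{prop:pert:mass} (this is the relevant statement for conformally bounded domains, not Theorem~\ref{def:mass:intro}) is stated only pointwise, and one must invoke interior elliptic estimates on annuli to upgrade the $o(d(x,\partial\Omega)|x|^{-\am})$ remainder to $C^1$. Second, in Case~1 with $\Omega$ unbounded you also need a $\partial B_R$ truncation; the decay $H_\Omega\leq C|x|^{1-\ap}$ together with the matching gradient bound gives an $O(R^{n-2\ap})\to 0$ contribution, and the same decay guarantees that $\int_{\partial\Omega}\bar H\,\partial_\nu H_\Omega$ is absolutely convergent at infinity.
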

Note that if the set is not too far from $\rnp$, then it must have a Hardy constant between $\frac{n^2-1}{4}$ and $\frac{n^2}{4}$. The construction of such domains is technical but straightforward. Theorem \ref{prop:ex:any:behave} illustrates that one can construct smooth bounded domains with either positive or negative mass and having any type of behavior at $0$.

\end{document}